\def\tank#1{\protected@xdef\@thanks{\@thanks
 \protect\footnotetext[0]{#1}}}
\def\bigfoot{

 \@footnotetext}
\newcommand{\ea}{\end{array}}
\newtheorem{theorem}{Theorem}[section]
\newtheorem{lem}{Lemma}[section]
\newtheorem{prp}[theorem]{Proposition}
\newtheorem{thm}[theorem]{Theorem}
\newtheorem{cor}[theorem]{Corollary}
\newtheorem{remark}{Remark}
\newtheorem{con}{Condition}[section]
\def\beq{\begin{equation}}
\def\nneq{\end{equation}}
\def\bthm{\begin{thm}}
\def\nthm{\end{thm}}
\def\blem{\begin{lem}}
\def\nlem{\end{lem}}
\def\bprf{\begin{proof}}
\def\bprop{\begin{prop}}
\def\nprop{\end{prop}}
\def\brmk{\begin{rem}}
\def\nrmk{\end{rem}}
\def\bexa{\begin{exa}}
\def\nexa{\end{exa}}
\def\bcor{\begin{cor}}
\def\ncor{\end{cor}}
\def\e{\varepsilon}
\title{Large deviations for stochastic models of two-dimensional second grade fluids driven by L\'evy noise
}
\thanks{Tusheng.Zhang@manchester.ac.uk},\ \ Wuting Zheng$^{1,}$\thanks{zwtzjr@mail.ustc.edu.cn}\\
\date{}
\newenvironment{proof}{\par\noindent{\bf Proof:}}{\hspace*{\fill}$\blacksquare$\par}
\begin{document}
\maketitle

\noindent \textbf{Abstract:}
In this paper, we establish a large deviation principle for stochastic models of two-dimensional second grade fluids driven by L\'evy noise. The weak convergence method introduced by Budhiraja, Dupuis and Maroulas in
 \cite{BDM} plays a key role.

\vspace{4mm}

\noindent \textbf{Key Words:}
Large deviations;
Second grade fluids;
L\'evy process;
Weak convergence method.
\numberwithin{equation}{section}
\section{Introduction}

The second grade fluids is an admissible
model of slow flow fluids,  which contains industrial fluids, slurries, polymer melts, etc.. It has attracted much attention
from a theoretical point of view, since it has properties of boundedness, stability and exponential decay, and has interesting connections with many other fluid models, see e.g. \cite{2003-Busuioc-p1119-1119}, \cite{1974-Dunn-p191-252}, \cite{1979-Fosdick-p145-152}, \cite{2001-Shkoller-p539-543} and references therein.
\vskip 0.2cm

Recently, taking into account random enviroment, the stochastic models of two-dimensional second grade fluids have been studied. For the case of
Wiener noises, we refer to \cite{CC,RS-10-01,RS-10,RS-12,WZZ,ZZ2}, where the authors obtained the existence and uniqueness of solutions,
 the behavior of the solutions as $\alpha\rightarrow0$,  Freidlin-Wentzell's large deviation principles and exponential mixing for the solutions. In the case
 of L\'evy noises, the global existence of a martingale solution was obtained in \cite{HRS}, and the existence and uniqueness of strong probabilistic solutions is studied in \cite{SZZ}.

\vskip 0.2cm
Based on the results in \cite{SZZ}, in this paper, we are concerned with Freidlin-Wentzell's large deviation principle of stochastic models for the incompressible second grade
fluids driven by L\'evy noises, which are given as follows:

\begin{align}\label{1.a}
\left\{
\begin{aligned}
& d(X^\e(t)-\alpha \Delta X^\e(t))+ \Big(-\kappa \Delta X^\e(t)+{\rm curl}(X^\e(t)-\alpha \Delta X^\e(t))\times X^\e(t)+\nabla\mathfrak{P}\Big)\,dt \\
& = F(X^\e(t),t)\,dt+\sqrt{\e}G(X^\e(t),t)\,dW(t) +\e\int_{\mathbb{Z}} \sigma(t,X^\e(t-),z)\widetilde{N}^{\e^{-1}}(dzdt),\quad \rm{ in }\ \mathcal{O}\times(0,T], \\
&\begin{aligned}
& {\rm{div}}\,X^\e=0 \quad &&\rm{in}\ \mathcal{O}\times(0,T]; \\
& X^\e=0  &&\rm{in}\ \partial \mathcal{O}\times[0,T]; \\
& X^\e(0)=X_0  &&\rm{in}\ \mathcal{O},
&\end{aligned}
\end{aligned}
\right.
\end{align}

\noindent where $\mathcal{O}$ is an open domain of $\mathbb{R}^2$; $X^\e=(X^\e_1,X^\e_2)$ and $\mathfrak{P}$ represent the random velocity and modified pressure respectively. $\mathbb{Z}$ is
 a locally compact Polish space. On a specified complete filtered probability space $(\Omega,\mathcal{F},\{\mathcal{F}_t\}_{t\in[0,T]},P)$, $W$ is a one-dimensional standard Brownian motion, and
$\widetilde{N}^{\epsilon^{-1}}$ is a compensated Poisson random measure on $[0,T]\times\mathbb{Z}$ with a
$\sigma$-finite
mean measure $\epsilon^{-1}\lambda_T\otimes \nu$, $\lambda_T$ is the Lebesgue measure on $[0,T]$ and $\nu$ is a $\sigma$-finite measure on $\mathbb{Z}$. The details of $(\Omega,\mathcal{F},\{\mathcal{F}_t\}_{t\in[0,T]},P, W, \widetilde{N}^{\epsilon^{-1}})$ will be given in Section 2.
\vskip 0.2cm

Due to the appearance of jumps in our setting, the Freidlin-Wentzell's large deviations are distinctively different to the Wiener case in \cite{ZZ2}.
We will apply the weak convergence approach introduced in \cite{BDM} and \cite{BCD} for the case of Poisson random measures.
This approach is mainly based on a variational representation formula for continuous time processes, and has been proved to be
 a powerful tool to establish the Freidlin-Wentzell large deviation principle for various
finite and infinite dimensional stochastic dynamical systems with irregular coefficients
driven by a non-Gaussian L\'evy noise, see for example \cite{Bao-Yuan}, \cite{BCD}, \cite{YZZ}, \cite{ZZ1}, \cite{Dong-Xiong-Zhai-Zhang}, \cite{Xiong-Zhai}. Because of the nature of the second grade fluids, technical difficulties arise even in the deterministic case. In addition to the complex structure of the system (\ref{1.a}), the nature of the nonlinear term ${\rm curl}\big(X^\e(t)-\alpha \Delta X^\e(t)\big)\times X^\e(t)dt$ implies that the solution $X^\e(t)$ should be in $H^3(\mathcal{O})\times H^3(\mathcal{O})$. The chain rule or $\rm{It\hat{o}'s}$ Formula of (\ref{1.a}) shows that the linear term $-\kappa \Delta X^\e(t)dt$ is not acting as a smoothing term like many nonlinear evolutions such as the Navier-Stokes equations. On the other hand, when applying the weak convergence method to the system (\ref{1.a}), it will be proved that the solutions of the controlled stochastic evolution equations (\ref{LDP Girsanov eq}), denoted by $\widetilde{X}^\e,\e>0$, have the following priori estimate: 
$$\sup_{0<\e<\e_0}E\Big[\sup_{t\in[0,T]}\|\widetilde{X}^\e(t)\|_{H^3(\mathcal{O})\times H^3(\mathcal{O})}^{2}\Big]<\infty.$$
This is non-trival, see Lemma \ref{LDP Lemma Estimates 1}.

\vskip 0.2cm

%
%

We organize this paper as follows. In Section 2, we introduce some functional spaces and $(\Omega,\mathcal{F},\{\mathcal{F}_t\}_{t\in[0,T]},\\
P, W, \widetilde{N}^{\epsilon^{-1}})$ appeared in the system (\ref{1.a}). In Section 3, we formulate the hypotheses. In section 4, we establish the large deviation principle.

\section{Notations and Preliminaries}

In this section, we first introduce some functional spaces and preliminaries that are needed in the paper, and then specify $(\Omega,\mathcal{F},\{\mathcal{F}_t\}_{t\in[0,T]},P, W, \widetilde{N}^{\epsilon^{-1}})$ in the system (\ref{1.a}).

\vskip 0.2cm

In this paper, we assume that $\mathcal{O}$ is a simply connected and bounded open domain of $\mathbb{R}^2$ with boundary $\partial \mathcal{O}$ of class $\mathcal{C}^{3,1}$. For $p\geq 1$ and $k\in\mathbb{N}$, we denote by $L^p(\mathcal{O})$
and $W^{k,2}(\mathcal{O})$ the usual $L^p$ and Sobolev spaces over $\mathcal{O}$ respectively.
Let $W^{k,2}_0(\mathcal{O})$ be
the closure in $W^{k,2}(\mathcal{O})$ of $\mathcal{C}^\infty_c(\mathcal{O})$ the space of infinitely differentiable functions with compact supports in $\mathcal{O}$. For simplicity, we write $H^k(\mathcal{O}):=W^{k,2}(\mathcal{O})$ and $H_0^k(\mathcal{O}):=W^{k,2}_0(\mathcal{O})$. We equip $H^1_0(\mathcal{O})$
with the scalar product
\begin{align*}
((u,v))=\int_\mathcal{O}\nabla u\cdot\nabla vdx=\sum_{i=1}^2\int_\mathcal{O}\frac{\partial u}{\partial x_i}\frac{\partial v}{\partial x_i}dx,
\end{align*}

\noindent where $\nabla$ is the gradient operator. It is well known that the norm $\|\cdot\|$ generated by this scalar product is equivalent to the usual norm of $H^1(\mathcal{O})$.

Throughout this paper, we set $\mathbb{Y}=Y\times Y$ for any Banach space $Y$.
Set
\begin{align*}
&\mathcal{C}=\Big\{u\in[\mathcal{C}^\infty_c(\mathcal{O})]^2 : {\rm div}\ u=0\Big\},\nonumber\\
&\mathbb{V}={\rm\ the\ closure\ of}\ \mathcal{C} {\rm\ in}\ \mathbb{H}^1(\mathcal{O}),\\
&\mathbb{H}={\rm\ the\ closure\ of}\ \mathcal{C}\ {\rm in}\ \mathbb{L}^2(\mathcal{O}).\nonumber
\end{align*}

We denote by $(\cdot,\cdot)$ and $|\cdot|$ the inner product in $\mathbb{L}^2(\mathcal{O})$(in $\mathbb{H}$) and the induced norm, respectively. The inner product and the norm of $\mathbb{H}^1_0(\mathcal{O})$
are denoted respectively by $((\cdot,\cdot))$ and $\|\cdot\|$. We endow the space $\mathbb{V}$ with the norm generated by the following inner product
\[
(u,v)_\mathbb{V}:=(u,v)+\alpha ((u,v)),\quad \text{for any } u,v\in\mathbb{V},
\]

\noindent and the norm in $\mathbb{V}$ is denoted by $\|\cdot\|_{\mathbb{V}}$. The $\rm Poincar\acute{e}$'s inequality implies that there exists a constant $\mathcal{P}>0$ such that the following inequalities holds
\begin{align}\label{Poincare}
(\mathcal{P}^2+\alpha)^{-1}\|v\|^2_\mathbb{V} \leq \|v\|^2
\leq\alpha^{-1}\|v\|^2_\mathbb{V},\quad \text{for any } v\in\mathbb{V}.
\end{align}

\vskip 0.2cm
We also introduce the following space
\[
\mathbb{W}=\big\{u\in\mathbb{V}: {\rm curl}(u-\alpha\Delta u)\in L^2(\mathcal{O})\big\},
\]
and endow it with the norm generated by the scalar product
\begin{align}\label{W}
(u,v)_\mathbb{W}:=\big({\rm curl}(u-\alpha\Delta u),{\rm curl}(v-\alpha\Delta v)\big).
\end{align}
The norm in $\mathbb{W}$ is denoted by $\|\cdot\|_{\mathbb{W}}$. It has been proved that, see e.g. \cite{CG,CO}, the following (algebraic and topological) identity holds:
\begin{align*}
\mathbb{W}=\big\{v\in\mathbb{H}^3(\mathcal{O}): {\rm div}\,v=0\ {\rm and}\  v|_{\partial \mathcal{O}}=0\big\},
\end{align*}
moreover, there exists a constant $C> 0$ such that
\begin{align}\label{W-02}
    |v|_{\mathbb{H}^3(\mathcal{O})}
\leq
    C\|v\|_\mathbb{W},\ \ \ \forall v\in \mathbb{W}.
\end{align}
This result states that the norm $\|\cdot\|_\mathbb{W}$ is equivalent to the usual norm in $\mathbb{H}^3(\mathcal{O})$.

Identifying the Hilbert space $\mathbb{V}$ with its dual space $\mathbb{V}^*$ by the Riesz representation, we get a
Gelfand triple
\begin{align*}
\mathbb{W}\subset \mathbb{V}\subset\mathbb{W}^*.
\end{align*}

\noindent We denote by $\langle f,v\rangle$ the dual relation between $f\in\mathbb{W}^*$ and $v\in\mathbb{W}$ from now on. It is easy to see
\[(v,w)_\mathbb{V}=\langle v,w\rangle,\ \ \ \forall\,v\in\mathbb{V},\ \ \forall\,w\in\mathbb{W}.\]

Note that the injection of $\mathbb{W}$ into $\mathbb{V}$ is compact,
thus there exists a sequence $\{e_i\}$ of elements of $\mathbb{W}$ which forms an orthonormal basis in $\mathbb{W}$, and an orthogonal system in $\mathbb{V}$, moreover this sequence verifies:
\begin{align}\label{Basis}
(v,e_i)_{\mathbb{W}}=\lambda_i(v,e_i)_{\mathbb{V}},\ \text{for any }v\in\mathbb{W},
\end{align}
where $0<\lambda_i\uparrow\infty$. From Lemma 4.1 in \cite{CG} we have
\begin{align}\label{regularity of basis}
e_i\in \mathbb{H}^4(\mathcal{O}),\ \ \forall\,i\in\mathbb{N}.
\end{align}

Consider the following ``generalized Stokes equations'':
\begin{align}\label{General Stokes}
\begin{aligned}
v-\alpha \Delta v &=f\quad {\rm in}\quad\mathcal{O},\\
{\rm div}\,v &=0\quad {\rm in}\quad\mathcal{O},\\
v &=0\quad {\rm on}\quad\partial \mathcal{O}.
\end{aligned}
\end{align}

\noindent The following result can be derived from \cite{Solonnikov} and also can be found in \cite{RS-10,RS-12}.
\begin{lem}\label{Lem GS}
Set $l=1,2,3$. Let $f$ be a function in $\mathbb{H}^l$, then the system (\ref{General Stokes}) has a unique solution $v$. Moreover if $f$ is an element of $\mathbb{H}^l\cap\mathbb{V}$, then $v\in \mathbb{H}^{l+2}\cap\mathbb{V}$, and the following relations hold
\begin{gather}
(v,g)_\mathbb{V}=(f,g),\quad \forall\, g\in \mathbb{V},\label{Eq GS-01}\\
|v|_{\mathbb{H}^{l+2}}\leq C|f|_{\mathbb{H}^l}.\label{Eq GS-02}
\end{gather}
\end{lem}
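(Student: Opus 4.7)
The plan is to obtain existence and uniqueness via the Lax--Milgram theorem applied to a natural variational formulation, and then derive the interior/boundary regularity by reducing (\ref{General Stokes}) to a classical generalized Stokes problem and invoking the $L^2$-regularity theory of Solonnikov.

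First, I would recast the problem in weak form: $v\in \mathbb{V}$ is sought so that
\[
(v,g)_\mathbb{V}=(f,g),\quad \forall\,g\in\mathbb{V}.
\]
This formulation is obtained by multiplying $v-\alpha\Delta v=f$ by $g\in\mathbb{V}$ and integrating by parts; the boundary term vanishes because $g\in\mathbb{H}^1_0(\mathcal{O})$, the divergence-free constraint is absorbed in the choice of test space, and any implicit pressure gradient is annihilated by testing against divergence-free $g$. The bilinear form $(\cdot,\cdot)_\mathbb{V}$ is by definition the inner product of the Hilbert space $\mathbb{V}$, hence continuous and coercive; the linear functional $g\mapsto (f,g)$ is continuous on $\mathbb{V}$ since $f\in\mathbb{H}^l\subset \mathbb{L}^2(\mathcal{O})$ and $|(f,g)|\le |f|\,|g|\le \alpha^{-1/2}|f|\,\|g\|_\mathbb{V}$ by (\ref{Poincare}). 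Lax--Milgram then delivers a unique $v\in\mathbb{V}$ satisfying the identity, which is precisely (\ref{Eq GS-01}).

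Next, for the regularity estimate (\ref{Eq GS-02}) under the stronger hypothesis $f\in\mathbb{H}^l\cap\mathbb{V}$, I would argue that the weak solution $v$ solves, in the distributional sense on $\mathcal{O}$,
\[
v-\alpha\Delta v+\nabla p=f,\qquad {\rm div}\,v=0,\qquad v|_{\partial\mathcal{O}}=0,
\]
with some pressure $p$; but since $f\in\mathbb{V}$, testing against gradients of $H^1_0$ functions forces $\nabla p\equiv 0$ (up to additive constants), so the equation reduces to a pure elliptic Dirichlet problem for $v$. The Solonnikov-type $L^2$-regularity theory for the (generalized) Stokes operator on a domain with boundary of class $\mathcal{C}^{3,1}$ then yields, for $l=1,2,3$,
\[
|v|_{\mathbb{H}^{l+2}(\mathcal{O})}\le C\,|f|_{\mathbb{H}^l(\mathcal{O})},
\]
which is (\ref{Eq GS-02}). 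Equivalently, one may proceed by a bootstrap: having $v\in\mathbb{V}$, rewrite $-\alpha\Delta v=f-v$ and apply the classical $\mathbb{H}^2$ Stokes estimate (divergence-free boundary value problem with right-hand side in $\mathbb{L}^2$) to obtain $v\in\mathbb{H}^2$ with the corresponding bound; iterate once (respectively twice) using the $\mathcal{C}^{3,1}$-regularity of $\partial\mathcal{O}$ to reach the $\mathbb{H}^{l+2}$ estimate for $l=2$ (respectively $l=3$).

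The main obstacle is the boundary regularity step: producing the $\mathbb{H}^{l+2}$ bound up to $\partial\mathcal{O}$ is nontrivial because the divergence-free constraint and the Dirichlet condition must be handled simultaneously. This is precisely the content of Solonnikov's classical results, which require the $\mathcal{C}^{3,1}$ assumption on $\partial\mathcal{O}$ made at the start of Section 2; once that theory is invoked as a black box, everything else is elementary.
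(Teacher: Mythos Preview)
The paper does not supply a proof of this lemma at all; it simply records that the result ``can be derived from \cite{Solonnikov} and also can be found in \cite{RS-10,RS-12}.'' Your sketch---Lax--Milgram on $\mathbb{V}$ for the variational identity \eqref{Eq GS-01}, followed by Solonnikov's $L^2$-regularity theory (or an equivalent bootstrap) for \eqref{Eq GS-02}---is precisely the standard route taken in those references, so your approach matches what the paper is pointing to.

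One small correction worth making if you write this out in full: the sentence ``testing against gradients of $H^1_0$ functions forces $\nabla p\equiv 0$'' is not quite right, since your variational formulation lives in the divergence-free space $\mathbb{V}$, which contains no nontrivial gradients to test against. The clean way to kill the pressure is to note that $f\in\mathbb{V}$ gives $\operatorname{div}f=0$, take the divergence of $v-\alpha\Delta v+\nabla p=f$ to obtain $\Delta p=0$, and then read off a homogeneous Neumann condition for $p$ from the normal component on $\partial\mathcal{O}$ (using $v|_{\partial\mathcal{O}}=0$, $f\in\mathbb{V}$, and the $\mathcal{C}^{3,1}$ boundary), which forces $p$ constant. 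With that adjustment the argument is complete.
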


\vskip 0.3cm
Define the Stokes operator by
\begin{align}\label{Eq Stoke}
Au:=-\mathbb{P}\Delta u,\quad\forall\,u\in D(A)=\mathbb{H}^2(\mathcal{O})\cap\mathbb{V},
\end{align}
here the mapping $\mathbb{P}:\mathbb{L}^2(\mathcal{O})\longrightarrow\mathbb{H}$ is the usual Helmholtz-Leray projection. It follows from Lemma \ref{Lem GS} that the operator $(I+\alpha A)^{-1}$ defines an isomorphism from $\mathbb{H}^l(\mathcal{O})\cap\mathbb{H}$
into $\mathbb{H}^{l+2}(\mathcal{O})\cap\mathbb{V}$ for $l=1,2,3$. Moreover, for any $f\in\mathbb{H}^l(\mathcal{O})\cap\mathbb{V}$ and $g\in\mathbb{V}$, the following properties hold
\begin{align}\label{inverse operator transform}
((I+\alpha A)^{-1}f,g)_\mathbb{V}=(f,g),\nonumber\\
\|(I+\alpha A)^{-1}f\|_\mathbb{W}\leq C_A \|f\|_{\mathbb{V}}.
\end{align}

Let $\widehat{A}:=(I+\alpha A)^{-1}A$ , then $\widehat{A}$ is a
continuous linear operator from $\mathbb{H}^l(\mathcal{O})\cap\mathbb{V}$ onto itself for $l=2,3$, and satisfies
\begin{align}\label{A transform 01}
(\widehat{A}u,v)_\mathbb{V}=(Au,v)=((u,v)), \quad \forall\,u\in\mathbb{W},\ v\in\mathbb{V},
\end{align}
hence
\begin{align}\label{A transform 02}
(\widehat{A}u,u)_\mathbb{V}=\|u\|, \quad\forall\,u\in\mathbb{W}.
\end{align}

\noindent We also have
\begin{align}\label{A bound}
\|\widehat{A}u\|_{\mathbb{W}^{*}}\leq C\|\widehat{A}u\|_{\mathbb{V}}\leq C\|u\|_{\mathbb{V}}.
\end{align}

We recall the following estimates which can be found in \cite{RS-12}.

\begin{lem}\label{Lem B}
For any $u,v,w\in\mathbb{W}$, we have
\begin{align}\label{Ineq B 01}
    |({\rm curl}(u-\alpha\Delta u)\times v,w)|
\leq
    C\|u\|_{\mathbb{W}}\|v\|_\mathbb{V}\|w\|_{\mathbb{W}},
\end{align}

\noindent and
\begin{align}\label{Ineq B 02}
    |({\rm curl}(u-\alpha\Delta u)\times u,w)|
\leq
    C\|u\|^2_\mathbb{V}\|w\|_{\mathbb{W}}.
\end{align}
\end{lem}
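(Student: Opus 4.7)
My plan is to recast everything in the 2D scalar-vorticity formulation and then, where necessary, move derivatives onto $w$ by integration by parts (IBP), exploiting the fact that $w\in\mathbb{W}\hookrightarrow\mathbb{H}^3$ can absorb more derivatives than $u$, which on the right-hand side of (\ref{Ineq B 02}) only carries the $\mathbb{H}^1$-level regularity contained in $\|u\|_\mathbb{V}^2$. Setting $z:=u-\alpha\Delta u$ and writing the 2D scalar curl $\omega:={\rm curl}\,z=\partial_1 z_2-\partial_2 z_1$, a direct check shows
\[
\big({\rm curl}\,z\times v,\,w\big)=\int_{\mathcal{O}}\omega\,(v_1 w_2-v_2 w_1)\,dx.
\]
For (\ref{Ineq B 01}) I would simply apply Hölder in $L^2\cdot L^2\cdot L^\infty$: $\|\omega\|_{L^2}\le\|u\|_\mathbb{W}$ by the definition of $\|\cdot\|_\mathbb{W}$, $|v|\le\|v\|_\mathbb{V}$, and $\|w\|_{L^\infty}\le C\|w\|_\mathbb{W}$ via the 2D Sobolev embedding $\mathbb{W}\hookrightarrow\mathbb{H}^3\hookrightarrow\mathbb{L}^\infty$.

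For (\ref{Ineq B 02}) the bound $\|\omega\|_{L^2}\le\|u\|_\mathbb{W}$ is too strong. Since $u_1 w_2-u_2 w_1\equiv 0$ on $\partial\mathcal{O}$ (because $u$ and $w$ vanish there), IBP yields
\[
\int\omega\,(u_1 w_2-u_2 w_1)\,dx=\int z\cdot{\rm curl}(u\times w)\,dx,
\]
and the vector identity ${\rm curl}(u\times w)=(w\cdot\nabla)u-(u\cdot\nabla)w$ (valid because ${\rm div}\,u={\rm div}\,w=0$) rewrites the trilinear form as
\[
\int (u-\alpha\Delta u)\cdot\bigl[(w\cdot\nabla)u-(u\cdot\nabla)w\bigr]dx.
\]
Among the two $\Delta u$-free pieces, $\int u\cdot(w\cdot\nabla)u\,dx=\tfrac12\int w\cdot\nabla|u|^2\,dx$ vanishes after one more IBP thanks to ${\rm div}\,w=0$ and $w|_{\partial\mathcal{O}}=0$, while $|\int u\cdot(u\cdot\nabla)w\,dx|\le\|u\|_{L^4}^2\|\nabla w\|_{L^2}\le C\|u\|_\mathbb{V}^2\|w\|_\mathbb{W}$ by the 2D embedding $\mathbb{H}^1\hookrightarrow\mathbb{L}^4$.

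For the two remaining $\Delta u$-terms I would integrate by parts once more to shift the Laplacian: the boundary contributions vanish because $w|_{\partial\mathcal{O}}=0$ (respectively $u|_{\partial\mathcal{O}}=0$), and the bulk parts decompose into terms of the form $\int\nabla u\cdot\nabla u\cdot\nabla w$ and $\int\nabla u\cdot u\cdot\nabla^2 w$, together with $\tfrac{\alpha}{2}\int w\cdot\nabla|\nabla u|^2\,dx$, which again collapses by divergence-freeness of $w$. Each surviving piece is then dominated by $C\|u\|_\mathbb{V}^2\|w\|_\mathbb{W}$ through Hölder plus the 2D embeddings $\nabla w\in L^\infty$ and $\nabla^2 w\in L^4$ (since $w\in\mathbb{W}\hookrightarrow\mathbb{H}^3$) and $u\in\mathbb{L}^4$ (since $\mathbb{V}\hookrightarrow\mathbb{H}^1\hookrightarrow\mathbb{L}^4$). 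The main obstacle I anticipate is purely the bookkeeping: the two successive IBPs have to be arranged so that every factor of $u$ ends up carrying at most one derivative (measured in $L^2$ or $L^4$) and all surplus derivatives (up to order two) land on $w$, with divergence-freeness and zero trace of $u,w$ used decisively both to kill the only genuinely dangerous $\nabla^2 u$-term and to dispose of every boundary integral along the way.
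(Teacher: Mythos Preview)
Your argument is correct. Note, however, that the paper does not actually prove this lemma: it is stated with the preamble ``We recall the following estimates which can be found in \cite{RS-12}'' and no proof is given. So there is no in-paper proof to compare against; what you have supplied is a self-contained direct proof.

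Your route---rewriting the trilinear form via the 2D scalar vorticity, then for (\ref{Ineq B 02}) integrating by parts twice to transfer all excess derivatives from $u$ onto $w$---is exactly the natural one, and all the steps check out. In particular: the first IBP $\int\omega\,(u_1w_2-u_2w_1)\,dx=\int z\cdot{\rm curl}(u\times w)\,dx$ is justified because $u_1w_2-u_2w_1$ vanishes on $\partial\mathcal{O}$; the identity ${\rm curl}(u\times w)=(w\cdot\nabla)u-(u\cdot\nabla)w$ holds for divergence-free $u,w$; the second IBP on the $\Delta u$-terms has vanishing boundary contributions because $w$ (resp.\ $u$) vanishes on $\partial\mathcal{O}$; and the two ``dangerous'' pieces $\int w\cdot\nabla|u|^2\,dx$ and $\int w\cdot\nabla|\nabla u|^2\,dx$ collapse by ${\rm div}\,w=0$. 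The Sobolev embeddings $\mathbb{W}\hookrightarrow\mathbb{H}^3\hookrightarrow W^{1,\infty}\cap W^{2,4}$ and $\mathbb{V}\hookrightarrow\mathbb{L}^4$ in 2D then close the estimate. The only cosmetic point is that the intermediate manipulations (e.g.\ writing $\Delta u$ and integrating by parts) tacitly use $u\in\mathbb{W}\subset\mathbb{H}^3$ for the integrals to make sense, even though the final bound involves only $\|u\|_\mathbb{V}$; this is consistent with the hypothesis $u\in\mathbb{W}$ in the lemma.
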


Defining the bilinear operator $\widehat{B}(\cdot,\cdot):\ \mathbb{W}\times\mathbb{V}\longrightarrow\mathbb{W}^*$ by
\begin{align*}
\widehat{B}(u,v):=(I+\alpha A)^{-1}\mathbb{P}\big({\rm curl}(u-\alpha \Delta u)\times v\big).
\end{align*}
We have the following consequence from this lemma.

\begin{lem}\label{Lem-B-01}

For any $u\in\mathbb{W}$ and $v\in\mathbb{V}$, it holds that
\begin{align}\label{Eq B-01}
    \|\widehat{B}(u,v)\|_{\mathbb{W}^*}
\leq
    C\|u\|_\mathbb{W}|v|_\mathbb{V},
\end{align}

\noindent and
\begin{align}\label{Eq B-02}
\|\widehat{B}(u,u)\|_{\mathbb{W}^*}
\leq
    C_B\|u\|^2_\mathbb{V}.
\end{align}

\noindent In addition
\begin{align}\label{Eq B-03}
 \langle\widehat{B}(u,v),v\rangle=0, \quad\forall\,u, v\in\mathbb{W},
\end{align}

\noindent which implies
\begin{align}\label{Eq B-04}
 \langle\widehat{B}(u,v),w\rangle=-\langle\widehat{B}(u,w),v\rangle,\quad\forall\,u, v, w\in\mathbb{W}.
\end{align}

\end{lem}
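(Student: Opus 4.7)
The plan is to reduce all four claims to the already-stated estimates in Lemma \ref{Lem B} together with the isomorphism formulas for $(I+\alpha A)^{-1}$. The central step is to derive, for every $u\in\mathbb{W}$, $v\in\mathbb{V}$, $w\in\mathbb{W}$, the duality identity
\begin{align*}
\langle \widehat{B}(u,v),w\rangle = \bigl({\rm curl}(u-\alpha\Delta u)\times v,\ w\bigr),
\end{align*}
which converts the dual pairing into the ordinary $\mathbb{L}^2$ inner product occurring in Lemma \ref{Lem B}. To obtain this, I would start from the definition of $\widehat{B}$, use the fact that $\widehat{B}(u,v)\in\mathbb{V}\hookrightarrow\mathbb{W}^*$ together with $(v,w)_\mathbb{V}=\langle v,w\rangle$ for $v\in\mathbb{V}$, $w\in\mathbb{W}$, and then apply the identity $((I+\alpha A)^{-1}f,g)_\mathbb{V}=(f,g)$ from (\ref{inverse operator transform}). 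The projection $\mathbb{P}$ disappears when paired against $w\in\mathbb{W}\subset\mathbb{H}$ since $\mathbb{P}$ is self-adjoint and $\mathbb{P}w=w$.

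Once this identity is in hand, the four conclusions follow quickly:
\begin{itemize}
\item For (\ref{Eq B-01}), take absolute values and apply (\ref{Ineq B 01}) to get $|\langle \widehat{B}(u,v),w\rangle|\le C\|u\|_\mathbb{W}\|v\|_\mathbb{V}\|w\|_\mathbb{W}$, then take the supremum over $\{w\in\mathbb{W}:\|w\|_\mathbb{W}\le 1\}$.
\item For (\ref{Eq B-02}), argue identically with $v=u$, using (\ref{Ineq B 02}) instead.
\item For (\ref{Eq B-03}), substitute $w=v\in\mathbb{W}$; pointwise one has $\bigl({\rm curl}(u-\alpha\Delta u)\times v\bigr)\cdot v\equiv 0$ because $a\times v\perp v$ in the relevant $2$D/$3$D-embedded sense, and integration yields $0$.
\item For (\ref{Eq B-04}), use bilinearity of $\widehat{B}(u,\cdot)$ and expand $\langle\widehat{B}(u,v+w),v+w\rangle=0$; the diagonal terms vanish by (\ref{Eq B-03}), leaving $\langle \widehat{B}(u,v),w\rangle+\langle \widehat{B}(u,w),v\rangle=0$.
\end{itemize}

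The only nontrivial point is the rigorous justification of the duality identity at the minimal regularity $u\in\mathbb{W}$, $v\in\mathbb{V}$. A priori, ${\rm curl}(u-\alpha\Delta u)\times v$ need not lie in $\mathbb{H}^l\cap\mathbb{V}$ as required by the strict statement of Lemma \ref{Lem GS}, so the identity $((I+\alpha A)^{-1}f,g)_\mathbb{V}=(f,g)$ cannot be invoked verbatim. I would handle this by a density argument: approximate $u$ in $\mathbb{W}$ and $v$ in $\mathbb{V}$ by smooth divergence-free functions (e.g.\ via the basis $\{e_i\}$ from (\ref{Basis})--(\ref{regularity of basis})), apply the identity at the smooth level where both sides are classically defined, and pass to the limit using (\ref{Ineq B 01}) to control the right-hand side and the definition of $\widehat{B}$ via $(I+\alpha A)^{-1}\mathbb{P}$ on the left. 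Once this approximation is established, all four conclusions follow from the routine steps outlined above, and this approximation step is the only place where technical care is needed.
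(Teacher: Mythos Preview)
Your proposal is correct and is precisely the natural elaboration the paper leaves implicit: the paper does not give a proof of Lemma~\ref{Lem-B-01} at all, but merely introduces it with the sentence ``We have the following consequence from this lemma,'' referring to Lemma~\ref{Lem B}. Your reduction via the duality identity $\langle\widehat{B}(u,v),w\rangle=({\rm curl}(u-\alpha\Delta u)\times v,\,w)$, derived from (\ref{inverse operator transform}) and the self-adjointness of $\mathbb{P}$, together with the density/approximation step you flag for the borderline regularity $v\in\mathbb{V}$, is exactly the standard route and matches how the paper's cited references \cite{RS-10,RS-12} handle this; nothing needs to be changed.
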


\vskip 0.2cm

We are now to introduce $(\Omega,\mathcal{F},\mathbb{F}:=\{\mathcal{F}_t\}_{t\in[0,T]},P, W, \widetilde{N}^{\epsilon^{-1}})$.
\vskip 0.2cm

 Set $S$ be a locally compact Polish space. We put $M_{FC}(S)$ be the space of all Borel measures $\vartheta$ on $S$ such that  $\vartheta(K)<\infty$
for each compact set $K\subseteq S$. Endow $M_{FC}(S)$ with the weakest topology, denoted it by $\mathcal{T}(M_{FC}(S))$, such that for each $f\in C_c(S)$ the mapping
$\vartheta\in M_{FC}(S)\rightarrow \int_Sf(s)\vartheta(ds)$ is continuous. This topology is metrizable such that $M_{FC}(S)$ is a Polish space, see \cite{BDM} for more details.
\vskip 0.2cm

Recall that $\mathbb{Z}$ is a locally compact Polish space, and in this paper, we assume that $\nu$ is a given element of $M_{FC}(\mathbb{Z})$.  We specify the underlying probability space $(\Omega, \mathcal{F}, {\mathbb{F}}:=\{\mathcal{F}_t\}_{t\in [0,T]},P)$ in the following way:
\begin{align*}
  \Omega:=C\big([0,T],\mathbb{R}\big)\times M_{FC}\big([0,T]\times \mathbb{Z}
  \times [0,\infty)\big),\qquad \mathcal{F}:=\mathcal{B}(C\big([0,T],\mathbb{R}\big))\otimes\mathcal{T}(M_{FC}\big([0,T]\times \mathbb{Z}
  \times [0,\infty)).
\end{align*}
We introduce the functions
\begin{align*}
&W\colon \Omega \rightarrow C\big([0,T],\mathbb{R}\big),
\qquad W(\alpha,\beta)(t)=\alpha(t),\\
& N\colon \Omega \rightarrow M_{FC}\big([0,T]\times \mathbb{Z}\times [0,\infty)\big),\qquad  N(\alpha,\beta)=\beta.
\end{align*}
Define for each $t\in [0,T]$ the $\sigma$-algebra
\begin{align*}
\mathcal{G}_{t}:=\sigma\left(\left\{\big(W(s), \, N((0,s]\times A)\big):\,
0\leq s\leq t,\,A\in \mathcal{B}\big(\mathbb{Z}\times [0,\infty)\big)\right\}\right).
\end{align*}
Let $\lambda_T$ and $\lambda_\infty$ be Lebesgue measure on $[0,T]$ and $[0,\infty)$ respectively. It follows from \cite[Sec.I.8]{Ikeda-Watanabe} that there exists a unique probability measure $P$
 on $(\Omega,\mathcal{F})$ such that:
\begin{enumerate}
\item[(a)] $W$ is one-dimension standard Brownian motion;
\item[(b)] $N$ is a Poisson random measure on $\Omega$ with intensity measure $\lambda_T\otimes\nu\otimes \lambda_\infty$;
\item[(c)] $W$ and $N$ are independent.
\end{enumerate}
We denote by $\mathbb{F}:=\{{\mathcal{F}}_{t}\}_{t\in[0,T]}$ the
$P$-completion of $\{\mathcal{G}_{t}\}_{t\in[0,T]}$ and by
$\mathcal P$ the $\mathbb{F}$-predictable $\sigma$-field
on $[0,T]\times \Omega$.
Define
\begin{align*}
{\mathcal{A}}
:=\left\{\varphi\colon [0,T]\times {\mathbb{Z}}\times\Omega\to [0,\infty):
\, (\mathcal{P}\otimes\mathcal{B}({\mathbb{Z}}))\setminus\mathcal{B}[0,\infty)\text{-measurable}\right\}.
\end{align*}

For $\varphi\in{\mathcal{A}}$, define a
counting process $N^{\varphi}$ on $[0,T]\times {{\mathbb{Z}}}$ by
   \begin{align*}
      N^\varphi((0,t]\times A)=\int_{(0,t]\times A\times (0,\infty)}1_{[0,\varphi(s,z)]}(r)\, N(ds, dz, dr),
   \end{align*}
for $t\in[0,T]$ and $A\in\mathcal{B}({\mathbb{Z}})$. When $\varphi(s,z,\omega)=\epsilon^{-1}$, we write $N^\varphi=N^{\epsilon^{-1}}$.
It is easy to see that $N^{\epsilon^{-1}}$ is a Poisson random measure on $[0,T]\times\mathbb{Z}$ with a mean measure $\epsilon^{-1}\lambda_T\otimes\nu$. We denote $\widetilde{N}^{\epsilon^{-1}}$ the compensated Poisson random measure respect to $N^{\epsilon^{-1}}$.

\vskip 0.2cm
At the end of this section, we introduce the following notions which will be used later.
\vskip 0.2cm

For each $f\in L^2([0,T],\mathbb{R})$, we introduce the quantity
\begin{align*}
Q_{1}(f)
:=\frac{1}{2}\int_{0}^{T}|{f(s)}|^{2}\,ds,
\end{align*}
and we define for each $m\in\mathbb{N}$ the space
\begin{align*}
 S_1^m:=\Big\{f\in L^{2}([0,T],\mathbb{R}):\,Q_1(f)\leq m\Big\}.
\end{align*}
Equiped with the weak topology, $S_1^m$ is a compact subset of $L^2([0,T],\mathbb{R}).$
We will throughout consider $S_1^m$ endowed with this topology.
By defining the function
\begin{align*}
\ell:[0,\infty)\rightarrow[0,\infty), \qquad
\ell(x)=x\log x-x +1,
\end{align*}
we introduce for each measurable function
$g\colon [0,T]\times {\mathbb{Z}}\to [0,\infty)$ the quantity
\begin{align*}
Q_2(g):=\int_{[0,T]\times {\mathbb{Z}}}\ell\big(g(s,z)\big) \,ds \,\nu(dz).
\end{align*}
Define for each $m\in\mathbb{N}$ the space
\begin{align*}
     S_2^m:=\Big\{g:[0,T]\times {\mathbb{Z}}\rightarrow[0,\infty):\,Q_2(g)\leq m\Big\}.
\end{align*}
A function $g\in S_2^m$ can be identified with a measure $\nu_T^g\in M_{FC}([0,T]\times {\mathbb{Z}})$, defined by
   \begin{align}\label{eq.corres-func-meas}
      \nu_T^g(A)=\int_A g(s,z)\,ds\,\nu(dz)\ \quad\text{ for all } A\in\mathcal{B}([0,T]\times {\mathbb{Z}}).
   \end{align}
This identification induces a topology on $S_2^m$ under which $S_2^m$ is a compact space, see \cite{BCD}.
Throughout, we use this topology on $S_2^m$.\\
\indent Define $S^m=S_1^m\times S_2^m$ and $\mathcal{S}=\bigcup_{m\geq1}S^m$.

\section{Hypotheses}

In this section, we will formulate precise assumptions on coefficients.

Let
\begin{align*}
F & :\mathbb{V}\times[0,T]\longrightarrow\mathbb{V}; \\
G & :\mathbb{V}\times[0,T]\longrightarrow\mathbb{V}; \\
\sigma & :[0,T]\times \mathbb{V}\times \mathbb{Z}\longrightarrow\mathbb{V},
\end{align*}
be given measurable maps. We introduce the following notations:
\begin{gather*}
\widehat{F}(u,t):=(I+\alpha A)^{-1} F(u,t) ;\\
\widehat{G}(u,t):=(I+\alpha A)^{-1} G(u,t) ;\\
\widehat{\sigma}(t,u,z):=(I+\alpha A)^{-1} \sigma(t,u,z) .
\end{gather*}

The following assumptions are from \cite{SZZ}, which guarantee that (\ref{1.a}) admits a unique solution.
\begin{con}\label{condition 1}
There exists constants $C_F,C_G,C\geq 0$, such that the following conditions hold for all $u_1,u_2,u\in\mathbb{V}$ and $t\in [0,T]$:

\noindent (1) (Lipschitz)
\begin{eqnarray}\label{Condition F}
\|F(u_1,t)-F(u_2,t)\|_{\mathbb{V}}^2\leq C_F\|u_1-u_2\|_{\mathbb{V}}^2,\ \ F(0,t)=0,
\end{eqnarray}
\begin{eqnarray}\label{Condition G}
\|G(u_1,t)-G(u_2,t)\|_{\mathbb{V}}^2\leq C_G\|u_1-u_2\|_{\mathbb{V}}^2,\ \ G(0,t)=0,
\end{eqnarray}
\begin{eqnarray}\label{Condition S1}
\int_\mathbb{Z} \|\sigma(t,u_1,v)-\sigma(t,u_2,z)\|_{\mathbb{V}}^2\,\nu(dz)\leq C_L\|u_1-u_2\|_{\mathbb{V}}^2.
\end{eqnarray}

\noindent (2) (Growth)
\begin{eqnarray}\label{Condition S2}
\int_\mathbb{Z} \|\sigma(t,u,z)\|_{\mathbb{V}}^{2q}\,\nu(dz)\leq C(1+\|u\|_{\mathbb{V}}^{2q}),\ \ \ q=1,2.
\end{eqnarray}
\end{con}
\vskip0.2cm

Let
\begin{eqnarray*}
\big\|\sigma(t,z)\big\|_{0,\mathbb{V}}=\sup_{u\in\mathbb{V}}\frac{\|\sigma(t,u,z)\|_\mathbb{V}}{1+\|u\|_\mathbb{V}},\ \ \ (t,z)\in[0,T]\times\mathbb{Z},
\end{eqnarray*}
\begin{eqnarray*}
\big\|\sigma(t,z)\big\|_{1,\mathbb{V}}=\sup_{u_1,u_2\in\mathbb{V},u_1\neq u_2}\frac{\|\sigma(t,u_1,z)-\sigma(t,u_2,z)\|_\mathbb{V}}{\|u_1-u_2\|_\mathbb{V}},\ \ \ (t,z)\in[0,T]\times\mathbb{Z}.
\end{eqnarray*}

To study large deviation principle of (\ref{1.a}), besides Condition \ref{condition 1}, we further need
\begin{con}\label{condition 2}
\noindent (1)($L^2$-integrability) For $i=0,1$, $\|\sigma(\cdot,\cdot)\|_{i,\mathbb{V}}\in L^2(\lambda_T\otimes\nu)$, i.e.
$$\int_{[0,T]}\int_{{\mathbb{Z}}}\|\sigma(t,z)\|_{i,\mathbb{V}}^2\nu(dz)dt<\infty.$$
\noindent (2)(Exponential integrability) For $i=0,1$, there exists $\delta_1^i>0$ such that for all $E\in{\mathcal{B}}([0,T]\times\mathbb{Z})$ satisfying $\lambda_T\otimes\nu(E)<\infty$, the following holds
$$\int_E e^{\delta_1^i\|\sigma(t,z)\|_{i,\mathbb{V}}^2}\nu(dz)dt<\infty.$$
\end{con}

\begin{remark}\label{remark}
Condition \ref{condition 2} implies that, for every $\delta_2^i>0$ and for all $E\in{\mathcal{B}}([0,T]\times\mathbb{Z})$ satisfying $\lambda_T\otimes\nu(E)<\infty$,
$$\int_E e^{\delta_2^i\|\sigma(t,z)\|_{i,\mathbb{V}}}\nu(dz)dt<\infty.$$
\end{remark}
The following lemma was proved in Budhiraja, Chen and Dupuis \cite{BCD}. For the second part of this lemma, the case $i=0$ can be found in Remark 2 of Yang, Zhai and Zhang \cite{YZZ}, and the case $i=1$ can be proved similarly. We omit its proof.
\begin{lem}\label{H-lemma 1}
Under Condition \ref{condition 1} and \ref{condition 2},\\
\noindent (1) For $i=0,1$ and every $m\in\mathbb{N}$,
\begin{eqnarray}\label{H-1}
C_{i,2}^{m}:=\sup_{g\in{S_2^m}}\int_0^T\int_{\mathbb{Z}}\|\sigma(s,z)\|_{i,\mathbb{V}}^{2}(g(s,z)+1)\nu(dz)ds<\infty,
\end{eqnarray}
\begin{eqnarray}\label{H-2}
C_{i,1}^m:=\sup_{g\in{S_2^m}}\int_0^T\int_{\mathbb{Z}}\|\sigma(s,z)\|_{i,\mathbb{V}}|g(s,z)-1|\nu(dz)ds<\infty.
\end{eqnarray}
\noindent (2) For every $\eta>0$, there exist $\delta>0$ such that for any $A\subset[0,T]$ satisfying $\lambda_T(A)<\delta$
\begin{eqnarray}\label{H-3}
\sup_{g\in{S_2^m}}\int_{A}\int_{\mathbb{Z}}\|\sigma(s,z)\|_{i,\mathbb{V}}|g(s,z)-1|\nu(dz)ds\leq\eta.
\end{eqnarray}
\end{lem}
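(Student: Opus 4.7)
The plan rests on two convex-analytic ingredients. The first is the scaled Fenchel--Young inequality for the conjugate pair $\ell(y)=y\log y-y+1$ and $\ell^{*}(x)=e^{x}-1$,
$$xy\le \frac{1}{\beta}\bigl(e^{\beta x}-1\bigr)+\frac{1}{\beta}\ell(y)\qquad(x,y\ge 0,\ \beta>0),$$
and the second is the pointwise bound
$$(g-1)^{2}\le 2(g+1)\ell(g)\qquad(g\ge 0),$$
which I would verify by checking that $\phi(g):=2(g+1)\ell(g)-(g-1)^{2}$ satisfies $\phi(1)=\phi'(1)=0$ and $\phi''(g)=4\log g+2/g>0$ on $(0,\infty)$, so $\phi\ge 0$ by convexity.

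For~(1), first I would bound $C_{i,2}^{m}$ by writing $(g+1)\|\sigma\|^{2}=\|\sigma\|^{2}+\|\sigma\|^{2}g$: the first summand is finite by Condition~\ref{condition 2}(1), and for the second, Fenchel--Young with $\beta\in(0,\delta_{1}^{i})$ gives
$$\int\|\sigma\|_{i,\mathbb V}^{2}\,g\,\nu(dz)ds\le \frac{m}{\beta}+\frac{1}{\beta}\int\bigl(e^{\beta\|\sigma\|^{2}}-1\bigr)\nu(dz)ds,$$
whose last integral is finite once one splits $[0,T]\times\mathbb Z$ along $\{\|\sigma\|_{i,\mathbb V}>1\}$ (finite $\lambda_{T}\otimes\nu$-measure by Chebyshev applied to Condition~\ref{condition 2}(1), so Condition~\ref{condition 2}(2) applies) and its complement (where $e^{\beta x}-1\le \beta e^{\beta}x$ reduces matters to $\int\|\sigma\|^{2}$). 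The bound on $C_{i,1}^{m}$ is then immediate from Cauchy--Schwarz in $(s,z)$ combined with the pointwise estimate:
$$\int\|\sigma\||g-1|\,d\nu ds\le \bigl(C_{i,2}^{m}\bigr)^{\!1/2}\Bigl(\int\frac{(g-1)^{2}}{g+1}\,d\nu ds\Bigr)^{\!1/2}\le \sqrt{2m\,C_{i,2}^{m}}.$$

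For~(2), the strategy is to split the integrand according to whether $g$ is near $1$. On $\{g\le 2\}$ the pointwise bound gives $|g-1|\le\sqrt{6\ell(g)}$, so Cauchy--Schwarz produces
$$\int_{A}\!\int_{\{g\le 2\}}\!\|\sigma\||g-1|\,d\nu ds\le \sqrt{6m}\,\Bigl(\int_{A}\!\int_{\mathbb Z}\|\sigma\|^{2}\,d\nu ds\Bigr)^{\!1/2},$$
which tends to $0$ as $\lambda_{T}(A)\to 0$ by absolute continuity of the $L^{1}([0,T])$-function $t\mapsto\int_{\mathbb Z}\|\sigma(t,z)\|^{2}\nu(dz)$. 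On $\{g>2\}$, the bound $\mathbf{1}_{\{g>2\}}\le \ell(g)/\ell(2)$ confines this region to finite $\lambda_{T}\otimes\nu$-measure; a further two-scale truncation in $g$, using $g\,\mathbf{1}_{\{g>K\}}\le \ell(g)/(\log K-1)$ for $K\ge e$, and in $\|\sigma\|$, using Condition~\ref{condition 2}(2) together with Remark~\ref{remark} to control the tail $\{\|\sigma\|>M\}$, reduces each remaining piece to one that vanishes as $K\to\infty$, $M\to\infty$, or $\lambda_{T}(A)\to 0$ in turn.

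The main obstacle is precisely this last step: the naive Fenchel--Young estimate on $\int_{A}\!\int\|\sigma\|^{2}g$ leaves a residual $m/\beta$ independent of $\lambda_{T}(A)$, so the simplest Cauchy--Schwarz approach would only prove the supremum in~(2) is bounded, not that it vanishes. The separation $\{g\le 2\}\cup\{g>2\}$---the former yielding smallness via the quadratic bound and the absolute continuity of $\int\|\sigma\|^{2}$, the latter being automatically confined to a set of finite total measure by the entropy constraint---is what produces the required smallness, and is the real content of the argument.
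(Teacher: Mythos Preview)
The paper does not give its own proof of this lemma: the text immediately preceding it attributes part~(1) to Budhiraja--Chen--Dupuis~\cite{BCD} and part~(2) to Remark~2 of Yang--Zhai--Zhang~\cite{YZZ}, and then explicitly writes ``We omit its proof.'' Your argument is essentially the standard convex-analytic one found in those references, so there is no in-paper proof to compare against.

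Your treatment of~(1) is correct as written. For~(2), the split $\{g\le 2\}\cup\{g>2\}$ and the quadratic estimate $|g-1|\le\sqrt{6\ell(g)}$ on the first piece are exactly right, and your closing paragraph correctly diagnoses why a naive Cauchy--Schwarz cannot work. One point on the $\{g>2\}$ piece deserves care: the bound $g\,\mathbf{1}_{\{g>K\}}\le \ell(g)/(\log K-1)$ applied on its own would leave you controlling $\sup_{g\in S_2^m}\int\|\sigma\|_{i,\mathbb V}\,\ell(g)\,d\nu\,ds$, and this supremum is \emph{infinite} whenever $\|\sigma\|_{i,\mathbb V}$ is unbounded (simply concentrate all of the entropy mass where $\|\sigma\|$ is large). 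The truncations therefore have to be interleaved in the right order. One clean route is to drop the $K$-truncation and instead apply Fenchel--Young with parameter $\beta$ directly on $\{g>2\}$,
\[
\|\sigma\|\,(g-1)\le \tfrac{1}{\beta}\bigl(e^{\beta\|\sigma\|}-1\bigr)+\tfrac{1}{\beta}\ell(g),
\]
first taking $\beta$ large so that $m/\beta$ is small, and only then choosing $M$ large so that the exponential tail $\int_{\{\|\sigma\|>M\}}e^{\beta\|\sigma\|}\le e^{-\beta M}\int_{\{\|\sigma\|>1\}}e^{2\beta\|\sigma\|}$ is small---which is precisely where Remark~\ref{remark} (integrability of $e^{\delta\|\sigma\|}$ for \emph{every} $\delta$, on the fixed finite-measure set $\{\|\sigma\|>1\}$) enters. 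The remaining piece $\{g>2,\|\sigma\|\le M\}$ is then handled by Cauchy--Schwarz against $(\lambda_T\otimes\nu)(\{g>2\})\le m/\ell(2)$ and the absolute continuity of $\int_A\!\int\|\sigma\|^2$. You already identified Remark~\ref{remark} as the key input, so this is a refinement of the order of limits rather than a missing idea.
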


We also need the following lemma, the proof of which can be found in Budhiraja, Chen and Dupuis \cite{BCD}.
\begin{lem}\label{H-lemma 3}
Fix $m\in\mathbb{N}$, and let $g_n,g\in S_2^m$ be such that $g_n\rightarrow g$ as $n\rightarrow\infty$. Let $h:[0,T]\times\mathbb{Z}\rightarrow\mathbb{R}$ be a measurable function such that
$$\int_{{[0,T]\times\mathbb{Z}}}|h(s,z)|^2\nu(dz)ds<\infty,$$
and for all $\delta\in(0,\infty)$
$$\int_E\text{exp}(\delta|h(s,z)|)\nu(dz)ds<\infty.$$
for all $E\in{\mathcal{B}}([0,T]\times\mathbb{Z})$ satisfying $\lambda_T\otimes\nu(E)<\infty$. Then
$$\lim_{n\rightarrow\infty}\int_{{[0,T]\times\mathbb{Z}}}h(s,z)(g_n(s,z)-1)\nu(dz)ds= \int_{{[0,T]\times\mathbb{Z}}}h(s,z)(g(s,z)-1)\nu(dz)ds.$$
\end{lem}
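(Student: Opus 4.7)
Writing the integrals in the claim as $\int h\,d\nu_T^{g_n}-\int h\,d(\lambda_T\otimes\nu)$ and similarly for $g$, where $\nu_T^{g}$ is the measure associated to a density via (\ref{eq.corres-func-meas}), the strategy is to exploit that $g_n\to g$ in $S_2^m$ is, by definition, the convergence $\nu_T^{g_n}\to\nu_T^{g}$ against $C_c([0,T]\times\mathbb{Z})$-test functions, and to upgrade this from $C_c$ to the more singular $h$ through a truncation/approximation argument whose uniform-in-$n$ errors are controlled by Fenchel--Young duality between $\ell$ and the exponential. After splitting $h=h^{+}-h^{-}$ we may and do assume $h\geq 0$.

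The core estimate is the Fenchel--Young inequality for $\ell$ (using $\ell^{*}(x)=e^x-1$),
\[
ab\;\leq\;\tfrac{1}{\sigma}\ell(b)+\tfrac{1}{\sigma}\bigl(e^{\sigma a}-1\bigr),\qquad a,b\geq 0,\;\sigma>0,
\]
which, combined with the bound $\int\ell(g_n)\,d(\lambda_T\otimes\nu)\leq m$ defining $S_2^m$, gives for every measurable $F\subset[0,T]\times\mathbb{Z}$ and every $\sigma>0$,
\[
\sup_n\int_F h\,g_n\,d(\lambda_T\otimes\nu)\;\leq\;\tfrac{m}{\sigma}+\tfrac{1}{\sigma}\int_F\bigl(e^{\sigma h}-1\bigr)\,d(\lambda_T\otimes\nu),
\]
with the analogous bound holding for $g$ and for the trivial density $g\equiv 1$.

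I would execute the proof in three steps. Step (i): given $\eta>0$, choose $0<\eta_0<M<\infty$ and a compact $K\subset[0,T]\times\mathbb{Z}$ and set $\tilde h:=h\cdot 1_{\{\eta_0\leq h\leq M\}\cap K}$; three applications of the core estimate to the pieces $\{h>M\}$, $\{0<h<\eta_0\}$ and $K^c\cap\{\eta_0\leq h\leq M\}$ would deliver a uniform bound $\sup_n\int|h-\tilde h|(g_n+g+1)\,d(\lambda_T\otimes\nu)<\eta$. The key analytic input is that $\{h\geq \eta_0\}$ has finite $\lambda_T\otimes\nu$-measure (from $h\in L^2$), so the exponential-integrability hypothesis applies on it, together with the tail decay: for any $\sigma<\delta$,
\[
\int_{\{h>M\}}(e^{\sigma h}-1)\,d(\lambda_T\otimes\nu)\;\leq\;e^{-(\delta-\sigma)M}\int_{\{h>1\}}e^{\delta h}\,d(\lambda_T\otimes\nu)\;\longrightarrow\;0\quad(M\to\infty).
\]
Step (ii): the bounded, compactly-supported $\tilde h$ is approximated in measure by $h_k\in C_c([0,T]\times\mathbb{Z})$ with $0\leq h_k\leq M$ and supported in a fixed compact neighbourhood $K'\supset K$. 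For such $h_k$, the core estimate plus dominated convergence (the integrand $(e^{\sigma|\tilde h-h_k|}-1)$ is uniformly bounded by $(e^{M\sigma}-1)\,1_{K'}$, which is integrable over $K'$) makes $\sup_n\int|\tilde h-h_k|\,g_n\,d(\lambda_T\otimes\nu)$ arbitrarily small by taking $\sigma$ large and then $k$ large. Step (iii): for each fixed $h_k\in C_c$, the definition of the $S_2^m$-topology yields $\int h_k(g_n-1)\,d(\lambda_T\otimes\nu)\to\int h_k(g-1)\,d(\lambda_T\otimes\nu)$ directly as $n\to\infty$. Chaining the three steps via the triangle inequality closes the argument.

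\textbf{Main obstacle.} Step (i) carries the analytic weight: because neither $\lambda_T\otimes\nu$ nor the densities $g_n$ are finite and the only control on $g_n$ is entropic, the double truncation in both the range of $h$ and the base set $K$ must be organised so that every tail contribution is dominated uniformly in $n$ using just $\int\ell(g_n)\leq m$ and the local exponential integrability of $h$. The ordering of the parameters---$\sigma$ first, then $M$ and $\eta_0$, then $K$---is what makes the bound telescope to $0$.
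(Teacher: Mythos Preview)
The paper does not prove this lemma at all; it cites \cite{BCD}. Your overall architecture---truncate $h$, pass to $C_c$ test functions, invoke the definition of the $S_2^m$-topology, and control the truncation errors uniformly in $n$ via the entropy bound $\int\ell(g_n)\leq m$---is exactly the one used there, and your Steps~(ii)--(iii) together with the large-$M$ and $K^c$ pieces of Step~(i) are fine.

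The gap is in the region $\{0<h<\eta_0\}$. Your Fenchel--Young ``core estimate'' is useless there: since $\sigma^{-1}(e^{\sigma h}-1)\geq h$ for every $\sigma>0$, the right-hand side of $\int_F hg_n\leq m/\sigma+\sigma^{-1}\int_F(e^{\sigma h}-1)$ equals $+\infty$ whenever $\int_{\{h<\eta_0\}}h=+\infty$, and the hypotheses (only $h\in L^2$, on a space of possibly infinite $\lambda_T\otimes\nu$-mass) do not exclude this. For the same reason your opening decomposition $\int h(g_n-1)=\int h\,d\nu_T^{g_n}-\int h\,d(\lambda_T\otimes\nu)$ and the target bound $\sup_n\int|h-\tilde h|(g_n+g+1)<\eta$ can both be $+\infty$ already from the constant-density term. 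The repair, which is what \cite{BCD} actually does, is to keep the factor $g_n-1$ intact and use instead the pair of elementary inequalities $|x-1|^2\,1_{\{|x-1|\leq\beta\}}\leq c_1(\beta)\,\ell(x)$ and $|x-1|\,1_{\{|x-1|>\beta\}}\leq c_2(\beta)\,\ell(x)$: on $\{h<\eta_0\}$ the first plus Cauchy--Schwarz gives
\[
\int_{\{h<\eta_0\}} h\,|g_n-1|\,1_{\{|g_n-1|\leq\beta\}}\;\leq\;(c_1 m)^{1/2}\Bigl(\int_{\{h<\eta_0\}}h^2\Bigr)^{1/2}\longrightarrow 0,
\]
and the second gives $\int_{\{h<\eta_0\}} h\,|g_n-1|\,1_{\{|g_n-1|>\beta\}}\leq c_2\,\eta_0\,m\to 0$, both uniformly in $n$. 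With this correction your outline goes through.
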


Let $\mathbb{K}$ be a separable Hilbert space. Given $p>1$, $\beta\in(0,1)$, let $W^{\beta,p}([0,T],\mathbb{K})$ be the
 space of all $u\in L^p([0,T],\mathbb{K})$ such that
$$
\int_0^T\int_0^T\frac{\|u(t)-u(s)\|^p_\mathbb{K}}{|t-s|^{1+\beta p}}dtds<\infty,
$$
endowed with the norm
$$
\|u\|^p_{W^{\beta,p}([0,T],\mathbb{K})}:=\int_0^T\|u(t)\|^p_{\mathbb{K}}dt+\int_0^T\int_0^T\frac{\|u(t)-u(s)\|^p_\mathbb{K}}{|t-s|^{1+\beta p}}dtds.
$$

The following result is a variant of the criteria for compactness proved in Lions \cite{Lions} (Sect. 5, Ch. I)
 and Temam \cite{Temam 1983} (Sect. 13.3).
\begin{lem}\label{H-lemma 4}{\rm
Let $\mathbb{K}_0\subset \mathbb{K}\subset \mathbb{K}_1$ be Banach spaces, $\mathbb{K}_0$ and $\mathbb{K}_1$ reflexive, with compact embedding of $\mathbb{K}_0$ into $\mathbb{K}$.
For $p\in(1,\infty)$ and $\beta\in(0,1)$, let $\Lambda$ be the space
$$
\Lambda=L^p([0,T],\mathbb{K}_0)\cap W^{\beta,p}([0,T],\mathbb{K}_1)
$$
endowed with the natural norm. Then the embedding of $\Lambda$ into $L^p([0,T],\mathbb{K})$ is compact.
}\end{lem}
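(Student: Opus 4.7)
The plan is to combine weak compactness from reflexivity with Ehrling's interpolation inequality and a time-translation estimate coming from the fractional Sobolev norm, reducing the assertion essentially to the classical Aubin--Lions theorem cited from Lions and Temam. Given a bounded sequence $\{u_n\}\subset\Lambda$, reflexivity of $L^p([0,T],\mathbb{K}_0)$ and $W^{\beta,p}([0,T],\mathbb{K}_1)$ yields a subsequence (not relabeled) with $u_n\rightharpoonup u$ weakly in $\Lambda$. Replacing $u_n$ by $u_n-u$, it suffices to show $u_n\to 0$ strongly in $L^p([0,T],\mathbb{K})$ whenever $u_n\rightharpoonup 0$ weakly in $\Lambda$.

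The analytic backbone is Ehrling's lemma: since $\mathbb{K}_0\hookrightarrow\mathbb{K}$ is compact and $\mathbb{K}\hookrightarrow\mathbb{K}_1$ is continuous, for every $\eta>0$ there exists $C_\eta>0$ with
$$
\|v\|_\mathbb{K}\le\eta\|v\|_{\mathbb{K}_0}+C_\eta\|v\|_{\mathbb{K}_1},\qquad v\in\mathbb{K}_0.
$$
Raising to the $p$-th power and integrating in $t$ gives
$$
\|u_n\|_{L^p([0,T],\mathbb{K})}^p\le 2^{p-1}\eta^p\|u_n\|_{L^p([0,T],\mathbb{K}_0)}^p+2^{p-1}C_\eta^p\|u_n\|_{L^p([0,T],\mathbb{K}_1)}^p.
$$
The first summand is $\le C\eta^p$ uniformly in $n$, so choosing $\eta$ small it suffices to prove $u_n\to 0$ strongly in $L^p([0,T],\mathbb{K}_1)$.

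For the latter step I would verify the Riesz--Fr\'echet--Kolmogorov compactness criterion in the Bochner space $L^p([0,T],\mathbb{K}_1)$. The Gagliardo seminorm yields the standard translation estimate
$$
\int_0^{T-h}\|u_n(t+h)-u_n(t)\|_{\mathbb{K}_1}^p\,dt\le C h^{\beta p}\|u_n\|_{W^{\beta,p}([0,T],\mathbb{K}_1)}^p,
$$
obtained by restricting the double integral to the strip $\{|t-s|<2h\}$ and applying a triangle inequality in time, which gives equicontinuity in $L^p$ uniformly in $n$. Combined with pointwise-in-$t$ precompactness of $\{u_n(t)\}_n$ in $\mathbb{K}_1$ inherited from the $L^p([0,T],\mathbb{K}_0)$-bound through the compact chain $\mathbb{K}_0\hookrightarrow\hookrightarrow\mathbb{K}\hookrightarrow\mathbb{K}_1$, this produces a strongly convergent subsequence in $L^p([0,T],\mathbb{K}_1)$ whose limit must coincide with the weak limit $0$.

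The main technical obstacle will be making the tightness-in-$\mathbb{K}_1$ hypothesis of the Kolmogorov criterion in a general Bochner space fully rigorous. If that step becomes delicate, a cleaner alternative is to work with time-mollifications $u_n^\delta:=\rho_\delta*u_n$: these are equicontinuous in $C([0,T],\mathbb{K}_1)$ taking values in a bounded subset of $\mathbb{K}_0$ and hence in a compact subset of $\mathbb{K}$, so Arzel\`a--Ascoli yields uniform $\mathbb{K}$-convergence along a subsequence; meanwhile the mollification error $\|u_n-u_n^\delta\|_{L^p([0,T],\mathbb{K}_1)}$ is bounded uniformly by $C\delta^\beta$ via the $W^{\beta,p}$-regularity, so plugging both facts into Ehrling's splitting above closes the argument.
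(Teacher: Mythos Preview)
The paper does not prove this lemma at all; it merely states it and refers to Lions (Ch.~I, Sect.~5) and Temam (Sect.~13.3). Your outline is essentially the standard argument found in those references (and, in the fractional-in-time form stated here, usually attributed to Flandoli--Gatarek): Ehrling's inequality reduces matters to strong convergence in $L^p([0,T],\mathbb{K}_1)$, and the $W^{\beta,p}$ seminorm controls time-translations or mollification errors by $Ch^{\beta p}$.

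Your self-diagnosed weak point is accurate: the direct Riesz--Fr\'echet--Kolmogorov route in $L^p([0,T],\mathbb{K}_1)$ with $\mathbb{K}_1$ infinite-dimensional needs a tightness condition that an $L^p([0,T],\mathbb{K}_0)$ bound alone does not provide pointwise, so the mollification variant is indeed the cleaner way to close the argument, and it is correct. One small clarification worth making explicit: Arzel\`a--Ascoli only gives subsequential convergence of $u_n^\delta$, whereas you need the full sequence to go to zero. The missing observation is that for each fixed $t$ and $\delta$ the map $u\mapsto u^\delta(t)=\int\rho_\delta(t-s)u(s)\,ds$ is a bounded linear functional on $L^p([0,T],\mathbb{K}_0)$ with values in $\mathbb{K}_0$, so $u_n^\delta(t)\rightharpoonup 0$ weakly in $\mathbb{K}_0$; compactness of $\mathbb{K}_0\hookrightarrow\mathbb{K}$ then upgrades this to $u_n^\delta(t)\to 0$ in $\mathbb{K}$, and dominated convergence yields $u_n^\delta\to 0$ in $L^p([0,T],\mathbb{K})$ for the whole sequence. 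With that in hand, your final splitting via Ehrling goes through.
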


\section{Large deviation}

\subsection{Skeleton equations}

\indent As a first step we show that, for every $q=(f,g)\in\mathcal{S}$ the deterministic integral equation
\begin{eqnarray}\label{skeleton equations}
\widetilde{X}^{q}(t)=X_0-\kappa\int_0^t\widehat{A}\widetilde{X}^{q}(s)ds-\int_0^t\widehat{B}(\widetilde{X}^{q}(s),\widetilde{X}^{q}(s))ds
    +\int_0^t\widehat{F}(\widetilde{X}^{q}(s),s)ds\nonumber\\
+\int_0^t\widehat{G}(\widetilde{X}^{q}(s),s)f(s)ds+\int_0^t\int_{\mathbb{Z}}\widehat{\sigma}(s,\widetilde{X}^{q}(s),z)\big(g(s,z)-1\big)\nu(dz)ds
\end{eqnarray}
has a unique solution. That is

\begin{thm}\label{Theorem skeletons}
Fix $X_0\in\mathbb{W}$ , $q=(f,g)\in\mathcal{S}$. Suppose Condition \ref{condition 1} and \ref{condition 2} hold. Then there exists a unique $\widetilde{X}^q\in C([0,T],\mathbb{V})$ satisfy (\ref{skeleton equations}).
Moreover, for any $m\in\mathbb{N}$, there exists a constant $C_m$ such that
\begin{eqnarray}\label{Estimation 1W}
\sup_{q\in S^m}\sup_{s\in[0,T]}\|\widetilde{X}^q(s)\|_\mathbb{W}^2\leq C_m.
\end{eqnarray}
\end{thm}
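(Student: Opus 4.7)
I would follow a classical Galerkin scheme adapted to the second grade fluid structure. Let $\widetilde{X}^q_n(t)=\sum_{i=1}^n c_i^n(t)e_i$ denote the Galerkin approximation with the basis from (\ref{Basis})--(\ref{regularity of basis}). Projecting (\ref{skeleton equations}) onto $\mathbb{W}_n=\mathrm{span}\{e_1,\ldots,e_n\}$ produces a finite-dimensional integral equation whose drift is locally Lipschitz on $\mathbb{W}_n$ by Condition \ref{condition 1}, the continuity bounds (\ref{Eq B-01}), (\ref{A bound}), and Lemma \ref{H-lemma 1}; standard ODE theory then supplies a local solution, which the a priori estimates below extend to $[0,T]$.

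\textbf{Step 1: $\mathbb{V}$-estimate.} Testing against $\widetilde{X}^q_n$ in $(\cdot,\cdot)_\mathbb{V}$, the nonlinear term vanishes by (\ref{Eq B-03}) and $\kappa(\widehat{A}\widetilde{X}^q_n,\widetilde{X}^q_n)_\mathbb{V}=\kappa\|\widetilde{X}^q_n\|^2\geq 0$ by (\ref{A transform 02}) may be discarded. The Lipschitz/growth conditions (\ref{Condition F})--(\ref{Condition S2}), Young's inequality and the uniform-in-$q\in S^m$ bounds (\ref{H-1})--(\ref{H-2}) of Lemma \ref{H-lemma 1} control the $\widehat{F}$-, $\widehat{G}f$- and $\widehat{\sigma}(g-1)$-contributions by $C(m)(1+\|\widetilde{X}^q_n\|_\mathbb{V}^2)$. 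Gronwall then yields $\sup_n\sup_{0\leq t\leq T}\|\widetilde{X}^q_n(t)\|_\mathbb{V}^2\leq C_m$ for every $q\in S^m$.

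\textbf{Step 2: $\mathbb{W}$-estimate (main obstacle).} As emphasised in the introduction, $-\kappa\widehat{A}\widetilde{X}^q$ does not smooth in the $\mathbb{W}$-norm, so quadratic quantities cannot be absorbed through dissipation. The device I would adopt is to work with the generalised vorticity $\zeta_n:=\operatorname{curl}(\widetilde{X}^q_n-\alpha\Delta\widetilde{X}^q_n)$, whose $L^2$-norm is equivalent to $\|\widetilde{X}^q_n\|_\mathbb{W}$ by (\ref{W}). Applying $\operatorname{curl}\circ (I+\alpha A)$ to the Galerkin equation and testing with $\zeta_n$ in $L^2$ gives, after using (\ref{Ineq B 01}) and the $\mathbb{V}$-bound from Step~1, the key observation that the nonlinear contribution is at most $C(m)\|\widetilde{X}^q_n\|_\mathbb{W}^2$, that is, only linear in $\|\widetilde{X}^q_n\|_\mathbb{W}^2$. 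Controlling the $\widehat{F}$-, $\widehat{G}f$- and $\widehat{\sigma}(g-1)$-terms via (\ref{inverse operator transform}), Condition \ref{condition 2} and Lemma \ref{H-lemma 1}, one arrives at a differential inequality of the form
\[
\tfrac{d}{dt}\|\widetilde{X}^q_n\|_\mathbb{W}^2\leq C(m)\bigl(1+\|\widetilde{X}^q_n\|_\mathbb{W}^2\bigr),
\]
and Gronwall's lemma delivers (\ref{Estimation 1W}) uniformly in $n$ and $q\in S^m$.

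\textbf{Step 3: Passage to the limit and uniqueness.} Reading off the projected equation and using (\ref{Eq B-02}), (\ref{A bound}), the growth conditions and Lemma \ref{H-lemma 1}, I would estimate $\partial_t\widetilde{X}^q_n$ uniformly in $L^2(0,T;\mathbb{W}^*)$. Combined with the uniform $\mathbb{W}$-bound, Lemma \ref{H-lemma 4} (with $\mathbb{K}_0=\mathbb{W}$, $\mathbb{K}=\mathbb{V}$, $\mathbb{K}_1=\mathbb{W}^*$) gives compactness in $L^2(0,T;\mathbb{V})$; extracting a subsequence converging strongly in $L^2(0,T;\mathbb{V})$ and weak-$*$ in $L^\infty(0,T;\mathbb{W})$, the continuity bound (\ref{Eq B-01}) handles the nonlinear term and Lemma \ref{H-lemma 3} handles the $(g-1)$-integral; the $C([0,T],\mathbb{V})$-regularity of the limit follows from the integral equation. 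For uniqueness, set $Y=\widetilde{X}_1-\widetilde{X}_2$ and expand $\widehat{B}(\widetilde{X}_1,\widetilde{X}_1)-\widehat{B}(\widetilde{X}_2,\widetilde{X}_2)=\widehat{B}(Y,\widetilde{X}_1)+\widehat{B}(\widetilde{X}_2,Y)$; testing in $(\cdot,\cdot)_\mathbb{V}$, the second piece vanishes by (\ref{Eq B-03}) and the first is bounded by $C\|Y\|_\mathbb{V}^2\|\widetilde{X}_1\|_\mathbb{W}$, integrable thanks to (\ref{Estimation 1W}). The Lipschitz properties together with Lemma \ref{H-lemma 1} and Gronwall close the argument.
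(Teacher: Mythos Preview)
Your overall strategy---Galerkin approximation, a priori estimates, compactness via Lemma \ref{H-lemma 4}, and Gronwall-based uniqueness---matches the paper's. The discrepancy is in Step~2.

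Your appeal to (\ref{Ineq B 01}) to bound the nonlinear contribution in the $\mathbb{W}$-estimate by $C(m)\|\widetilde{X}^q_n\|_\mathbb{W}^2$ is not justified. Inequality (\ref{Ineq B 01}) controls the $\mathbb{L}^2$-pairing $\bigl(\operatorname{curl}(u-\alpha\Delta u)\times v,\,w\bigr)$ with a test \emph{vector} $w\in\mathbb{W}$; it says nothing about $(\widehat{B}(X_n,X_n),X_n)_\mathbb{W}$, nor about the scalar pairing $(\operatorname{curl}(\zeta_n e_3\times X_n),\zeta_n)_{L^2}$ that arises in your vorticity formulation. Moving the outer $\operatorname{curl}$ onto $\zeta_n$ by duality produces $\nabla^\perp\zeta_n$, which involves fourth-order derivatives of $X_n$ and is not controlled by $\|X_n\|_\mathbb{W}$. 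So the estimate you claim does not follow from (\ref{Ineq B 01}).

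What actually closes the $\mathbb{W}$-estimate---and what the paper invokes---is the \emph{exact cancellation}
\[
\bigl(\mathbb{P}_n\widehat{B}_n(X_n,X_n),\,X_n\bigr)_\mathbb{W}=0,
\]
equivalent in your vorticity language to $(X_n\cdot\nabla\zeta_n,\zeta_n)_{L^2}=0$ by incompressibility and the boundary condition on $X_n$; see (4.11) in \cite{SZZ} or (4.61) in \cite{RS-10}. This is the transport-of-potential-vorticity identity characteristic of second-grade fluids. With it the $\mathbb{W}$-energy identity closes directly and your preliminary $\mathbb{V}$-estimate in Step~1 becomes unnecessary. Once you replace the faulty justification in Step~2 by this cancellation, the remainder of your plan (time-derivative bound in $L^2(0,T;\mathbb{W}^*)\hookrightarrow W^{\alpha,2}(0,T;\mathbb{W}^*)$, compactness, limit passage, uniqueness) is correct and aligns with the paper, which differs only cosmetically by using a cutoff on $\widehat{B}$ to obtain a globally Lipschitz Galerkin system and by working directly in $W^{\alpha,2}$.
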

\begin{proof}
First, let $\Phi_n:\mathbb{R}\rightarrow[0,1]$ be a smooth function such that $\Phi_n(r)=1$ if $|r|\leq n$, $\Phi_n(r)=0$ if $|r|> n+1$. Set $\mathcal{X}_n(u)=\Phi_n(\|u\|_\mathbb{V}), u\in\mathbb{V}$. Let $\mathbb{P}_n$ be the projection operator from $\mathbb{V}$ to $\mathbb{V}$ defined as
\begin{eqnarray*}
\mathbb{P}_{n}u=\sum_{i=1}^n\big(u,e_i\big)_{\mathbb{W}}e_i,\ \ u\in\mathbb{V}.\\
\end{eqnarray*}
Set
\begin{eqnarray*}
\mathbb{P}_{n}\mathbb{V}:=\text{Span}\{e_1,\cdots,e_n\},
\end{eqnarray*}
$$\widehat{B}_n(u,u)=\mathcal{X}_n(u)\widehat{B}(u,u),\ \ u\in\mathbb{P}_n\mathbb{V}.$$
\indent (\ref{regularity of basis}) and Lemma \ref{Lem-B-01} implies that $\widehat{B}_n$ is a global Lipschitz operator from $\mathbb{P}_n\mathbb{V}$ into $\mathbb{P}_n\mathbb{V}$. Repeating the same arguments as in the proof of Theorem 4.1 in \cite{ZZ1}, there exists a unique $X_n\in C([0,T],\mathbb{P}_n\mathbb{V})$ satisfying the following auxiliary PDE:
\begin{eqnarray}\label{skeleton projections 1}
dX_n(t)&=&
-\kappa\mathbb{P}_n\widehat{A}X_n(t)dt-\mathbb{P}_n\widehat{B}_n\big(X_n(t),X_n(t)\big)dt+\mathbb{P}_n\widehat{F}(X_n(t),t)dt\nonumber\\
& &+\mathbb{P}_n\widehat{G}(X_n(t),t)f(t)dt+\mathbb{P}_n\int_{\mathbb{Z}}\widehat{\sigma}(t,X_n(t),z)\big(g(t,z)-1\big)\nu(dz)dt
\end{eqnarray}
with the initial value $X_n(0)=\mathbb{P}_{n}X_0\in\mathbb{W}$.\\
The next thing is to show that

\begin{eqnarray}\label{Estimation 2W}
\sup_{n}\sup_{s\in[0,T]}\|X_n(s)\|_\mathbb{W}^{2}\leq C<\infty,
\end{eqnarray}
and for $\alpha\in(0,\frac{1}{2})$
\begin{eqnarray}\label{Estimation 2}
\sup_{n}\|X_n\|_{\mathbb{W}^{\alpha,2}([0,T],\mathbb{W}^*)}\leq C_\alpha<\infty.
\end{eqnarray}

By (\ref{skeleton projections 1}), we have
\begin{eqnarray}\label{Estimation 2W eq 1}
d\big(X_n(t),e_i\big)_\mathbb{V}
&=&
-\kappa\big(\widehat{A}X_n(t),e_i\big)_\mathbb{V}dt
    -\big(\mathbb{P}_n\widehat{B}_n\big(X_n(t),X_n(t)\big),e_i\big)_{\mathbb{V}}dt+\big(\widehat{F}(X_n(t),t),e_i\big)_\mathbb{V}dt\nonumber\\
& &+(\widehat{G}(X_n(t),t)f(t),e_i)_\mathbb{V}dt
+\big(\int_{\mathbb{Z}}\widehat{\sigma}(t,X_n(t),z)\big(g(t,z)-1\big)\nu(dz),e_i\big)_\mathbb{V}dt,\nonumber\\
\end{eqnarray}
for $i=1,2,\cdots,n$.\\
\indent By (\ref{Basis}), multiplying both sides of the equation (\ref{Estimation 2W eq 1}) by $\lambda_i$, we get
\begin{eqnarray}\label{Estimation 2W eq 2}
d\big(X_n(t),e_i\big)_\mathbb{W}
&=&
-\kappa\big(\widehat{A}X_n(t),e_i\big)_\mathbb{W}dt
    -\big(\mathbb{P}_n\widehat{B}_n\big(X_n(t),X_n(t)\big),e_i\big)_{\mathbb{W}}dt+\big(\widehat{F}(X_n(t),t),e_i\big)_\mathbb{W}dt\nonumber\\
& &+\big(\widehat{G}(X_n(t),t)f(t),e_i\big)_\mathbb{W}dt
+\big(\int_{\mathbb{Z}}\widehat{\sigma}(t,X_n(t),z)\big(g(t,z)-1\big)\nu(dz),e_i\big)_\mathbb{W}dt,\nonumber
\end{eqnarray}
for $i=1,2,\cdots,n$.\\
\indent By a calculation of $(X_n(t),e_i)_\mathbb{W}^2$ and summing over $i$ from $1$ to $n$ yields
\begin{eqnarray}\label{Estimation 2W eq 3}
\|X_n(t)\|_\mathbb{W}^2
&=&
\|X_n(0)\|_\mathbb{W}^2-2\kappa\int_0^t(\widehat{A}X_n(s),X_n(s))_\mathbb{W}ds
    -2\int_0^t(\mathbb{P}_n\widehat{B}_n\big(X_n(s),X_n(s)\big),X_n(s))_{\mathbb{W}}ds\nonumber\\
& &
+2\int_0^t(\widehat{F}(X_n(s),s),X_n(s))_\mathbb{W}ds
+2\int_0^t(\widehat{G}(X_n(s),t)f(s),X_n(s))_\mathbb{W}ds\nonumber\\
& &
+2\int_0^t(\int_{\mathbb{Z}}\widehat{\sigma}(s,X_n(s),z)\big(g(s,z)-1\big)\nu(dz),X_n(s))_\mathbb{W}ds.
\end{eqnarray}
Noticing that (see (4.11) in \cite{SZZ}, (4.61) in \cite{RS-10})
\begin{eqnarray*}
\big(\mathbb{P}_n\widehat{B}_n\big(X_n(s),X_n(s)\big),X_n(s)\big)_{\mathbb{W}}=0,
\end{eqnarray*}
\begin{eqnarray}\label{curl}
\big|curl(v)|^2\leq\frac{2}{\alpha}\|v\|_\mathbb{V}^2 \text{ for any } v\in\mathbb{V},
\end{eqnarray}
and
\begin{eqnarray*}
(\widehat{A}X_n(s),X_n(s))_\mathbb{W}
=
\frac{1}{\alpha}\|X_n(s)\|_\mathbb{W}^2-\frac{1}{\alpha}\Big(curl\big(X_n(s)\big),curl\big(X_n(s)-\alpha\Delta X_n(s)\big)\Big),
\end{eqnarray*}
we have
\begin{eqnarray}\label{Estimation 2W eq 4}
& &\|X_n(t)\|_\mathbb{W}^2+\frac{2\kappa}{\alpha}\int_0^t\|X_n(s)\|_\mathbb{W}^2ds\nonumber\\
&=&
\|X_n(0)\|_\mathbb{W}^2+\frac{2\kappa}{\alpha}\int_0^t\Big(curl\big(X_n(s)\big),curl\big(X_n(s)-\alpha\Delta X_n(s)\big)\Big)ds\nonumber\\
& &
+2\int_0^t(\widehat{F}(X_n(s),s),X_n(s))_\mathbb{W}ds
+2\int_0^t(\widehat{G}(X_n(s),s)f(s),X_n(s))_\mathbb{W}ds\nonumber\\
& &
+2\int_0^t(\int_{\mathbb{Z}}\widehat{\sigma}(s,X_n(s),z)\big(g(s,z)-1\big)\nu(dz),X_n(s))_\mathbb{W}ds\\
&\leq&
\|X_0\|_\mathbb{W}^2+\frac{2\kappa}{\alpha}\int_0^t\|X_n(s)\|_\mathbb{W}\big|curl\big(X_n(s)\big)\big|ds\nonumber\\
& &
+2\int_0^t\|X_n(s)\|_\mathbb{W}\big|curl\big(F(X_n(s),s)\big)\big|ds\nonumber\\
& &
+2\int_0^t\|X_n(s)\|_\mathbb{W}\big|curl\big(G(X_n(s),s)f(s)\big)\big|ds\nonumber\\
& &
+2\int_0^t\|X_n(s)\|_\mathbb{W}\big|curl\big(\int_{\mathbb{Z}}{\sigma}(t,X_n(s),z)\big(g(s,z)-1\big)\nu(dz)\big)\big|ds\nonumber\\
&\leq&
\|X_0\|_\mathbb{W}^2+C\int_0^t\|X_n(s)\|_\mathbb{W}^2ds
+C\int_0^t\|X_n(s)\|_\mathbb{W}^2\big(1+|f(s)|^2\big)ds\nonumber\\
& &
+C\int_0^t\big(1+\|X_n(s)\|_\mathbb{W}^2\big)\int_{\mathbb{Z}}\|{\sigma}(s,,z)\|_{0,\mathbb{V}}|g(s,z)-1|\nu(dz)ds\nonumber\\
&\leq&
\|X_0\|_\mathbb{W}^2+C\int_0^t\int_{\mathbb{Z}}\|{\sigma}(s,,z)\|_{0,\mathbb{V}}|g(s,z)-1|\nu(dz)\big)ds\nonumber\\
& &
+C\int_0^t\|X_n(s)\|_\mathbb{W}^2\Big\{1+|f(s)|^2
+\int_{\mathbb{Z}}\|{\sigma}(s,,z)\|_{0,\mathbb{V}}|g(s,z)-1|\nu(dz)\Big\}ds,\nonumber
\end{eqnarray}
we have used Condition \ref{condition 1}.\\
\indent By Lemma \ref{H-lemma 1} and Gronwall's inequality, we get (\ref{Estimation 2W}).
\vskip 0.2cm
Now we proof (\ref{Estimation 2}). By (\ref{skeleton projections 1})
\begin{eqnarray*}
X_n(t)
&=&\mathbb{P}_n X_n(0)-\kappa\int_0^t\mathbb{P}_n\widehat{A}X_n(s)ds-\int_0^t\mathbb{P}_n\widehat{B}_n(X_n(s),X_n(s))ds\nonumber\\
& &+\int_0^t\mathbb{P}_n\widehat{F}(X_n(s),s)ds+\int_0^t\mathbb{P}_n\widehat{G}(X_n(s),s)f(s)ds\nonumber\\
& &+\int_0^t\int_\mathbb{Z}\mathbb{P}_n\widehat{\sigma}(s,X_n(s),z)(g(s,z)-1)\nu(dz)ds\nonumber\\
&:=&J_n^1+J_n^2(t)+J_n^3(t)+J_n^4(t)+J_n^5(t)+J_n^6(t).
\end{eqnarray*}
\indent Choose $\alpha\in(0,\frac{1}{2})$, using the similar arguments as the proof of Proposition 4.5 in Zhai and Zhang \cite{ZZ2}, we can obatin
\begin{eqnarray*}
& &\|J_n^1\|_{W^{\alpha,2}([0,T],\mathbb{V})}+\|J_n^2\|_{W^{\alpha,2}([0,T],{\mathbb{V}})}
+\|J_n^3\|_{W^{\alpha,2}([0,T],{\mathbb{W}}^\ast)}\nonumber\\
& &
+\|J_n^4\|_{W^{\alpha,2}([0,T],{\mathbb{V}})}+\|J_n^5\|_{W^{\alpha,2}([0,T],{\mathbb{V}})}\leq L_{\alpha}<\infty,
\end{eqnarray*}
here $L_{\alpha}$ depends on $\alpha$ and $C$ from (\ref{Estimation 2W}).\\
\indent Using the similar arguments as (4.20) in Zhai and Zhang \cite{ZZ1}, we can obtain
\begin{eqnarray*}
\|J_n^6\|_{W^{\alpha,2}([0,T],{\mathbb{W}}^\ast)}\leq L_{\alpha}<\infty.
\end{eqnarray*}
Combining above all inequalities, we obtain (\ref{Estimation 2}).
\vskip 0.2cm
\indent The estimates (\ref{Estimation 2W}) and (\ref{Estimation 2}) enable us to assert the existence of $X\in L^\infty([0,T],\mathbb{W})$ and a sub-sequence $X_{m'}$ such that, as $m'\rightarrow\infty$\\
\indent{1.}  $X_{m'}\rightarrow X$ weakly in $L^2([0,T],\mathbb{W})$,\\
\indent{2.}  $X_{m'}\rightarrow X$ in the weak-star topology of $L^\infty([0,T],\mathbb{W})$.\\
Lemma \ref{H-lemma 4} has been used to obtain claim 3:\\
\indent{3.}  $X_{m'}\rightarrow X$ strongly in $L^2([0,T],\mathbb{V})$.
\vskip 0.2cm
\indent Finally, we use the similar argument as in the proof of Theorem 4.1 in \cite{ZZ1}, we can conclude that $X$ is a solution of (\ref{skeleton equations}) and refer to Tenam \cite{Temam 1979} Chapter 3, $X\in C([0,T],\mathbb{V})$. (\ref{Estimation 2W}) also implies that
\begin{eqnarray*}
\sup_{q\in S^m}\sup_{t\in[0,T]}\|X(t)\|_\mathbb{W}^2\leq C_m<\infty.
\end{eqnarray*}
\indent (Uniqueness) Let us assume that $X$ and $Y$ are two solutions of (\ref{skeleton equations}), and let $Z=X-Y$. We have
\begin{eqnarray}
& &\|Z(t)\|_\mathbb{V}^2+2\kappa\int_0^t\|Z(s)\|^2ds\nonumber\\
&=&-2\int_0^t\big\langle\widehat{B}(X(s),X(s))-\widehat{B}(Y(s),Y(s)),X(s)-Y(s)\big\rangle_{{\mathbb{W}}^\ast,\mathbb{W}}ds\nonumber\\
& &+2\int_0^t\big(\widehat{F}(X(s),s)-\widehat{F}(Y(s),s),X(s)-Y(s)\big)_\mathbb{V}ds\nonumber\\
& &+2\int_0^t\big(\widehat{G}(X(s),s)f(s)-\widehat{G}((Y(s),s)f(s),X(s)-Y(s)\big)_\mathbb{V}ds\nonumber\\
& &+2\int_0^t\int_{\mathbb{Z}}\big(\widehat{\sigma}(s,X(s),z)(g(s,z)-1)-\widehat{\sigma}(s,Y(s),z)(g(s,z)-1),X(s)-Y(s)\big)_\mathbb{V}\nu(dz)ds\nonumber\\
&:=&I_1(t)+I_2(t)+I_3(t)+I_4(t).
\end{eqnarray}
By Lemma \ref{Lem-B-01}, we get
\begin{eqnarray}
|I_1(t)|
&\leq&
\int_0^t\big|\big\langle\widehat{B}(X(s)-Y(s),X(s)-Y(s)),X(s)\big\rangle_{{\mathbb{W}}^\ast,\mathbb{W}}\big|ds\nonumber\\
&\leq&
C_B\int_0^t\|Z(s)\|_\mathbb{V}^2\|X(s)\|_\mathbb{W}ds.
\end{eqnarray}
By Condition \ref{condition 1}, we get
\begin{eqnarray}
|I_2(t)|
&\leq&
2\int_0^t\|\widehat{F}(X(s),s)-\widehat{F}(Y(s),s)\|_\mathbb{V}\|X(s)-Y(s)\|_\mathbb{V}ds\nonumber\\
&\leq&
2C_F\int_0^t\|Z(s)\|_\mathbb{V}^2ds,
\end{eqnarray}
and
\begin{eqnarray}
|I_3(t)|
&\leq&
2\int_0^t\|\widehat{G}(X(s),s)-\widehat{G}(Y(s),s)\|_\mathbb{V}|f(s)|\|X(s)-Y(s)\|_\mathbb{V}ds\nonumber\\
&\leq&
2C_G\int_0^t\|Z(s)\|_\mathbb{V}^2(1+|f(s)|^2)ds.
\end{eqnarray}
For $I_4$, we have
\begin{eqnarray}
|I_4(t)|
&\leq&
2\int_0^t\int_\mathbb{Z}\|\widehat{\sigma}(s,X(s),z)-\widehat{\sigma}(s,Y(s),z)\|_\mathbb{V}|g(s,z)-1|\nu(dz)\|X(s)-Y(s)\|_\mathbb{V}ds\nonumber\\
&\leq&
2\int_0^t\int_\mathbb{Z}\|\sigma(s,z)\|_{1,\mathbb{V}}|g(s,z)-1|\nu(dz)\|Z(s)\|_\mathbb{V}^2ds.
\end{eqnarray}
Setting
\begin{eqnarray}
\varphi(s)=C_B\|X(s)\|_\mathbb{W}+2C_F+2C_G(1+|f(s)|^2)+2\int_\mathbb{Z}\|\sigma(s,z)\|_{1,\mathbb{V}}|g(s,z)-1|\nu(dz),
\end{eqnarray}
we have
\begin{eqnarray}
\|Z(t)\|_\mathbb{V}^2+2\kappa\int_0^t\|Z(s)\|^2ds\leq\int_0^t\varphi(s)\|Z(s)\|_\mathbb{V}^2ds.
\end{eqnarray}
By Lemma \ref{H-lemma 1} and Gronwall's equality, we can conclude $X=Y$.\\
The proof is complete.
\end{proof}

\subsection{The main result}

\indent We are now ready to state the main result. Recall that for $q=(f,g)\in\mathcal{S},\nu_T^g(dsdz)=g(s,z)\nu(dz)ds$, define
\begin{eqnarray}
\mathcal{G}^0\Big(\int_0^\cdot f(s)ds,\nu_T^g\Big)=\widetilde{X}^q\ \ \ \ \text{for }q=(f,g)\in\mathcal{S} \text{ as given in Theorem }\ref{Theorem skeletons}.
\end{eqnarray}
\indent The next theorem is contained in Theorem 3.2 in Shang, Zhai and Zhang \cite{SZZ}.
\begin{thm}\label{LDP Thm solution}
Assume Condition \ref{condition 1}, if $X_0\in\mathbb{W}$, there exists a unique $\mathbb{V}$-valued progressively measurable process $X^\e\in L^\infty([0,T],\mathbb{W})\cap D([0,T],\mathbb{V})$ such that for any $t>0$
\begin{eqnarray}\label{LDP origin eq}
X^\e(t)
&=&
X_0-\kappa\int_0^t\widehat{A}X^\e(s)ds-\int_0^t\widehat{B}(X^\e(s),X^\e(s))ds\nonumber\\
& &+\int_0^t\widehat{F}(X^\e(s),s)ds+\sqrt{\e}\int_0^t\widehat{G}(X^\e(s),s)dW(s)\\
& &+\e\int_0^t\int_{\mathbb{Z}}\widehat{\sigma}(s,X^\e(s-),z)\widetilde{N}^{\e^{-1}}(dzds)\ \text{ in }\mathbb{W}^\ast,  \ \ P-a.s.\nonumber
\end{eqnarray}
\end{thm}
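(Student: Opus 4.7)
The plan is to follow essentially the same Galerkin plus truncation scheme used to prove Theorem \ref{Theorem skeletons}, but now carry the stochastic integrals through. First I would fix the cut-off $\mathcal{X}_n(u)=\Phi_n(\|u\|_\mathbb{V})$ and the projections $\mathbb{P}_n$ exactly as in the skeleton proof, and study the finite-dimensional SDE on $\mathbb{P}_n\mathbb{V}$
\begin{align*}
dX_n^\e(t) &= -\kappa\mathbb{P}_n\widehat{A}X_n^\e(t)\,dt - \mathbb{P}_n\widehat{B}_n(X_n^\e(t),X_n^\e(t))\,dt + \mathbb{P}_n\widehat{F}(X_n^\e(t),t)\,dt \\
& \quad + \sqrt{\e}\mathbb{P}_n\widehat{G}(X_n^\e(t),t)\,dW(t) + \e\int_{\mathbb{Z}}\mathbb{P}_n\widehat{\sigma}(t,X_n^\e(t-),z)\,\widetilde{N}^{\e^{-1}}(dz\,dt),
\end{align*}
with $X_n^\e(0)=\mathbb{P}_n X_0$. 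Since $\widehat{B}_n$, $\widehat{F}$, $\widehat{G}$ and $\widehat\sigma$ satisfy global Lipschitz bounds on the finite-dimensional space $\mathbb{P}_n\mathbb{V}$ (using (\ref{regularity of basis}), Lemma \ref{Lem-B-01} and Condition \ref{condition 1}), classical existence and uniqueness theorems for SDEs driven by Brownian motion and Poisson random measures give a unique c\`adl\`ag solution $X_n^\e$.

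The next step is to derive $n$-independent a priori estimates in the $\mathbb{W}$-norm. Applying It\^o's formula to $\|X_n^\e(t)\|_\mathbb{W}^2$ (this is legitimate in $\mathbb{P}_n\mathbb{V}$) and using the orthogonality $(\mathbb{P}_n\widehat{B}_n(u,u),u)_\mathbb{W}=0$ together with the identity for $(\widehat{A}u,u)_\mathbb{W}$ appearing in (\ref{Estimation 2W eq 4}), I would reduce the estimate to the curl trick (\ref{curl}) and then bound the stochastic integral terms using Burkholder-Davis-Gundy for the Brownian integral and the corresponding Kunita inequality for the compensated Poisson integral, together with the $L^{2q}$-growth bound (\ref{Condition S2}) for $q=1,2$. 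The quadratic variation contributions of the Poisson part are controlled by $\e\int_0^t\int_{\mathbb{Z}}\|\widehat\sigma(s,X_n^\e(s),z)\|_\mathbb{W}^2\nu(dz)ds$, which is absorbed via Gronwall, yielding
\[
\sup_n E\Bigl[\sup_{t\in[0,T]}\|X_n^\e(t)\|_\mathbb{W}^2\Bigr] < \infty.
\]
A parallel fractional Sobolev estimate, handled piece by piece as for $J_n^1,\dots,J_n^6$ in the skeleton proof (the stochastic pieces $J_n^5,J_n^6$ treated as in Zhai-Zhang \cite{ZZ1,ZZ2}), gives a uniform bound in $W^{\alpha,2}([0,T],\mathbb{W}^*)$ for $\alpha\in(0,1/2)$.

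With these two bounds in hand, Lemma \ref{H-lemma 4} supplies tightness of the laws of $\{X_n^\e\}$ in $L^2([0,T],\mathbb{V})$. Combining with weak-$\ast$ compactness in $L^\infty([0,T],\mathbb{W})$ and the Jakubowski-Skorohod representation theorem (since $\mathbb{W}$, $\mathbb{V}$ are separable Hilbert spaces and the noise lives on a Polish space), I pass to a subsequential limit on a new probability space and identify the limit as a solution of (\ref{LDP origin eq}); the truncation is removed using the cut-off-independence of the $\mathbb{W}$-bound. Finally, uniqueness is obtained essentially as in the last part of the proof of Theorem \ref{Theorem skeletons}: applying It\^o's formula to $\|Z(t)\|_\mathbb{V}^2$ for the difference $Z=X^\e-Y^\e$ of two solutions, the antisymmetry (\ref{Eq B-03}) together with Lemma \ref{Lem-B-01} controls the nonlinear term by $C_B\|Z\|_\mathbb{V}^2\|X^\e\|_\mathbb{W}$, and Condition \ref{condition 1} handles the Lipschitz terms, so that Gronwall's inequality (combined with the integrability of $\|X^\e\|_\mathbb{W}$ from the a priori estimate) gives $Z\equiv 0$.

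The main obstacle is the $\mathbb{W}$-level a priori estimate: the linear term $-\kappa\widehat{A}X_n^\e$ does not smooth (it fails to dominate the $\mathbb{W}$-norm), so one has to rely on the curl identity and the assumption (\ref{Condition S2}) with the stronger exponent $q=2$ to absorb the fourth-moment contributions from the compensated Poisson integral after applying BDG/Kunita. Once this estimate is secured, the rest is a reasonably standard compactness-and-martingale argument.
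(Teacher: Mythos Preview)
The paper does not actually prove this theorem: the sentence immediately preceding the statement reads ``The next theorem is contained in Theorem 3.2 in Shang, Zhai and Zhang \cite{SZZ},'' and no further argument is given. So the paper's ``proof'' is a citation, not a demonstration.

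Your proposal, by contrast, outlines a full Galerkin--truncation argument with $\mathbb{W}$-level a priori estimates, fractional Sobolev bounds for tightness, a Jakubowski--Skorohod limit passage, and a Gronwall uniqueness step. This is presumably close to what \cite{SZZ} actually does, and the sketch is coherent: the curl identity and the cancellation $(\mathbb{P}_n\widehat{B}_n(u,u),u)_\mathbb{W}=0$ are indeed the key structural ingredients, and the use of (\ref{Condition S2}) with $q=2$ to control the fourth-moment Poisson terms is the right idea. One small remark: in the uniqueness step you need It\^o's formula for $\|Z(t)\|_\mathbb{V}^2$ with the stochastic integrals present (not just the deterministic chain rule), so the Brownian and Poisson martingale terms appear and must be handled via stopping times and expectation before Gronwall; this is routine but worth mentioning. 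In any case, for the purposes of matching the paper, the correct response is simply to cite \cite{SZZ}.
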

\vskip0.2cm
\indent Theorem \ref{LDP Thm solution} shows that the above equation admits a strong solution in the probabilistic sense. In particular, for every $\e>0$, there exists a measurable map $\mathcal{G}^\e:C([0,T],\mathbb{R})\times M_{FC}([0,T]\times\mathbb{Z})\rightarrow D([0,T],\mathbb{V})$ such that, for any Poisson random measures $n^{\e^{-1}}$ on $[0,T]\times\mathbb{Z}$ with intensity measure $\e^{-1}\lambda_T\otimes\nu$, $\mathcal{G}^\e(\sqrt{\e}W,\e n^{\e^{-1}})$ is the unique solution of (\ref{LDP origin eq}) with $\widetilde{N}^{\e^{-1}}$ replaced by $\widetilde{n}^{\e^{-1}}$.
\begin{thm}\label{Theorem LDP}
Suppose that Condition \ref{condition 1} and \ref{condition 2} hold. Then the family $\{X^\e\}_{\e>0}$ satisfies a large deviation principle on $D([0,T],\mathbb{V})$ with a good rate function $I:D([0,T],\mathbb{V})\rightarrow[0,\infty]$, defined by
\begin{align*}
I(\xi):=\inf\left\{Q_1(f)+Q_2(g):\, \xi=\mathcal{G}^0\Big(\int_0^\cdot f(s)ds,\nu_T^g\Big),\,
f\in S_1^m, g\in S_2^m \text{ and }m\in\mathbb{N} \right\}.
\end{align*}
\end{thm}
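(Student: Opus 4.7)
The plan is to apply the weak convergence criterion of Budhiraja, Chen and Dupuis \cite{BCD} for systems driven jointly by a Wiener process and a Poisson random measure. Writing $\mathcal{G}^\e$ for the solution map of (\ref{LDP origin eq}) and $\mathcal{G}^0$ for the solution map of (\ref{skeleton equations}), this framework reduces the LDP to verifying two hypotheses:
\textbf{(a) Compactness:} for each $m\in\mathbb{N}$, the level set $K_m=\{\mathcal{G}^0(\int_0^\cdot f(s)\,ds,\nu_T^g):(f,g)\in S^m\}$ is a compact subset of $D([0,T],\mathbb{V})$;
\textbf{(b) Weak convergence:} for every $m\in\mathbb{N}$ and any family of $S^m$-valued predictable controls $q_\e=(f_\e,g_\e)$ converging in distribution to $q=(f,g)\in S^m$, the controlled processes $\widetilde{X}^\e$ solving the Girsanov-shifted analogue of (\ref{LDP origin eq}) converge in distribution to $\widetilde{X}^q=\mathcal{G}^0(\int_0^\cdot f(s)\,ds,\nu_T^g)$ in $D([0,T],\mathbb{V})$.

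For (a), since $S^m$ is a compact metric space, it is enough to prove continuity of the map $q\mapsto\widetilde{X}^q$ from $S^m$ into $C([0,T],\mathbb{V})$. Given $q_n\to q$ in $S^m$, I would combine the uniform $\mathbb{W}$-bound (\ref{Estimation 1W}) with an analogue of the fractional estimate (\ref{Estimation 2}) on $\widetilde{X}^{q_n}$. Applying Lemma \ref{H-lemma 4} with $\mathbb{K}_0=\mathbb{W}$, $\mathbb{K}=\mathbb{V}$ and $\mathbb{K}_1=\mathbb{W}^*$, a subsequence converges strongly in $L^2([0,T],\mathbb{V})$, weakly in $L^2([0,T],\mathbb{W})$, and weak-$\ast$ in $L^\infty([0,T],\mathbb{W})$ to some limit $\widetilde{X}$. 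Passing to the limit in (\ref{skeleton equations}) uses: the bilinear bounds (\ref{Eq B-01})--(\ref{Eq B-02}) combined with strong $L^2([0,T],\mathbb{V})$ convergence for the $\widehat{B}$ term; weak $L^2$ convergence of $f_n$ coupled with strong convergence of $\widehat{G}(\widetilde{X}^{q_n})$ for the Wiener-type drift; and Lemma \ref{H-lemma 3} for the compensated Poisson drift. By uniqueness in Theorem \ref{Theorem skeletons}, the limit equals $\widetilde{X}^q$ and the whole sequence converges; promotion from $L^2$ to $C([0,T],\mathbb{V})$ follows by a standard Ascoli/energy argument using the time-regularity estimate.

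For (b), the pivotal ingredient is the uniform a priori bound
\[
\sup_{0<\e<\e_0}\mathbb{E}\!\left[\sup_{t\in[0,T]}\|\widetilde{X}^\e(t)\|_{\mathbb{W}}^{2}\right]<\infty,
\]
which is announced as Lemma \ref{LDP Lemma Estimates 1}. I would derive this via It\^o's formula on Galerkin approximations of $\|\widetilde{X}^\e(t)\|_\mathbb{W}^2$, exploiting the identity $(\widehat{A}u,u)_\mathbb{W}=\alpha^{-1}\|u\|_\mathbb{W}^2-\alpha^{-1}(\mathrm{curl}\,u,\mathrm{curl}(u-\alpha\Delta u))$, the cancellation $(\widehat{B}(u,u),u)_\mathbb{W}=0$ from Lemma \ref{Lem-B-01}, the Burkholder--Davis--Gundy inequality for the Wiener martingale, and Lemma \ref{H-lemma 1} for the controlled Poisson contribution, closing by Gronwall. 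A companion bound in $W^{\alpha,2}([0,T],\mathbb{W}^*)$ combined with Lemma \ref{H-lemma 4} yields tightness of $\{\widetilde{X}^\e\}$ on $L^2([0,T],\mathbb{V})\cap D([0,T],\mathbb{V})$. By Skorokhod representation, along a subsequence one may realize $(\widetilde{X}^\e,q_\e)\to(\widetilde{X}^0,q)$ almost surely on a new probability space; the $\sqrt{\e}$-scaled Wiener integral and the $\e$-scaled compensated Poisson integral then vanish in probability, while the drift terms pass to the limit exactly as in (a), identifying $\widetilde{X}^0=\widetilde{X}^q$ by uniqueness.

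The principal obstacle is the a priori $\mathbb{W}$-bound itself, since the linear term $-\kappa\Delta\widetilde{X}^\e$ does \emph{not} produce a dissipative $\|\cdot\|_\mathbb{W}$ contribution and cannot be used to absorb the nonlinearity as in the Navier--Stokes setting; one must genuinely rely on the curl reformulation, the algebraic structure of $\widehat{B}$, and the bounds of Lemmas \ref{Lem B} and \ref{Lem-B-01} to close the estimate. A secondary subtlety is the uniform-in-$\e$ treatment of the Poisson control $g_\e$, where Condition \ref{condition 2} and Lemma \ref{H-lemma 1} are indispensable for the a priori estimates and Lemma \ref{H-lemma 3} for passing to the limit in the compensated integral.
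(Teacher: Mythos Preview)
Your treatment of hypothesis (a) matches the paper's Proposition \ref{LDP 1} closely, including the promotion to $C([0,T],\mathbb{V})$ via a direct energy estimate on the difference $\widetilde{X}^{q_n}-\widetilde{X}^q$.

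For hypothesis (b), however, your route diverges from the paper and contains a genuine gap. You propose applying Skorokhod representation to $(\widetilde{X}^\e,q_\e)$ and then arguing that the stochastic integrals vanish on the new space. The problem is twofold. First, the $W^{\alpha,2}([0,T],\mathbb{W}^*)$ bound together with Lemma \ref{H-lemma 4} gives tightness only in $L^2([0,T],\mathbb{V})$, not in $D([0,T],\mathbb{V})$; an Aldous-type criterion would still be needed. Second, and more seriously, after passing to the new probability space the copies of $\widetilde{X}^\e$ no longer come equipped with a Brownian motion and Poisson random measure with respect to which they solve (\ref{LDP Girsanov eq}); one cannot simply say ``the $\sqrt{\e}$-scaled Wiener integral and the $\e$-scaled compensated Poisson integral vanish'' because these objects are not defined on the new space. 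Carrying the noise through Skorokhod and identifying the limit via a martingale-problem argument is possible but is substantially more work than you indicate.

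The paper bypasses both difficulties with a decomposition you do not mention: it writes $\widetilde{X}^\e=\widetilde{Z}^\e+\widetilde{Y}^\e$, where $\widetilde{Y}^\e$ solves the \emph{linear} equation (\ref{LDP Y eq}) carrying all the stochastic integrals, and proves directly (Lemma \ref{LDP Lemma Estimates 3}) that $E[\sup_t\|\widetilde{Y}^\e(t)\|_\mathbb{W}^2]\le C\e\to 0$. Skorokhod is then applied only to $\big((\psi_\e,\varphi_\e),\widetilde{Y}^\e\big)$, which is automatically tight; on the new space one \emph{defines} $\bar{Z}^\e$ as the unique solution of the pathwise deterministic equation (\ref{prop 2 equation 4}) driven by the realized data $(\psi_\e^1,\varphi_\e^1,\widetilde{Y}^\e_1)$, and checks by uniqueness that $\bar{Z}^\e+\widetilde{Y}^\e_1$ has the same law as $\widetilde{X}^\e$. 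Since $\widetilde{Y}^\e_1\to 0$ almost surely in $\sup$-$\mathbb{W}$ norm along a subsequence, the convergence $\bar{Z}^\e\to\widehat{X}$ in $C([0,T],\mathbb{V})$ becomes a purely deterministic argument identical in spirit to Proposition \ref{LDP 1}, with no stochastic integrals to handle. This splitting is the key missing idea in your proposal.
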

\begin{proof}
Theorem \ref{LDP Thm solution} implies that for
each $\e>0$  there exists a mapping $\mathcal{G}^\e$ such that
\begin{align*}
\mathcal{G}^\e\big(\sqrt{\epsilon} W,\e N^{\e^{-1}}\big)
\stackrel{\mathcal{D}}{=} X^\e,
\end{align*}
where $X^\e$ is the solution of (\ref{1.a}) and $\stackrel{\mathcal{D}}{=}$ denotes equality in distribution.\\
\indent Define for each $m\in\mathbb{N}$ a space of stochastic processes on $\Omega$ by
\begin{align*}
\mathcal{S}_1^m:=\{\psi\colon [0,T]\times
\Omega\to H:\, {\mathbb{F}}\text{-predictable and} \, \psi(\cdot,\omega)\in S_1^m
\text{ for $P$-a.a. $\omega\in \Omega$}\}.
\end{align*}
Let $(K_n)_{n\in\mathbb{N}}$ be a sequence of compact sets $K_n\subseteq \mathbb{Z}$
with $ K_n \nearrow \mathbb{Z}$.  For each $n\in\mathbb{N}$, let
\begin{align*}
     \mathcal{A}_{b,n}
= \Big\{\psi\in \mathcal{A}:
\psi(t,z,\omega)\in \begin{cases}
 [\tfrac{1}{n},n], &\text{if }z\in K_n,\\
\{1\}, &\text{if }z\in K_n^c.
\end{cases}
\text{ for all }(t,\omega)\in [0,T]\times \Omega
\Big\},
\end{align*}
and let $\mathcal{A}_{b}=\bigcup _{n=1}^\infty \mathcal{A}_{b,n}$. Define
for each $m\in\mathbb{N}$ a space of stochastic process on $\Omega$ by
\begin{align*}
\mathcal{S}_2^m:=\{\varphi\in  \mathcal{A}_{b}:\, \varphi(\cdot,\cdot,\omega)\in S_2^m
\text{ for $P$-a.a. $\omega\in \Omega$}\}.
\end{align*}
According to Theorem 2.4 in \cite{BCD}, our claim is established once we have proved:
\begin{enumerate}
\item[(C1)] if $(f_n)_{n\in\mathbb{N}}\subseteq S_1^m$ converges to $f\in S_1^m$
and $(g_n)_{n\in\mathbb{N}}\subseteq S_2^m$ converges to $g\in S_2^m$ for some $m\in\mathbb{N}$,
then
      $$
         \mathcal{G}^0\Big(\int_{0}^{\cdot}f_{n}(s)\,ds,\, \nu_T^{g_n}\Big)\rightarrow \mathcal{G}^0\Big(\int_{0}^{\cdot}f(s)\, ds,
         \nu_T^g\Big)\quad\text{in }C([0,T],\mathbb{V}).
      $$
\item[(C2)] if $(\psi_\e)_{\e>0}\subseteq \mathcal{S}_1^m$ converges in distribution to $\psi\in \mathcal{S}_1^m$
and $(\varphi_\e)_{\e> 0}\subseteq \mathcal{S}_2^m$ converges in distribution to $\varphi\in \mathcal{S}_2^m$,
then
    $$
         \mathcal{G}^\e\Big(\sqrt{\epsilon} {W}(\cdot)+\int_{0}^{\cdot}\psi_{\e}(s)\, ds,\, \epsilon
          N^{\e^{-1}\varphi_\e}\Big) \text{ converges in distribution to }
         \mathcal{G}^0\Big(\int_{0}^{\cdot}\psi(s)\,ds,\,\nu_T^\varphi\Big)\text{ in } D([0,T],\mathbb{V}).
      $$
\end{enumerate}

\indent We give the details of the proof in the next section. (C1) will be given by Proposition \ref{LDP 1}. (C2) will be established by Proposition \ref{LDP 2}.

\end{proof}

\subsection{The proofs}


\begin{prp}\label{LDP 1}
Fix $m\in\mathbb{N}$, and let $q_n=(f_n,g_n),q=(f,g)\in S^m$ be such that $q_n\rightarrow q$ as $n\rightarrow\infty$. Then
\begin{eqnarray*}
\mathcal{G}^0(\int_0^\cdot f_n(s)ds,\nu_T^{g_n})\rightarrow\mathcal{G}^0(\int_0^\cdot f(s)ds,\nu_T^{g})\ \ \text{in} \ \ C([0,T],\mathbb{V}).
\end{eqnarray*}
\end{prp}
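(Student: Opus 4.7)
\medskip

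\noindent\textbf{Plan.} Write $X_n := \widetilde{X}^{q_n}$ and $X := \widetilde{X}^q$. The strategy is a compactness/uniqueness argument: first extract a subsequential $L^2([0,T],\mathbb{V})$-limit $X^*$, then pass to the limit term-by-term in the skeleton equation to identify $X^* = X$, then upgrade convergence from $L^2([0,T],\mathbb{V})$ to $C([0,T],\mathbb{V})$ via Arzel\`a--Ascoli.

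\medskip

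\noindent\textbf{A priori bounds and compactness.} By Theorem \ref{Theorem skeletons}, $\sup_n \sup_{t\in[0,T]}\|X_n(t)\|_{\mathbb{W}}^2 \le C_m$. Repeating the decomposition $X_n = J_n^1+\cdots+J_n^6$ performed in the proof of that theorem (now for $X_n = \widetilde{X}^{q_n}$ rather than Galerkin approximations), together with the uniform $\mathbb{W}$-bound and Lemma \ref{H-lemma 1}, yields $\sup_n \|X_n\|_{W^{\alpha,2}([0,T],\mathbb{W}^*)}\le C_\alpha$ for any $\alpha\in(0,\tfrac12)$. Using the compact embedding $\mathbb{W}\hookrightarrow\mathbb{V}$, Lemma \ref{H-lemma 4} with $\mathbb{K}_0=\mathbb{W},\mathbb{K}=\mathbb{V},\mathbb{K}_1=\mathbb{W}^*$ gives relative compactness of $\{X_n\}$ in $L^2([0,T],\mathbb{V})$. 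Extract a subsequence (not relabeled) such that $X_n\to X^*$ strongly in $L^2([0,T],\mathbb{V})$, weakly in $L^2([0,T],\mathbb{W})$, and weak$^*$ in $L^\infty([0,T],\mathbb{W})$.

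\medskip

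\noindent\textbf{Passing to the limit.} Fix $t\in[0,T]$; the goal is to show $X^*(t)$ solves (\ref{skeleton equations}) with data $q=(f,g)$. The $-\kappa\widehat{A}$ term passes by weak $L^2([0,T],\mathbb{W})$-convergence since $\widehat{A}$ is bounded. For $\widehat{B}$, writing $\widehat{B}(X_n,X_n)-\widehat{B}(X^*,X^*) = \widehat{B}(X_n-X^*,X_n) + \widehat{B}(X^*,X_n-X^*)$ and using (\ref{Eq B-01}) together with $\sup_n\|X_n\|_{L^\infty(\mathbb{W})}<\infty$ and $X_n\to X^*$ in $L^2([0,T],\mathbb{V})$, one gets convergence in $L^2([0,T],\mathbb{W}^*)$. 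The $\widehat{F}$-term is handled by the Lipschitz bound (\ref{Condition F}). For the drift in $f$, split
\[
\widehat{G}(X_n,s)f_n(s) - \widehat{G}(X^*,s)f(s)
= \bigl(\widehat{G}(X_n,s)-\widehat{G}(X^*,s)\bigr)f_n(s) + \widehat{G}(X^*,s)\bigl(f_n(s)-f(s)\bigr);
\]
the first piece is controlled by $C_G^{1/2}\|X_n-X^*\|_{L^2(\mathbb{V})}\|f_n\|_{L^2}$ with $\|f_n\|_{L^2}\le\sqrt{2m}$, while the second vanishes by weak $L^2$-convergence $f_n\to f$ applied to $\widehat{G}(X^*(\cdot),\cdot)\in L^2([0,T],\mathbb{V})$. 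The Poisson-drift term is split analogously as
\[
\bigl(\widehat{\sigma}(s,X_n,z)-\widehat{\sigma}(s,X^*,z)\bigr)\bigl(g_n-1\bigr) + \widehat{\sigma}(s,X^*,z)\bigl(g_n-g\bigr);
\]
the first piece is controlled using $\|\sigma(s,z)\|_{1,\mathbb{V}}$, the bound (\ref{H-2}), and strong $L^2([0,T],\mathbb{V})$-convergence of $X_n$; the second is treated, for each continuous linear functional on $\mathbb{V}$, via Lemma \ref{H-lemma 3} applied to $h(s,z):=\langle\widehat{\sigma}(s,X^*(s),z),\cdot\rangle_{\mathbb{V}}$, whose $L^2$ and exponential integrability requirements follow from Condition \ref{condition 2} and Remark \ref{remark}. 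Hence $X^*$ satisfies (\ref{skeleton equations}) with $q=(f,g)$, and by uniqueness (Theorem \ref{Theorem skeletons}) $X^* = X$. Since every subsequence has a further subsequence converging to the same $X$, the whole sequence satisfies $X_n\to X$ in $L^2([0,T],\mathbb{V})$.

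\medskip

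\noindent\textbf{Upgrade to $C([0,T],\mathbb{V})$.} The uniform bounds give $\sup_n\|X_n(t)\|_{\mathbb{W}}\le C_m^{1/2}$ pointwise, and a direct estimation of each term in the integral equation (using Lemma \ref{Lem-B-01}, Condition \ref{condition 1}, Lemma \ref{H-lemma 1}, and H\"older) yields a modulus of continuity $\|X_n(t)-X_n(s)\|_{\mathbb{W}^*}\le \omega(|t-s|)$ with $\omega$ independent of $n$. The Gelfand-triple interpolation $\|v\|_{\mathbb{V}}^2\le\|v\|_{\mathbb{W}}\|v\|_{\mathbb{W}^*}$ then provides a uniform modulus of continuity of $X_n$ in $\mathbb{V}$. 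Combined with pointwise relative compactness in $\mathbb{V}$ (from the uniform $\mathbb{W}$-bound and compact embedding), Arzel\`a--Ascoli yields relative compactness of $\{X_n\}$ in $C([0,T],\mathbb{V})$. Any $C([0,T],\mathbb{V})$-limit point coincides with $X$ by the established $L^2$-convergence, so $X_n\to X$ in $C([0,T],\mathbb{V})$.

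\medskip

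\noindent\textbf{Main obstacle.} The most delicate point is the convergence of the Poisson-drift term: the cross contribution $\widehat{\sigma}(X^*)(g_n-g)$ only converges under the topology of $S_2^m$ described in Lemma \ref{H-lemma 3}, and the vector-valued nature of $\widehat{\sigma}$ requires applying the lemma functional-by-functional (or to a countable dense subset of $\mathbb{V}^*$) while verifying the hypotheses from Condition \ref{condition 2} via Remark \ref{remark}.
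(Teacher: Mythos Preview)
Your compactness extraction and limit identification (Steps 1--2) match the paper's proof essentially line-for-line: same a priori $\mathbb{W}$-bound from Theorem~\ref{Theorem skeletons}, same $W^{\alpha,2}([0,T],\mathbb{W}^*)$ estimate, same invocation of Lemma~\ref{H-lemma 4}, and the same splitting of the $\widehat{G}f_n$ and $\widehat{\sigma}(g_n-1)$ drifts into a Lipschitz piece (handled by strong $L^2(\mathbb{V})$ convergence of $X_n$) plus a weak-convergence piece (handled respectively by $f_n\rightharpoonup f$ in $L^2$ and Lemma~\ref{H-lemma 3}). One small imprecision: your claim that $\widehat{B}(X_n,X_n)\to\widehat{B}(X^*,X^*)$ \emph{strongly} in $L^2([0,T],\mathbb{W}^*)$ via the decomposition $\widehat{B}(X_n-X^*,X_n)+\widehat{B}(X^*,X_n-X^*)$ is not quite right for the first summand, since (\ref{Eq B-01}) only gives $\|\widehat{B}(X_n-X^*,X_n)\|_{\mathbb{W}^*}\le C\|X_n-X^*\|_\mathbb{W}\|X_n\|_\mathbb{V}$ and you do not have strong $\mathbb{W}$-convergence. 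The fix is standard: test against a fixed $\psi(t)e_j$ as the paper does, and use (\ref{Eq B-02}) on $\widehat{B}(X_n-X^*,X_n-X^*)$ together with (\ref{Eq B-01}) on the two cross terms, one of which requires only weak $L^2(\mathbb{W})$ convergence.

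Where you genuinely diverge from the paper is in the upgrade to $C([0,T],\mathbb{V})$. The paper proceeds by a direct Gronwall energy estimate on $Z_{m'}=X_{m'}-X$: differentiating $\|Z_{m'}(t)\|_\mathbb{V}^2$, estimating the four resulting terms via Lemma~\ref{Lem-B-01} and Condition~\ref{condition 1}, and arriving at
\[
\sup_{t\in[0,T]}\|Z_{m'}(t)\|_\mathbb{V}^2 \le \bigl(\Upsilon_{m'}^1(T)+\Upsilon_{m'}^2(T)\bigr)\exp\Bigl\{\sup_{m'}\int_0^T\Psi_{m'}(s)\,ds\Bigr\},
\]
where $\Upsilon_{m'}^i\to 0$ by the $L^2(\mathbb{V})$ convergence already established. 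Your route via Arzel\`a--Ascoli is a legitimate alternative: the equicontinuity in $\mathbb{W}^*$ does follow term-by-term (the $\widehat{\sigma}(g_n-1)$ term needs the uniform-in-$g$ small-interval bound (\ref{H-3}), which you should cite explicitly), the interpolation $\|v\|_\mathbb{V}^2\le\|v\|_{\mathbb{W}^*}\|v\|_\mathbb{W}$ is correct in this Gelfand triple, and pointwise precompactness comes from the compact embedding $\mathbb{W}\hookrightarrow\mathbb{V}$. The paper's argument is more quantitative and self-contained; yours is softer but avoids re-deriving the energy identity for the difference.
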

\begin{proof}
Recall $\mathcal{G}^0(\int_0^\cdot f_n(s)ds,\nu_T^{g_n})=\widetilde{X}^{q_n}$. For simplicity we denote $X_n=\widetilde{X}^{q_n}$.
Using similar arguments as (\ref{Estimation 2W}), (\ref{Estimation 2}), we can prove that there exists $C_{m}$ and $C_{\alpha,m}$ such that
\begin{eqnarray}\label{LDP 1 eatimation 1}
\sup_{n}\sup_{s\in[0,T]}\|X_n(s)\|_\mathbb{W}^{2}\leq C_{m},
\end{eqnarray}
and for $\alpha\in(0,\frac{1}{2})$
\begin{eqnarray*}
\sup_{n}\|X_n\|_{\mathbb{W}^{\alpha,2}([0,T],\mathbb{W}^*)}\leq C_{\alpha,m}.
\end{eqnarray*}
Hence, we can assert the existence of an element $X\in L^2([0,T],\mathbb{W})\cap L^\infty([0,T],\mathbb{V})$ and a sub-sequence $X_{m'}$ such that, as $m'\rightarrow\infty$\\
\indent (a) $\sup_{s\in[0,T]}\|X(s)\|_\mathbb{W}^{2}\leq C_{m}$,\\
\indent (b) $X_{m'}\rightarrow X$ in $L^2([0,T],\mathbb{W})$ weakly,\\
\indent (c) $X_{m'}\rightarrow X$ in $L^\infty([0,T],\mathbb{V})$ weak-star.\\
Combining Lemma \ref{H-lemma 4}, we have\\
\indent (d) $X_{m'}\rightarrow X$ in $L^2([0,T],\mathbb{V})$ strongly.
\vskip 0.2cm
We will prove $X=\widetilde{X}^q$.\\
\indent Let $\psi$ be a differentiable function on $[0,T]$ with $\psi(T)=0$. We multiply $X_{m'}(t)$ by $\psi(t)e_j$, and then integrate by parts to obtain
\begin{eqnarray}\label{Condition A 1}
& &-\int_0^T\big(X_{m'}(t),\psi'(t)e_j\big)_\mathbb{V}dt+\kappa\int_0^T\big(\big(X_{m'}(t),\psi(t)e_j\big)\big)dt\nonumber\\
&=&
\big(X_0,\psi(0)e_j\big)_\mathbb{V}-\int_0^T\big\langle\widehat{B}(X_{m'}(t),X_{m'}(t)),\psi(t)e_j\big\rangle_{\mathbb{W}^\ast,\mathbb{W}}dt\nonumber\\
& &+\int_0^T\big(\widehat{F}(X_{m'}(t),t),\psi(t)e_j\big)_\mathbb{V}dt+\int_0^T\big(\widehat{G}(X_{m'}(t),t)f_{m'}(t),\psi(t)e_j\big)_\mathbb{V}dt\nonumber\\
& &+\int_0^T\big(\int_{\mathbb{Z}}\widehat{\sigma}(t,X_{m'}(t),z)(g_{m'}(t,z)-1)\nu(dz),\psi(t)e_j\big)_\mathbb{V}dt.
\end{eqnarray}
Set
\begin{eqnarray*}
J^1_{m'}(T)=\int_0^T\big(\widehat{G}(X_{m'}(t),t)f_{m'}(t),\psi(t)e_j\big)_\mathbb{V}dt,
\end{eqnarray*}
\begin{eqnarray*}
J^2_{m'}(T)=\int_0^T\big(\widehat{G}(X(t),t)f_{m'}(t),\psi(t)e_j\big)_\mathbb{V}dt,
\end{eqnarray*}
\begin{eqnarray*}
J(T)=\int_0^T\big(\widehat{G}(X(t),t)f(t),\psi(t)e_j\big)_\mathbb{V}dt.
\end{eqnarray*}
Using $X_{m'}\rightarrow X$ in $L^2([0,T],\mathbb{V})$ strongly, we can easily get
\begin{eqnarray}\label{condition 1 limit 1}
\lim_{{m'}\rightarrow\infty}\big(J^1_{m'}(T)-J^2_{m'}(T)\big)=0.
\end{eqnarray}
Since $f_n\rightarrow f$ in $S_1^m$ and the linear mapping $: h\mapsto\int_0^T\big(\widehat{G}(X(t),t)h(t),\psi(t)e_j\big)_\mathbb{V}dt$ is strong continuous, we have
\begin{eqnarray}\label{condition 1 limit 2}
\lim_{{m'}\rightarrow\infty}\big(J^2_{m'}(T)-J(T)\big)=0.
\end{eqnarray}
Combining (\ref{condition 1 limit 1}), (\ref{condition 1 limit 2}), we get
\begin{eqnarray}\label{condition 1 limit 3}
\lim_{{m'}\rightarrow\infty}\big(J^1_{m'}(T)-J(T)\big)=0.
\end{eqnarray}

Set
\begin{eqnarray*}
I^1_{m'}(T)=\int_0^T\big(\int_{\mathbb{Z}}\widehat{\sigma}(t,X_{m'}(t),z)(g_{m'}(t,z)-1)\nu(dz),\psi(t)e_j\big)_\mathbb{V}dt,
\end{eqnarray*}
\begin{eqnarray*}
I^2_{m'}(T)=\int_0^T\big(\int_{\mathbb{Z}}\widehat{\sigma}(t,X(t),z)(g_{m'}(t,z)-1)\nu(dz),\psi(t)e_j\big)_\mathbb{V}dt,
\end{eqnarray*}
\begin{eqnarray*}
I(T)=\int_0^T\big(\int_{\mathbb{Z}}\widehat{\sigma}(t,X(t),z)(g(t,z)-1)\nu(dz),\psi(t)e_j\big)_\mathbb{V}dt.
\end{eqnarray*}
Similarly as the proof of (4.25)( see (4.26) and (4.29)) in Zhai and Zhang \cite{ZZ1}, we can get
\begin{eqnarray*}
\lim_{{m'}\rightarrow\infty}\sup_{h\in S_2^m}\int_0^T\int_{\mathbb{Z}}\|\widehat{\sigma}(t,X_{m'}(t),z)(h(t,z)-1)-\widehat{\sigma}(t,X(t),z)(h(t,z)-1)\|_{\mathbb{V}}\nu(dz)dt=0.
\end{eqnarray*}
Then, we can easily get
\begin{eqnarray}\label{condition 1 limit 4}
\lim_{{m'}\rightarrow\infty}\big(I^1_{m'}(T)-I^2_{m'}(T)\big)=0.
\end{eqnarray}
We know that
\begin{eqnarray*}
\|\widehat{\sigma}(t,X(t),z)\|_\mathbb{V}
\leq
\big(1+\sup_{s\in[0,T]}\|X(s)\|_\mathbb{V}\big)\|\sigma(t,z)\|_{0,\mathbb{V}}
\leq
C\|\sigma(t,z)\|_{0,\mathbb{V}},
\end{eqnarray*}
combining this with Remark \ref{remark}, we now get from Lemma \ref{H-lemma 3} that
\begin{eqnarray}\label{condition 1 limit 5}
\lim_{{m'}\rightarrow\infty}\big(I^2_{m'}(T)-I(T)\big)=0.
\end{eqnarray}
Combining (\ref{condition 1 limit 4}),(\ref{condition 1 limit 5}), we obtain
\begin{eqnarray}\label{condition 1 limit 6}
\lim_{{m'}\rightarrow\infty}\big(I^1_{m'}(T)-I(T)\big)=0.
\end{eqnarray}
By (\ref{Condition A 1}), (\ref{condition 1 limit 3}), (\ref{condition 1 limit 6}) and claim (a),(b),(c),(d), we can prove that $X$ satisfies
\begin{eqnarray}\label{Condition A 2}
& &-\int_0^T\big(X(t),\psi'(t)e_j\big)_\mathbb{V}dt+\kappa\int_0^T\big(\big(X(t),\psi(t)e_j\big)\big)dt\nonumber\\
&=&
\big(X_0,\psi(0)e_j\big)_\mathbb{V}-\int_0^T\big\langle\widehat{B}(X(t),X(t)),\psi(t)e_j\big\rangle_{\mathbb{W}^\ast,\mathbb{W}}dt\nonumber\\
& &+\int_0^T\big(\widehat{F}(X(t),t),\psi(t)e_j\big)_\mathbb{V}dt+\int_0^T\big(\widehat{G}(X(t),t)f(t),\psi(t)e_j\big)_\mathbb{V}dt\nonumber\\
& &+\int_0^T\big(\int_{\mathbb{Z}}\widehat{\sigma}(t,X(t),z)(g(t,z)-1)\nu(dz),\psi(t)e_j\big)_\mathbb{V}dt,
\end{eqnarray}
and using the same argument as in the proof of Theorem 3.1 in Temam \cite{Temam 1979}, Section 3, Chapter 3, we can conclude $X=\widetilde{X}^q$.

\vskip0.2cm

\indent Next, we prove $X_{m'}\rightarrow X$ in $C([0,T],\mathbb{V})$. Let $Z_{m'}=X_{m'}-X$. Then
\begin{eqnarray}\label{Condition A limit 0}
& &d\big\|Z_{m'}(s)\big\|_\mathbb{V}^2/ds+2\kappa\big\|Z_{m'}(s)\big\|^2\nonumber\\
&=&
-2\big\langle\widehat{B}(X_{m'}(s),X_{m'}(s))-\widehat{B}(X(s),X(s)),X_{m'}(s)-X(s)\big\rangle_{\mathbb{W}\ast,\mathbb{W}}\nonumber\\
& &+2\big(\widehat{F}(X_{m'}(s),s)-\widehat{F}(X(s),s),X_{m'}(s)-X(s)\big)_\mathbb{V}\nonumber\\
& &+2\big(\widehat{G}(X_{m'}(s),s)f_{m'}(s)-\widehat{G}(X(s),s)f(s),X_{m'}(s)-X(s)\big)_\mathbb{V}\nonumber\\
& &+2\int_\mathbb{Z}\big(\widehat{\sigma}(s,X_{m'}(s),z)(g_{m'}(s,z)-1)-\widehat{\sigma}(s,X(s),z)(g(s,z)-1),X_{m'}(s)-X(s)\big)_\mathbb{V}\nu(dz)\nonumber\\
&:=&I^1_{m'}(s)+I^2_{m'}(s)+I^3_{m'}(s)+I^4_{m'}(s).
\end{eqnarray}
By Lemma \ref{Lem-B-01} and claim (a), we have
\begin{eqnarray}\label{Condition A limit 1}
|I^1_{m'}(s)|
&\leq&
2\big|\big\langle\widehat{B}(X_{m'}(s)-X(s),X_{m'}(s)-X(s)),X(s)\big\rangle\big|\nonumber\\
&\leq&
C_B\|X_{m'}(s)-X(s)\|_\mathbb{V}^2\|X(s)\|_\mathbb{W}\nonumber\\
&\leq&
C\|X_{m'}(s)-X(s)\|_\mathbb{V}^2.
\end{eqnarray}
By Condition \ref{condition 1}, we have
\begin{eqnarray}\label{Condition A limit 2}
|I^2_{m'}(s)|
&\leq&
2\|\widehat{F}(X_{m'}(s),s)-\widehat{F}(X(s),s)\|_\mathbb{V}\|X_{m'}(s)-X(s)\|_\mathbb{V}\nonumber\\
&\leq&
C_F\|X_{m'}(s)-X(s)\|_\mathbb{V}^2,
\end{eqnarray}
and
\begin{eqnarray}\label{Condition A limit 3}
\int_0^T|I^3_{m'}(s)|ds
&\leq&
2\int_0^T\big|\big(\widehat{G}(X_{m'}(s),s)f_{m'}(s)-\widehat{G}(X(s),s)f_{m'}(s),X_{m'}(s)-X(s)\big)_\mathbb{V}\big|ds\nonumber\\
& &+2\int_0^T\big|\big(\widehat{G}(X(s),s)f_{m'}(s)-\widehat{G}(X(s),s)f(s),X_{m'}(s)-X(s)\big)_\mathbb{V}\big|ds\nonumber\\
&\leq&
C_G\int_0^T|f_{m'}(s)|\|X_{m'}(s)-X(s)\|_\mathbb{V}^2ds\nonumber\\
& &+C_G\int_0^T|f_{m'}(s)-f(s)|\|X_{m'}(s)-X(s)\|_\mathbb{V}\|X(s)\|_\mathbb{V}ds\nonumber\\
&\leq&
C_G\int_0^T(1+|f_{m'}(s)|^2)|\|X_{m'}(s)-X(s)\|_\mathbb{V}^2ds+\Upsilon_{m'}^1(T),
\end{eqnarray}
where $\Upsilon_{m'}^1(T)=C_G\big(\sup_{s\in[0,T]}\|X(s)\|_\mathbb{V}\big)\big(\int_0^T|f_{m'}(s)-f(s)|^2ds\big)^{\frac{1}{2}}
\big(\int_0^T\|X_{m'}(s)-X(s)\|_\mathbb{V}^2ds\big)^{\frac{1}{2}}$, claim (d) and $f_{m'},f\in S_1^m$ imply that
\begin{eqnarray}\label{Condition A limit g1}
\lim_{m'\rightarrow\infty}\Upsilon_{m'}^1(T)=0.
\end{eqnarray}
For $I^4_{m'}(s)$, we have
\begin{eqnarray}\label{Condition A limit 4}
& &\int_0^TI^4_{m'}(s)ds\nonumber\\
&\leq&
2\int_0^T\int_{\mathbb{Z}}\|\sigma(s,X_{m'}(s),z)\|_\mathbb{V}|g_{m'}(s,z)-1|\|X_{m'}(s)-X(s)\|_\mathbb{V}\nu(dz)ds\nonumber\\
& &+2\int_0^T\int_{\mathbb{Z}}\|\sigma(s,X(s),z)\|_\mathbb{V}|g(s,z)-1|\|X_{m'}(s)-X(s)\|_\mathbb{V}\nu(dz)ds\nonumber\\
&\leq&
2\int_0^T\int_{\mathbb{Z}}\|\sigma(s,z)\|_{0,\mathbb{V}}(1+\|X_{m'}(s)\|_\mathbb{V})|g_{m'}(s,z)-1|\|X_{m'}(s)-X(s)\|_\mathbb{V}\nu(dz)ds\nonumber\\
& &+2\int_0^T\int_{\mathbb{Z}}\|\sigma(s,z)\|_{0,\mathbb{V}}(1+\|X(s)\|_\mathbb{V})|g(s,z)-1|\|X_{m'}(s)-X(s)\|_\mathbb{V}\nu(dz)ds\nonumber\\
&\leq&
2(1+C_m)\int_0^T\int_{\mathbb{Z}}\|\sigma(t,z)\|_{0,\mathbb{V}}|g_{m'}(s,z)-1|\|X_{m'}(s)-X(s)\|_\mathbb{V}\nu(dz)ds\nonumber\\
& &+2(1+C_m)\int_0^T\int_{\mathbb{Z}}\|\sigma(s,z)\|_{0,\mathbb{V}}|g(s,z)-1|\|X_{m'}(s)-X(s)\|_\mathbb{V}\nu(dz)ds\nonumber\\
&:=&
\Upsilon_{m'}^2(T).
\end{eqnarray}
Together with (\ref{Condition A limit 0}), (\ref{Condition A limit 1}), (\ref{Condition A limit 2}), (\ref{Condition A limit 3}), (\ref{Condition A limit 4}), we obtain
\begin{eqnarray}\label{Condition A limit}
\|Z_{m'}(t)\|_\mathbb{V}^2+\kappa\int_0^t\|Z_{m'}(s)\|_\mathbb{V}^2ds
&\leq&
\int_0^t\Psi_{m'}(s)\|Z_{m'}(s)\|_\mathbb{V}^2ds+\Upsilon_{m'}^1(T)+\Upsilon_{m'}^2(T),
\end{eqnarray}
where $\Psi_{m'}(s)=C+C(1+f_{m'}^2(s))$, satisfying $\sup_{m'}\int_0^T\Psi_{m'}(s)ds<\infty.$ \\
Then by Gronwall's inequality, we get
\begin{eqnarray*}
\sup_{t\in[0,T]}\|Z_{m'}(t)\|_\mathbb{V}^2
\leq
(\Upsilon_{m'}^1(T)+\Upsilon_{m'}^2(T))\text{exp}\{\sup_{m'}\int_0^T\Psi_{m'}(s)ds\}.
\end{eqnarray*}
By (4.29) in Zhai and Zhang \cite{ZZ1}, we can easily get $\Upsilon_{m'}^2(T)\rightarrow0$ as $m'\rightarrow\infty$, combining (\ref{Condition A limit g1}), we get
\begin{eqnarray*}
\lim_{m'\rightarrow\infty}\sup_{t\in[0,T]}\|Z_{m'}(t)\|_\mathbb{V}^2=0.
\end{eqnarray*}
The proof is completed.
\end{proof}

\vskip 0.2cm

\indent Let $\phi_\e=(\psi_\e,\varphi_\e)\in\mathcal{S}_1^m\times\mathcal{S}_2^m$ and $\vartheta_\e=\frac{1}{\varphi_\e}$. The following lemma was stated in Budhiraja, Dupuis and Maroulas \cite{BDM}( see Lemma 2.3 there).
\begin{lem}\label{LDP Girsanov}
\begin{eqnarray*}
\mathcal{E}_t^\e(\vartheta_\e)
&:=&
\exp\Big\{\int_{[0,t]\times\mathbb{Z}\times[0,\e^{-1}\varphi_\e]}\log\big(\vartheta_\e(s,z)\big)N(ds,dz,dr)\nonumber\\
& &\qquad+\int_{[0,t]\times\mathbb{Z}\times[0,\e^{-1}\varphi_\e]}\big(-\vartheta_\e(s,z)+1\big)ds\nu(dz)dr\Big\}
\end{eqnarray*}
and
\begin{eqnarray*}
\bar{\mathcal{E}}_t^\e(\psi_\e):=\exp\Big\{\frac{1}{\sqrt{\e}}\int_0^t\psi_\e(s)dW(s)-\frac{1}{2\e}\int_0^t\|\psi_\e(s)\|^2ds\Big\}
\end{eqnarray*}
are $\mathbb{F}$-martingales. Set
\begin{eqnarray*}
\tilde{\mathcal{E}}_t^\e(\psi_\e,\vartheta_\e):=\bar{\mathcal{E}}_t^\e(\psi_\e)\mathcal{E}_t^\e(\vartheta_\e).
\end{eqnarray*}
Then
\begin{eqnarray*}
Q_t^\e(G)=\int_{G}{\tilde{\mathcal{E}}}_t^\e(\psi_\e,\vartheta_\e)d{P} \ \ \text{for  }G\in\mathcal{B}(C([0,T],\mathbb{R})\otimes \mathcal{T}\big(M_{FC}([0,T]\times\mathbb{Z})\big)
\end{eqnarray*}
defines a probability measure on $C([0,T],\mathbb{R})\times M_{FC}([0,T]\times\mathbb{Z})$.
\end{lem}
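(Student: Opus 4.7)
The plan is to verify separately that each of the two factors $\bar{\mathcal{E}}^\e_t(\psi_\e)$ and $\mathcal{E}^\e_t(\vartheta_\e)$ is an $\mathbb{F}$-martingale, and then use the independence of $W$ and $N$ to conclude that their product $\tilde{\mathcal{E}}^\e_t(\psi_\e,\vartheta_\e)$ is again an $\mathbb{F}$-martingale of expectation one, from which $Q^\e_t$ being a probability measure follows immediately. Since the statement is taken directly from \cite{BDM} (Lemma 2.3), I would refer to that paper for the full technical details and only sketch the main ingredients here.

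For the Brownian factor, the hypothesis $\psi_\e \in \mathcal{S}_1^m$ gives $\int_0^T |\psi_\e(s)|^2\,ds \leq 2m$ almost surely, so the Novikov condition is trivial:
\[
\ee\Big[\exp\Big(\tfrac{1}{2\e}\int_0^T |\psi_\e(s)|^2\,ds\Big)\Big] \leq e^{m/\e} < \infty.
\]
The classical Girsanov--Novikov theorem then yields that $\bar{\mathcal{E}}^\e_t(\psi_\e)$ is a true $\mathbb{F}$-martingale.

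For the Poisson factor, the point of introducing $\mathcal{A}_{b,n}$ is precisely that $\varphi_\e \in \mathcal{A}_b$ implies the existence of $n$ with $\varphi_\e \in \mathcal{A}_{b,n}$, so $\vartheta_\e = 1/\varphi_\e$ takes values in $[\tfrac{1}{n},n]$ on $[0,T]\times K_n$ and equals $1$ on $[0,T]\times K_n^c$. Consequently both $\log \vartheta_\e$ and $\vartheta_\e - 1$ are bounded and compactly supported in $\mathbb{Z}$, so all the integrals appearing in the definition of $\mathcal{E}^\e_t(\vartheta_\e)$ are well defined. An application of It\^o's formula to the jump SDE
\[
d\mathcal{E}^\e_t(\vartheta_\e) = \mathcal{E}^\e_{t-}(\vartheta_\e)\int_{\mathbb{Z}\times[0,\e^{-1}\varphi_\e]}\big(\vartheta_\e(t,z)-1\big)\widetilde{N}(dt,dz,dr),
\]
together with the uniform bounds on $\vartheta_\e$, shows $\mathcal{E}^\e_t(\vartheta_\e)$ is a true (not just local) $\mathbb{F}$-martingale, essentially by Kunita's exponential-martingale criterion for marked point processes.

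The independence of $W$ and $N$ built into the construction of $(\Omega,\mathcal{F},\mathbb{F},P)$, combined with the two martingale properties just established, gives that $\tilde{\mathcal{E}}^\e_t = \bar{\mathcal{E}}^\e_t \mathcal{E}^\e_t$ is an $\mathbb{F}$-martingale with $\ee[\tilde{\mathcal{E}}^\e_t]=1$; this immediately yields $Q^\e_t(\Omega)=1$, so $Q^\e_t$ is a probability measure on $\mathcal{B}(C([0,T],\rr))\otimes\mathcal{T}(M_{FC}([0,T]\times\mathbb{Z}))$. The one genuinely delicate point is ensuring that the Poisson exponential is a true martingale rather than a strict supermartingale; this is where the structure of $\mathcal{A}_b$ (and in particular the compact support in $\mathbb{Z}$ and the two-sided bounds $[\tfrac{1}{n},n]$) does all the work.
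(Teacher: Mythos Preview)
Your proposal is correct and in fact goes beyond what the paper does: the paper gives no proof at all, simply citing \cite{BDM}, Lemma 2.3, and your sketch of the Novikov argument for the Brownian factor together with the boundedness coming from $\varphi_\e\in\mathcal{A}_b$ for the Poisson factor is exactly the content behind that citation. There is nothing to add.
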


\indent Since $(\sqrt{\e}W(\cdot)+\int_0^{\cdot}\psi_\e(s)ds,\e N^{{\e}^{-1}\varphi_\e})$ under $Q_T^\e$ has the same law as that of $(\sqrt{\e}W,\e N^{\e^{-1}})$ under $P$, by Theorem \ref{LDP Thm solution} it follows that there exists a unique solution to the following controlled stochastic evolution equation, denoted by $\widetilde{X}^\e$:
\begin{eqnarray}\label{LDP Girsanov eq}
\widetilde{X}^\e(t)
&=&
X_0-\kappa\int_0^t\widehat{A}\widetilde{X}^\e(s)ds-\int_0^t\widehat{B}(\widetilde{X}^\e(s),\widetilde{X}^\e(s))ds
+\int_0^t\widehat{F}(\widetilde{X}^\e(s),s)ds\nonumber\\
& &+\int_0^t\widehat{G}(\widetilde{X}^\e(s),s)\psi_\e(s)ds+\sqrt{\e}\int_0^t\widehat{G}(\widetilde{X}^\e(s),s)dW(s)\nonumber\\
& &+\int_0^t\int_{\mathbb{Z}}\widehat{\sigma}(s,\widetilde{X}^\e(s-),z)\big(\e N^{\e^{-1}\varphi_\e}(dzds)-\nu(dz)ds\big)\nonumber\\
&=&
X_0-\kappa\int_0^t\widehat{A}\widetilde{X}^\e(s)ds-\int_0^t\widehat{B}(\widetilde{X}^\e(s),\widetilde{X}^\e(s))ds
+\int_0^t\widehat{F}(\widetilde{X}^\e(s),s)ds\nonumber\\
& &+\int_0^t\widehat{G}(\widetilde{X}^\e(s),s)\psi_\e(s)ds+\sqrt{\e}\int_0^t\widehat{G}(\widetilde{X}^\e(s),s)dW(s)\nonumber\\
& &+\int_0^t\int_{\mathbb{Z}}\widehat{\sigma}(s,\widetilde{X}^\e(s),z)(\varphi_\e(s,z)-1)\nu(dz)ds\nonumber\\
& &+\e\int_0^t\int_{\mathbb{Z}}\widehat{\sigma}(s,\widetilde{X}^\e(s-),z)\widetilde{N}^{\e^{-1}\varphi_\e}(dzds),
\end{eqnarray}
and we have
\begin{eqnarray}\label{LDP Girsanov rep}
\mathcal{G}^\e\big(\sqrt{\e}W(\cdot)+\int_0^{\cdot}\psi_\e(s)ds,\e N^{{\e}^{-1}\varphi_\e}\big)\stackrel{\mathcal{D}}{=}\widetilde{X}^\e.
\end{eqnarray}
The following estimates will be used later.

\begin{lem}\label{LDP Lemma Estimates 1}
There exists $\e_0>0$ such that

\begin{eqnarray}\label{LDP Estimate 1W}
\sup_{0<\e<\e_0}E\Big[\sup_{t\in[0,T]}\|\widetilde{X}^\e(t)\|_\mathbb{W}^{2}\Big]\leq C<\infty.
\end{eqnarray}

\end{lem}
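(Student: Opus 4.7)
\emph{Galerkin approximation and It\^o expansion.} The plan is to work at the Galerkin level exactly as in the proof of Theorem~\ref{Theorem skeletons}: introduce $\widetilde{X}^{\e,n} \in C([0,T], \mathbb{P}_n\mathbb{V})$ solving the $n$-dimensional truncated version of (\ref{LDP Girsanov eq}) and derive an estimate for $E[\sup_t \|\widetilde{X}^{\e,n}(t)\|_\mathbb{W}^2]$ uniform in $n$ and in $\e<\e_0$. On $\mathbb{P}_n\mathbb{V}$, It\^o's formula for semimartingales with jumps applies to $u \mapsto \|u\|_\mathbb{W}^2 = \sum_{i=1}^n (u,e_i)_\mathbb{W}^2$. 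Its deterministic drift contributions reproduce (\ref{Estimation 2W eq 3})--(\ref{Estimation 2W eq 4}) of the skeleton proof: the cubic $\widehat{B}$-term vanishes by $(\mathbb{P}_n\widehat{B}_n(u,u),u)_\mathbb{W} = 0$, the linear $\widehat{A}$-term is handled by the curl decomposition together with (\ref{curl}), and the terms in $\widehat{F}$, $\widehat{G}\psi_\e$, and $\int \widehat\sigma(\varphi_\e - 1)\nu dz$ are bounded exactly as there. The noise adds three genuinely new contributions: (i) the Brownian quadratic-variation drift $\e\int_0^t \|\mathbb{P}_n\widehat{G}\|_\mathbb{W}^2 ds$, (ii) the Brownian martingale $M^{\e,n,W}_t := 2\sqrt{\e}\int_0^t (\widetilde{X}^{\e,n}, \widehat{G})_\mathbb{W} dW$, and (iii) the jump term $\int_0^t\int_\mathbb{Z}[2\e(\widetilde{X}^{\e,n},\widehat{\sigma})_\mathbb{W} + \e^2\|\widehat{\sigma}\|_\mathbb{W}^2]\, N^{\e^{-1}\varphi_\e}(dz,ds)$, which I will split via the compensator $\e^{-1}\varphi_\e\nu$ into a compensated Poisson martingale $M^{\e,n,N}$ plus the drift $\int_0^t\int_\mathbb{Z}[2(\widetilde{X}^{\e,n},\widehat{\sigma})_\mathbb{W} + \e\|\widehat{\sigma}\|_\mathbb{W}^2]\varphi_\e\nu dz ds$.

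\emph{Control of the three new stochastic terms.} Using (\ref{inverse operator transform}) and (\ref{Condition G}), term (i) is dominated by $C\e\int_0^t(1 + \|\widetilde{X}^{\e,n}\|_\mathbb{W}^2) ds$. For term (ii) I apply the Burkholder--Davis--Gundy inequality and Young's inequality to obtain $E[\sup_{t\leq\tau}|M^{\e,n,W}_t|] \leq \tfrac14 E[\sup_{t\leq\tau}\|\widetilde{X}^{\e,n}(t)\|_\mathbb{W}^2] + C\e\int_0^\tau E[1 + \|\widetilde{X}^{\e,n}\|_\mathbb{W}^2]ds$. For the drift arising from (iii), the bound $\|\widehat{\sigma}\|_\mathbb{W} \leq C(1+\|\widetilde{X}^{\e,n}\|_\mathbb{V})\|\sigma(s,z)\|_{0,\mathbb{V}}$ gives a dominator $C\int_0^t(1 + \|\widetilde{X}^{\e,n}\|_\mathbb{W}^2) \int_\mathbb{Z} \|\sigma\|_{0,\mathbb{V}}(1 + \e\|\sigma\|_{0,\mathbb{V}})\varphi_\e \nu(dz) ds$, which, combined with the $(\varphi_\e-1)$-drift already coming from (\ref{LDP Girsanov eq}), is integrable in time by (\ref{H-1})--(\ref{H-2}). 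For the jump martingale $M^{\e,n,N}$, BDG gives $E[\sup_{t\leq\tau}|M^{\e,n,N}_t|] \leq CE[(\int_0^\tau\int_\mathbb{Z}(2\e|(\widetilde{X}^{\e,n},\widehat{\sigma})_\mathbb{W}| + \e^2\|\widehat{\sigma}\|_\mathbb{W}^2)^2 \e^{-1}\varphi_\e \nu dz ds)^{1/2}]$; expanding the square, using the $\widehat{\sigma}$-growth bound, Cauchy--Schwarz and Young yields a contribution of the form $\tfrac14 E[\sup_{t\leq\tau}\|\widetilde{X}^{\e,n}\|_\mathbb{W}^2] + C\e(1 + E[\sup\|\widetilde{X}^{\e,n}\|_\mathbb{W}^2])(C_{0,2}^m + \int\int\|\sigma\|_{0,\mathbb{V}}^4\varphi_\e\nu ds)$, the quartic $\sigma$-integral being finite by the exponential integrability in Condition~\ref{condition 2}.

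\emph{Gronwall and passage to the limit.} Fixing $\e_0$ small enough that the sum of coefficients on $E[\sup_{t\leq\tau}\|\widetilde{X}^{\e,n}\|_\mathbb{W}^2]$ on the right is strictly less than $1$ and absorbing, I arrive at $E[\sup_{t\leq\tau}\|\widetilde{X}^{\e,n}(t)\|_\mathbb{W}^2] \leq C + \int_0^\tau \Psi_\e(s) E[\sup_{u\leq s}\|\widetilde{X}^{\e,n}\|_\mathbb{W}^2] ds$, where $\Psi_\e(s) = C(1 + |\psi_\e(s)|^2 + \int_\mathbb{Z}\|\sigma(s,z)\|_{0,\mathbb{V}}(|\varphi_\e-1| + \varphi_\e)\nu(dz))$ satisfies $\sup_{\e}\int_0^T \Psi_\e(s)ds \leq C_m$ by Lemma~\ref{H-lemma 1} and $\psi_\e \in \mathcal{S}_1^m$. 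Gronwall yields the estimate uniformly in $n$ and $\e < \e_0$, and the weak-compactness arguments used in Theorem~\ref{Theorem skeletons} transfer the bound from $\widetilde{X}^{\e,n}$ to $\widetilde{X}^\e$. \emph{The main obstacle} is the dual role played by $\e_0$: both the Brownian and the Poisson BDG bounds generate, via Young's inequality, fractions of $E[\sup\|\widetilde{X}^{\e,n}\|_\mathbb{W}^2]$ on the right-hand side whose coefficients depend (through $C_{0,2}^m$) on the constants from Lemma~\ref{H-lemma 1}; $\e_0$ must therefore be chosen after those constants are fixed. The appearance of $\|\widehat{\sigma}\|_\mathbb{W}^4$ in the jump quadratic variation is precisely what forces us to invoke Condition~\ref{condition 2} rather than the growth bound (\ref{Condition S2}) alone.
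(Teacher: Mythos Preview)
Your It\^o expansion and the term-by-term bounds are essentially correct, and working via Galerkin approximations instead of the paper's direct argument (with the stopping time $\tau_M=\inf\{t:\|\widetilde X^\e(t)\|_\mathbb{W}\ge M\}\wedge T$) is a legitimate alternative. The genuine gap is the \emph{order} in which you apply Gronwall and expectation. Your displayed pre-Gronwall inequality
\[
E\Big[\sup_{t\le\tau}\|\widetilde X^{\e,n}(t)\|_\mathbb{W}^2\Big]\ \le\ C+\int_0^\tau \Psi_\e(s)\,E\Big[\sup_{u\le s}\|\widetilde X^{\e,n}\|_\mathbb{W}^2\Big]\,ds
\]
cannot be derived: $\Psi_\e(s)=C\big(1+|\psi_\e(s)|^2+\int_\mathbb{Z}\|\sigma(s,z)\|_{0,\mathbb{V}}(|\varphi_\e-1|+\varphi_\e)\nu(dz)\big)$ is \emph{random}, since $\psi_\e\in\mathcal S_1^m$ and $\varphi_\e\in\mathcal S_2^m$ are predictable stochastic controls, so it cannot sit outside $E[\cdot]$. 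If instead you bound $E[\int_0^T\Psi_\e(s)\|X(s)\|_\mathbb{W}^2\,ds]\le C_m\,E[\sup_t\|X(t)\|_\mathbb{W}^2]$ using the a.s.\ bound $\int_0^T\Psi_\e\,ds\le C_m$, the constant $C_m$ is large and cannot be absorbed; Gronwall in expectation does not close.

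The paper reverses the order. From the pathwise It\^o inequality (before any expectation), it applies Gronwall $\omega$-by-$\omega$ using $\int_0^T\Psi_\e\,ds\le C(1+m+C_{0,1}^m)$ a.s., obtaining
\[
\sup_{t\le T\wedge\tau_M}\|\widetilde X^\e(t)\|_\mathbb{W}^2\ \le\ C_0\Big(\|X_0\|_\mathbb{W}^2+C\cdot C_{0,1}^m+\sup_t I_2(t)+\sup_t I_5(t)+\sup_t I_6(t)\Big),\qquad C_0=e^{C(1+m+C_{0,1}^m)},
\]
and \emph{only then} takes expectation and invokes BDG on the martingale suprema $I_2,I_5$ (and direct expectation on $I_6$). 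Those bounds produce terms of the form $C_0\cdot C\,\e^{\text{power}}\,E[\sup_t\|\widetilde X^\e\|_\mathbb{W}^2]$, and $\e_0$ is chosen (after $C_0$ is fixed, hence depending on $m$) so that this coefficient is $<\tfrac12$. Your scheme ``BDG first, absorb $\tfrac14 E[\sup]$, then Gronwall in expectation'' must be replaced by ``pathwise Gronwall first, then expectation and BDG, then absorb''.

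A secondary point: lumping $\e^2\|\widehat\sigma\|_\mathbb{W}^2$ into the compensated jump martingale $M^{\e,n,N}$ is what forces the $\|\sigma\|_{0,\mathbb{V}}^4$ term after BDG. The paper keeps $I_6=\e^2\int_0^t\int_\mathbb{Z}\|\widehat\sigma\|_\mathbb{W}^2\,N^{\e^{-1}\varphi_\e}(dzds)$ separate (not compensated) and bounds $E[I_6]$ directly via the compensator $\e^{-1}\varphi_\e\nu$, needing only $\|\sigma\|_{0,\mathbb{V}}^2$ and (\ref{H-1}); Condition~\ref{condition 2} is not used in this lemma. This is not fatal to your argument, just heavier than necessary.
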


\begin{proof}
Define
$$\tau_M=\inf\{t\geq0:\|\widetilde{X}^{\e}(t)\|_\mathbb{W}\geq M\}\wedge T.$$
\indent Multiplying $\lambda_i$ at both sides of the equation (\ref{LDP Girsanov eq}), we can use (\ref{Basis}) to obtain
\begin{eqnarray}\label{LDP Estimate eq 1}
& &d\big(\widetilde{X}^{\e}(s),e_i\big)_\mathbb{W}
+\kappa\big(\widehat{A}\widetilde{X}^{\e}(s),e_i\big)_{\mathbb{W}}ds\nonumber\\
&=&
-\big(\widehat{B}(\widetilde{X}^{\e}(s),\widetilde{X}^{\e}(s)),e_i\big)_{\mathbb{W}}ds
+\big(\widehat{F}(\widetilde{X}^{\e}(s),s),e_i\big)_{\mathbb{W}}ds\nonumber\\
& &
+\sqrt{\e}\big(\widehat{G}(\widetilde{X}^{\e}(s),s),e_i\big)_{\mathbb{W}}dW(s)
+\big(\widehat{G}(\widetilde{X}^{\e}(s),s)\psi_\e(s),e_i\big)_{\mathbb{W}}ds\nonumber\\
& &
+\big(\int_{\mathbb{Z}}\widehat{\sigma}(s,\widetilde{X}^{\e}(s),z)(\varphi_\e(s,z)-1)\nu(dz),e_i\big)_{\mathbb{W}}ds\nonumber\\
& &
+\e\int_{\mathbb{Z}}\big(\widehat{\sigma}(s,\widetilde{X}^{\e}(s-),z),e_i\big)_{\mathbb{W}}\widetilde{N}^{\e^{-1}\varphi_\e}(dzds),
\end{eqnarray}
for any $i\in\mathbb{N}$.\\
\indent Applying $It\hat{o}$ formula to $\big(\widetilde{X}^{\e}(s),e_i\big)_\mathbb{W}^2$ and then summing over $i$ from $1$ to $\infty$ yields
\begin{eqnarray}\label{LDP Estimate eq 2}
\|\widetilde{X}^{\e}(t)\|_\mathbb{W}^2
&=&
\|X_0\|_\mathbb{W}^2-2\kappa\int_0^t\big(\widehat{A}\widetilde{X}^{\e}(s),\widetilde{X}^{\e}(s)\big)_{\mathbb{W}}ds\nonumber\\
& &
-2\int_0^t\big(\widehat{B}(\widetilde{X}^{\e}(s),\widetilde{X}^{\e}(s)),\widetilde{X}^{\e}(s)\big)_{\mathbb{W}}ds
+2\int_0^t\big(\widehat{F}(\widetilde{X}^{\e}(s),s),\widetilde{X}^{\e}(s)\big)_{\mathbb{W}}ds\nonumber\\
& &
+2\sqrt{\e}\int_0^t\big(\widehat{G}(\widetilde{X}^{\e}(s),s),\widetilde{X}^{\e}(s)\big)_{\mathbb{W}}dW(s)
+\e\int_0^t\|\widehat{G}(\widetilde{X}^{\e}(s),s)\|_{\mathbb{W}}^2ds\nonumber\\
& &
+2\int_0^t\big(\widehat{G}(\widetilde{X}^{\e}(s),s)\psi_\e(s),\widetilde{X}^{\e}(s)\big)_{\mathbb{W}}ds\nonumber\\
& &
+2\int_0^t\big(\int_{\mathbb{Z}}
\widehat{\sigma}(s,\widetilde{X}^{\e}(s),z)(\varphi_\e(s,z)-1)\nu(dz),\widetilde{X}^{\e}(s)\big)_{\mathbb{W}}ds\nonumber\\
& &
+2\e\int_0^t\int_{\mathbb{Z}}\big(
\widehat{\sigma}(s,\widetilde{X}^{\e}(s-),z),\widetilde{X}^{\e}(s-)\big)_{\mathbb{W}}\widetilde{N}^{\e^{-1}\varphi_\e}(dzds)\nonumber\\
& &
+\int_0^t\int_{\mathbb{Z}}\|\e\widehat{\sigma}(s,\widetilde{X}^{\e}(s-),z)\|_{\mathbb{W}}^2N^{\e^{-1}\varphi_\e}(dzds).
\end{eqnarray}
By a simple calculus, refer to (\ref{Estimation 2W eq 4}), we get
\begin{eqnarray}\label{LDP Estimate eq 3}
& &\|\widetilde{X}^{\e}(t)\|_\mathbb{W}^2
+\frac{2\kappa}{\alpha}\int_0^t\|\widetilde{X}^{\e}(s)\|_\mathbb{W}^2ds\nonumber\\
&=&
\|X_0\|_\mathbb{W}^2+\frac{2\kappa}{\alpha}\int_0^t
\Big(curl\big(\widetilde{X}^{\e}(s)\big),curl\big(\widetilde{X}^{\e}(s)-\alpha\Delta\widetilde{X}^{\e}(s)\big)\Big)ds\nonumber\\
& &
+2\int_0^t\big(\widehat{F}(\widetilde{X}^{\e}(s),s),\widetilde{X}^{\e}(s)\big)_{\mathbb{W}}ds
+2\sqrt{\e}\int_0^t\big(\widehat{G}(\widetilde{X}^{\e}(s),s),\widetilde{X}^{\e}(s)\big)_{\mathbb{W}}dW(s)\nonumber\\
& &
+\e\int_0^t\|\widehat{G}(\widetilde{X}^{\e}(s),s)\|_{\mathbb{W}}^2ds
+2\int_0^t\big(\widehat{G}(\widetilde{X}^{\e}(s),s)\psi_\e(s),\widetilde{X}^{\e}(s)\big)_{\mathbb{W}}ds\nonumber\\
& &
+2\int_0^t\big(\int_{\mathbb{Z}}
\widehat{\sigma}(s,\widetilde{X}^{\e}(s),z)(\varphi_\e(s,z)-1)\nu(dz),\widetilde{X}^{\e}(s)\big)_{\mathbb{W}}ds\nonumber\\
& &
+2\e\int_0^t\int_{\mathbb{Z}}\big(
\widehat{\sigma}(s,\widetilde{X}^{\e}(s-),z),\widetilde{X}^{\e}(s-)\big)_{\mathbb{W}}\widetilde{N}^{\e^{-1}\varphi_\e}(dzds)\nonumber\\
& &
+\e^2\int_0^t\int_{\mathbb{Z}}\|\widehat{\sigma}(s,\widetilde{X}^{\e}(s-),z)\|_{\mathbb{W}}^2N^{\e^{-1}\varphi_\e}(dzds)\nonumber\\
&:=&
\|X_0\|_\mathbb{W}^2+I_1(t)+I_2(t)+I_3(t)+I_4(t)+I_5(t)+I_6(t),
\end{eqnarray}
where
\begin{eqnarray*}
I_1(t)
&:=&
\frac{2\kappa}{\alpha}\int_0^t
\Big(curl\big(\widetilde{X}^{\e}(s)\big),curl\big(\widetilde{X}^{\e}(s)-\alpha\Delta\widetilde{X}^{\e}(s)\big)\Big)ds\nonumber\\
& &
+2\int_0^{t}\big(\widehat{F}(\widetilde{X}^{\e}(s),s),\widetilde{X}^{\e}(s)\big)_{\mathbb{W}}ds
    +\e\int_0^{t}\|\widehat{G}(\widetilde{X}^{\e}(s),s)\|_{\mathbb{W}}^2ds,\nonumber\\
I_2(t)
&:=&
2\sqrt{\e}\int_0^t\big(\widehat{G}(\widetilde{X}^{\e}(s),s),\widetilde{X}^{\e}(s)\big)_{\mathbb{W}}dW(s),\nonumber\\
I_3(t)
&:=&
2\int_0^{t}\big(\widehat{G}(\widetilde{X}^{\e}(s),s)\psi_\e(s),\widetilde{X}^{\e}(s)\big)_{\mathbb{W}}ds,\nonumber\\
I_4(t)
&:=&
2\int_0^{t}\big(\int_{\mathbb{Z}}
\widehat{\sigma}(s,\widetilde{X}^{\e}(s),z)(\varphi_\e(s,z)-1)\nu(dz),\widetilde{X}^{\e}(s)\big)_{\mathbb{W}}ds,\nonumber\\
I_5(t)
&:=&
2\e\int_0^t\int_{\mathbb{Z}}\big(
    \widehat{\sigma}(s,\widetilde{X}^{\e}(s-),z),\widetilde{X}^{\e}(s-)\big)_{\mathbb{W}}\widetilde{N}^{\e^{-1}\varphi_\e}(dzds),\nonumber\\
I_6(t)
&:=&
{\e}^2\int_0^{t}\int_{\mathbb{Z}}
    \|\widehat{\sigma}(s,\widetilde{X}^{\e}(s-),z)\|_{\mathbb{W}}^2N^{\e^{-1}\varphi_\e}(dzds).
\end{eqnarray*}

\noindent By Condition \ref{condition 1} and (\ref{inverse operator transform}), (\ref{curl}), we have

\begin{eqnarray}\label{LDP Estimate eq 4}
I_1(t)
&\leq&
C \int_0^{t}\|\widetilde{X}^{\e}(s)\|_\mathbb{W}\|\widetilde{X}^{\e}(s)\|_\mathbb{V}ds
+C \int_0^{t}\|\widetilde{X}^{\e}(s)\|_\mathbb{W}\|F(\widetilde{X}^{\e}(s),s)\|_\mathbb{V}ds
+C \int_0^{t}\|G(\widetilde{X}^{\e}(s),s)\|_\mathbb{V}^2ds\nonumber\\
&\leq&
C\int_0^t \|\widetilde{X}^{\e}(s)\|_\mathbb{W}^2ds.
\end{eqnarray}
\noindent By B-D-G and Young's inequalities and Lemma \ref{H-lemma 1}, we get
\begin{eqnarray}\label{LDP Estimate eq 5}
E\Big[\sup_{t\in[0,T\wedge\tau_M]}I_2(t)\Big]
&\leq&
C\sqrt{\e}E\Big[\int_0^{T\wedge\tau_M}
        \big(\widehat{G}(\widetilde{X}^{\e}(s),s),\widetilde{X}^{\e}(s)\big)_{\mathbb{W}}^2ds\Big]^{\frac{1}{2}}\nonumber\\
&\leq&
C\sqrt{\e}E\Big[\Big(\sup_{t\in[0,T\wedge\tau_M]}\|\widetilde{X}^{\e}(t)\|_\mathbb{W}^2\Big)^{\frac{1}{2}}\Big(\int_0^{T\wedge\tau_M}
        \|\widetilde{X}^{\e}(s)\|_\mathbb{W}^2ds\Big)^{\frac{1}{2}}\Big]\nonumber\\
&\leq&
C\sqrt{\e}E\Big[\sup_{t\in[0,T\wedge\tau_M]}\|\widetilde{X}^{\e}(t)\|_\mathbb{W}^2\Big]
+C\sqrt{\e}\int_0^T E\Big[\sup_{s\in[0,t\wedge\tau_M]}\|\widetilde{X}^{\e}(s)\|_\mathbb{W}^2\Big]dt\nonumber\\
&\leq&
C\sqrt{\e}E\Big[\sup_{t\in[0,T\wedge\tau_M]}\|\widetilde{X}^{\e}(t)\|_\mathbb{W}^2\Big],
\end{eqnarray}
and
\begin{eqnarray}\label{LDP Estimate eq 6}
E\Big[\sup_{t\in[0,T\wedge\tau_M]}I_5(t)\Big]
&\leq&
2\e E\Big[\Big(\int_0^{T\wedge\tau_M}
\int_{\mathbb{Z}}\big(\widehat{\sigma}(s,\widetilde{X}^{\e}(s-),z),
\widetilde{X}^{\e}(s-)\big)_{\mathbb{W}}^2N^{\e^{-1}\varphi_\e}(dzds)\Big)^\frac{1}{2}\Big]\nonumber\\
&\leq&
2\e E\Big[\Big(\sup_{s\in[0,T\wedge\tau_M]}\|\widetilde{X}^{\e}(s)\|_\mathbb{W}^2\Big)^\frac{1}{2}
\Big(\int_0^T\int_{\mathbb{Z}}\|\widehat{\sigma}(s,\widetilde{X}^{\e}(s-),z)\|_{\mathbb{W}}^2N^{\e^{-1}\varphi_\e}(dzds)\Big)^\frac{1}{2}\Big]\nonumber\\
&\leq&
{\e}^{\frac{2}{3}}E\Big[\sup_{t\in[0,T\wedge\tau_M]}\|\widetilde{X}^{\e}(t)\|_\mathbb{W}^2\Big]\nonumber\\
&  &
\qquad+{\e}^{\frac{1}{3}}CE\Big[\Big(\int_0^{T\wedge\tau_M}\int_{\mathbb{Z}}
    \|{\sigma}(s,\widetilde{X}^{\e}(s),z)\|_{\mathbb{V}}^2\varphi_\e(s,z)\nu(dz)ds\Big)\Big]\nonumber\\
&\leq&
{\e}^{\frac{2}{3}}E\Big[\sup_{t\in[0,T\wedge\tau_M]}\|\widetilde{X}^{\e}(t)\|_\mathbb{W}^2\Big]+{\e}^{\frac{1}{3}}C\cdot C_{0,2}^{m}E\Big[1+\sup_{t\in[0,t\wedge\tau_M]}\|\widetilde{X}^{\e}(t)\|_\mathbb{W}^2\Big].
\end{eqnarray}
By Condition \ref{condition 1} and Lemma \ref{H-lemma 1}, we get
\begin{eqnarray}\label{LDP Estimate eq 7}
I_3(t)
&\leq&
C \int_0^{t}\|\widetilde{X}^{\e}(s)\|_\mathbb{W}^2|\psi_\e(s)|ds
\leq
C\int_0^t \|\widetilde{X}^{\e}(s)\|_\mathbb{W}^2\big(1+|\psi_\e(s)|^2\big)ds,
\end{eqnarray}
\begin{eqnarray}\label{LDP Estimate eq 8}
I_4(t)
&\leq&
C \int_0^{t}\int_{\mathbb{Z}}\|\widetilde{X}^{\e}(s)\|_\mathbb{W}
\|\sigma(s,\widetilde{X}^{\e}(s),z)\|_{\mathbb{V}}|\varphi_\e(s,z)-1|\nu(dz)ds\nonumber\\
&\leq&
C \int_0^{t}\int_{\mathbb{Z}}\big(1+\|\widetilde{X}^{\e}(s)\|_\mathbb{W}^2\big)
\|\sigma(s,z)\|_{0,\mathbb{V}}|\varphi_\e(s,z)-1|\nu(dz)ds\nonumber\\
&\leq&
C\cdot C_{0,1}^m+C\int_0^t \|\widetilde{X}^{\e}(s)\|_\mathbb{W}^2
\Big(\int_{\mathbb{Z}}\|\sigma(s,z)\|_{0,\mathbb{V}}|\varphi_\e(s,z)-1|\nu(dz)\Big)ds,
\end{eqnarray}
and
\begin{eqnarray}\label{LDP Estimate eq 9}
E\Big[\sup_{t\in[0,T\wedge\tau_M]}I_6(t)\Big]
&\leq&
{\e}^2E\Big[\int_0^{T\wedge\tau_M}\int_{\mathbb{Z}}
    \|\widehat{\sigma}(s,\widetilde{X}^{\e}(s-),z)\|_{\mathbb{W}}^2N^{\e^{-1}\varphi_\e}(dzds)\Big]\nonumber\\
&\leq&
\e C E\Big[\int_0^{T\wedge\tau_M}\int_{\mathbb{Z}}
\|\sigma(s,z)\|_{0,\mathbb{V}}^2\big(1+\|\widetilde{X}^{\e}(s)\|_\mathbb{V}^2\big)\varphi_\e(s,z)\nu(dz)ds\Big]\nonumber\\
&\leq&
\e C\cdot C_{0,2}^{m}\Big(1+E\Big[\sup_{t\in[0,T\wedge\tau_M]}\|\widetilde{X}^{\e}(t)\|_\mathbb{V}^2\Big]\Big).
\end{eqnarray}
Combining the estimates (\ref{LDP Estimate eq 4}), (\ref{LDP Estimate eq 7}) and (\ref{LDP Estimate eq 8}), we have
\begin{eqnarray}\label{LDP Estimate eq 10}
& &\|\widetilde{X}^{\e}(t)\|_\mathbb{W}^2
+\frac{2\kappa}{\alpha}\int_0^t\|\widetilde{X}^{\e}(s)\|_\mathbb{W}^2ds\nonumber\\
&\leq&
\Big(\|X_0\|_\mathbb{W}^2+C\cdot C_{0,1}^m+\sup_{s\in[0,t]}I_2(s)+\sup_{s\in[0,t]}I_5(s)+\sup_{s\in[0,t]}I_6(s)\Big)\nonumber\\
& &
+C\int_0^t\|\widetilde{X}^{\e}(s)\|_\mathbb{W}^2
\big(1+|\psi_\e(s)|^2+\int_{\mathbb{Z}}\|\sigma(s,z)\|_{0,\mathbb{V}}|\varphi_\e(s,z)-1|\nu(dz)\big)ds.
\end{eqnarray}
By Lemma \ref{H-lemma 1} and Gronwall's inequality, we get
\begin{eqnarray*}
\|\widetilde{X}^{\e}(t)\|_\mathbb{W}^2
&\leq&
\Big(\|X_0\|_\mathbb{W}^2+C\cdot C_{0,1}^m+\sup_{s\in[0,t]}I_2(s)+\sup_{s\in[0,t]}I_5(s)+\sup_{s\in[0,t]}I_6(s)\Big)
e^{C\big(1+m+C_{0,1}^m\big)}.
\end{eqnarray*}
Set $C_0=e^{C\big(1+m+C_{0,1}^m\big)}$. By (\ref{LDP Estimate eq 5}), (\ref{LDP Estimate eq 6}) and (\ref{LDP Estimate eq 9}), we have
\begin{eqnarray*}
\Big(1-C_0\big(\e C\cdot C_{0,2}^m+{\e}^{\frac{1}{3}}C\cdot C_{0,2}^m+C\sqrt{\e}+{\e}^{\frac{2}{3}}\big)\Big)
E\Big[\sup_{t\in[0,T\wedge\tau_M]}\|\widetilde{X}^{\e}(t)\|_\mathbb{W}^2\Big]
&\leq&
C_0\Big(\|X_0\|_\mathbb{W}^2+C\cdot C_{0,1}^m+\e C\cdot C_{0,2}^m\big).
\end{eqnarray*}
Since $C_0,C,C_{0,1}^m,C_{0,2}^m$ are constant independent of $\e$, we can select $\e_0$ small enough, such that
\begin{eqnarray*}
C_0\big(\e C\cdot C_{0,2}^m+{\e}^{\frac{1}{3}}C\cdot C_{0,2}^m+C\sqrt{\e}+{\e}^{\frac{2}{3}}\big)<\frac{1}{2},\ \ \forall\e\in(0,\e_0),
\end{eqnarray*}
then letting $M\rightarrow\infty$, we get (\ref{LDP Estimate 1W}).

\end{proof}

\indent By Proposition 3.1 in \cite{RZ}, there exists a unique solution $\widetilde{Y}^\e(t),t\geq0$ to the followig equation:
\begin{eqnarray}\label{LDP Y eq}
d\widetilde{Y}^\e(t)
&=&
-\kappa\widehat{A}\widetilde{Y}^\e(t)dt+\sqrt{\e}\widehat{G}(\widetilde{X}^\e(t),t)dW(t)\nonumber\\
& &
+\e\int_{\mathbb{Z}}\widehat{\sigma}(t,\widetilde{X}^\e(t),z){\widetilde{N}}^{\e^{-1}\varphi_\e}(dzdt)
\end{eqnarray}
with initial value $\widetilde{Y}^\e(0)=0$. Moreover, $\widetilde{Y}^\e\in D([0,T],\mathbb{V})\cap L^2([0,T],\mathbb{W})$, P-a.s. and we have the following estimates.
\begin{lem}\label{LDP Lemma Estimates 3}
There exists $C>0$ such that
\begin{eqnarray}\label{LDP Estimate 3W}
E\Big[\sup_{t\in[0,T]}\|\widetilde{Y}^{\e}(t)\|_\mathbb{W}^2\Big]\leq\e C.
\end{eqnarray}
\end{lem}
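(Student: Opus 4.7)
The plan is to mimic the proof of Lemma \ref{LDP Lemma Estimates 1}, exploiting two simplifications: the equation \eqref{LDP Y eq} is linear in $\widetilde Y^\e$ (there is no $\widehat B$ term), and all noise coefficients are evaluated at $\widetilde X^\e$, whose $\mathbb W$-norm is already controlled by Lemma \ref{LDP Lemma Estimates 1}. The prefactors $\sqrt{\e}$ and $\e$ in front of the stochastic integrals then propagate through the It\^o expansion to produce the claimed $O(\e)$ bound.

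Introduce the localizing stopping time $\tau_M=\inf\{t\ge 0:\|\widetilde Y^\e(t)\|_\mathbb W\ge M\}\wedge T$. Expand $(\widetilde Y^\e,e_i)_\mathbb W=\lambda_i(\widetilde Y^\e,e_i)_\mathbb V$ via \eqref{Basis} and apply It\^o's formula to $(\widetilde Y^\e,e_i)^2_\mathbb W$, summing over $i$, exactly as in the derivation of \eqref{LDP Estimate eq 2}. The curl identity used in \eqref{LDP Estimate eq 3} combined with $|\mathrm{curl}\,v|^2\le\tfrac{2}{\alpha}\|v\|_\mathbb V^2$ from \eqref{curl} yields, for $t\in[0,T\wedge\tau_M]$,
\[
\|\widetilde Y^\e(t)\|_\mathbb W^2+\tfrac{2\kappa}{\alpha}\int_0^t\|\widetilde Y^\e(s)\|_\mathbb W^2\,ds\le C\int_0^t\|\widetilde Y^\e(s)\|_\mathbb W^2\,ds+J_2(t)+J_3(t)+J_5(t)+J_6(t),
\]
where $J_2(t)=2\sqrt{\e}\int_0^t(\widehat G(\widetilde X^\e,s),\widetilde Y^\e(s))_\mathbb W\,dW(s)$, $J_3(t)=\e\int_0^t\|\widehat G(\widetilde X^\e,s)\|_\mathbb W^2\,ds$, $J_5(t)=2\e\int_0^t\int_\mathbb Z(\widehat\sigma(s,\widetilde X^\e,z),\widetilde Y^\e(s-))_\mathbb W\,\widetilde N^{\e^{-1}\varphi_\e}(dz\,ds)$, and $J_6(t)=\e^2\int_0^t\int_\mathbb Z\|\widehat\sigma(s,\widetilde X^\e,z)\|_\mathbb W^2\,N^{\e^{-1}\varphi_\e}(dz\,ds)$.

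Using \eqref{inverse operator transform} and Condition \ref{condition 1}, the coefficients satisfy $\|\widehat G(\widetilde X^\e,s)\|_\mathbb W\le C\|\widetilde X^\e(s)\|_\mathbb V$ and $\|\widehat\sigma(s,\widetilde X^\e,z)\|_\mathbb W\le C\|\sigma(s,z)\|_{0,\mathbb V}(1+\|\widetilde X^\e(s)\|_\mathbb V)$. Invoking \eqref{LDP Estimate 1W} and Lemma \ref{H-lemma 1}, one obtains $E[J_3(T)]\le\e C$ directly, and $E[J_6(T\wedge\tau_M)]\le\e C$ after replacing $N^{\e^{-1}\varphi_\e}$ by its compensator $\e^{-1}\varphi_\e\,\nu(dz)ds$ (which turns the $\e^2$ prefactor into $\e$) and bounding the resulting integrand by $C_{0,2}^m$. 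BDG applied to the martingales $J_2$ and $J_5$, followed by Young's inequality to split off $\sup_t\|\widetilde Y^\e(t)\|_\mathbb W$ from the square-root, yields
\[
E\Big[\sup_{t\in[0,T\wedge\tau_M]}\big(|J_2(t)|+|J_5(t)|\big)\Big]\le \tfrac12\,E\Big[\sup_{t\in[0,T\wedge\tau_M]}\|\widetilde Y^\e(t)\|_\mathbb W^2\Big]+\e C.
\]

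After absorbing the half-multiple of $E[\sup\|\widetilde Y^\e\|_\mathbb W^2]$ to the left-hand side and applying Gronwall's inequality to the resulting inequality, one arrives at $E[\sup_{t\in[0,T\wedge\tau_M]}\|\widetilde Y^\e(t)\|_\mathbb W^2]\le\e C$ with $C$ independent of $M$. Passing $M\uparrow\infty$ by monotone convergence then gives \eqref{LDP Estimate 3W}. The main technical point is the careful bookkeeping of the $\e$-powers in the BDG estimate for $J_5$ and in the Poisson jump-sum $J_6$; the absence of $\widehat B$ means there is no nonlinear feedback between the $\widetilde Y^\e$ and $\widetilde X^\e$ estimates, and Gronwall applies directly once each noise term is shown to contribute $O(\e)$ in expectation.
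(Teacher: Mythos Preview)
Your proposal is correct and follows essentially the same route as the paper: the same localizing stopping time, the same It\^o expansion in $\mathbb W$-coordinates via \eqref{Basis}, the same curl identity to handle the $\widehat A$-term, BDG plus Young's inequality for the two martingale pieces, the compensator trick for the Poisson jump-sum, and Gronwall followed by $M\to\infty$. The paper's presentation labels the terms as $I_1,\dots,I_5$ after taking $\sup$ and expectation, whereas you keep the pointwise-in-$t$ inequality first and absorb the curl contribution into the running integral; this is only a cosmetic reordering.
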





\begin{proof}
Define
$$\tau_M=\inf\{t\geq0,\|\widetilde{Y}^{\e}(t)\|_\mathbb{W}\geq M\}\wedge T.$$
\indent By (\ref{LDP Y eq}), we have
\begin{eqnarray}\label{LDP Estimate 4W eq 1}
d\big(\widetilde{Y}^{\e}(t),e_i\big)_{\mathbb{V}}
&=&
-\kappa\big(\widehat{A}\widetilde{Y}^{\e}(t),e_i\big)_{\mathbb{V}}dt
+\sqrt{\e}\big(\widehat{G}(\widetilde{X}^{\e}(t),t),e_i\big)_{\mathbb{V}}dW(t)\nonumber\\
& &
+\e\int_{\mathbb{Z}}\big(\widehat{\sigma}(t,\widetilde{X}^{\e}(t-),z),e_i\big)_{\mathbb{V}}{\widetilde{N}}^{\e^{-1}\varphi_\e}(dzdt),
\end{eqnarray}
for any $i\in\mathbb{N}$.\\
\indent Multiplying both sides of the equation (\ref{LDP Estimate 4W eq 2}) by $\lambda_i$, we can use (\ref{Basis}) to obtain
\begin{eqnarray}\label{LDP Estimate 4W eq 2}
d\big(\widetilde{Y}^{\e}(t),e_i\big)_{\mathbb{W}}
&=&
-\kappa\big(\widehat{A}\widetilde{Y}^{\e}(t),e_i\big)_{\mathbb{W}}dt\nonumber\\
& &
+\sqrt{\e}\big(\widehat{G}(\widetilde{X}^{\e}(t),t),e_i\big)_{\mathbb{W}}dW(t)\nonumber\\
& &
+\e\int_{\mathbb{Z}}\big(\widehat{\sigma}(t,\widetilde{X}^{\e}(t-),z),e_i\big)_{\mathbb{W}}{\widetilde{N}}^{\e^{-1}\varphi_\e}(dzdt),
\end{eqnarray}
for any $i\in\mathbb{N}$.\\
\indent Applying $It\hat{o}$ formula to $\big(\widetilde{Y}^{\e}(t),e_i\big)_{\mathbb{W}}^2$ and summing over $i$ from $1$ to $\infty$ yields
\begin{eqnarray}\label{LDP Estimate 4W eq 3}
\|\widetilde{Y}^{\e}(t)\|_{\mathbb{W}}^2
&=&
-2\kappa\int_0^t\big(\widehat{A}\widetilde{Y}^{\e}(s),\widetilde{Y}^{\e}(s)\big)_{\mathbb{W}}dt\nonumber\\
& &
+2\sqrt{\e}\int_0^t\big(\widehat{G}(\widetilde{X}^{\e}(s),s),\widetilde{Y}^{\e}(s)\big)_{\mathbb{W}}dW(s)
+\e\int_0^t\|\widehat{G}(\widetilde{X}^{\e}(s),s)\|_{\mathbb{W}}^2ds\nonumber\\
& &
+2\e\int_0^t\int_{\mathbb{Z}}\big(\widehat{\sigma}(s,\widetilde{X}^{\e}(s-),z),
\widetilde{Y}^{\e}(s-)\big)_{\mathbb{W}}{\widetilde{N}}^{\e^{-1}\varphi_\e}(dzds)\nonumber\\
& &
+{\e}^2\int_0^t\int_{\mathbb{Z}}\|\widehat{\sigma}(s,\widetilde{X}^{\e}(s-),z)\|_{\mathbb{W}}^2N^{\e^{-1}\varphi_\e}(dzds).
\end{eqnarray}
Similar to (\ref{Estimation 2W eq 4}) and (\ref{LDP Estimate eq 3}), we get
\begin{eqnarray}\label{LDP Estimate 4W eq 4}
& &\|\widetilde{Y}^{\e}(t)\|_{\mathbb{W}}^2+\frac{2\kappa}{\alpha}\int_0^t\|\widetilde{Y}^{\e}(s)\|_{\mathbb{W}}^2ds\nonumber\\
&=&
\frac{2\kappa}{\alpha}\int_0^t
\Big(curl\big(\widetilde{Y}^{\e}(s)\big),curl\big(\widetilde{Y}^{\e}(s)-\alpha\Delta\widetilde{Y}^{\e}(s)\big)\Big)_{\mathbb{W}}ds\nonumber\\
& &
+2\sqrt{\e}\int_0^t\big(\widehat{G}(\widetilde{X}^{\e}(s),s),\widetilde{Y}^{\e}(s)\big)_{\mathbb{W}}dW(s)
+\e\int_0^t\|\widehat{G}(\widetilde{X}^{\e}(s),s)\|_{\mathbb{W}}^2ds\nonumber\\
& &
+2\e\int_0^t\int_{\mathbb{Z}}\big(\widehat{\sigma}(s,\widetilde{X}^{\e}(s-),z),
\widetilde{Y}^{\e}(s-)\big)_{\mathbb{W}}{\widetilde{N}}^{\e^{-1}\varphi_\e}(dzds)\nonumber\\
& &
+{\e}^2\int_0^t\int_{\mathbb{Z}}\|\widehat{\sigma}(s,\widetilde{X}^{\e}(s-),z)\|_{\mathbb{W}}^2N^{\e^{-1}\varphi_\e}(dzds).
\end{eqnarray}

Taking the sup over $t\leq T\wedge\tau_M$, then taking expectations we get
\begin{eqnarray}\label{LDP Estimate 4W eq 5}
& &E\Big[\sup_{t\in[0,T\wedge\tau_M]}\|\widetilde{Y}^{\e}(t)\|_{\mathbb{W}}^2\Big]
+\frac{2\kappa}{\alpha} E\Big[\int_0^{T\wedge\tau_M}\|\widetilde{Y}^{\e}(s)\|_{\mathbb{W}}^2ds\Big]\nonumber\\
&=&
\frac{2\kappa}{\alpha}E\Big[\int_0^{T\wedge\tau_M}\Big(curl\big(\widetilde{Y}^{n,\e}(s)\big),
curl\big(\widetilde{Y}^{\e}(s)-\alpha\Delta\widetilde{Y}^{\e}(s)\big)\Big)ds\Big]\nonumber\\
& &
+2\sqrt{\e}E\Big[\sup_{t\in[0,T\wedge\tau_M]}\big|\int_0^{t}\big(\widehat{G}(\widetilde{X}^\e(s),s),\widetilde{Y}^\e(s)\big)dW(s)_{\mathbb{W}}\big|\Big]
+\e E\Big[\int_0^{T\wedge\tau_M}\|\widehat{G}(\widetilde{X}^\e(s),s)\|_\mathbb{W}^2ds\Big]\nonumber\\
& &
+2\e E\Big[\sup_{t\in[0,T\wedge\tau_M]}\big|\int_0^t\int_{\mathbb{Z}}\big(\widehat{\sigma}(s,\widetilde{X}^\e(s-),z),
\widetilde{Y}^\e(s-)\big)_{\mathbb{W}}{\widetilde{N}}^{\e^{-1}\varphi_\e}(dzds)\big|\Big]\nonumber\\
& &
+\e^2E\Big[\int_0^{T\wedge\tau_M}\int_{\mathbb{Z}}\|\widehat{\sigma}(s,\widetilde{X}^\e(s-),z)\|_\mathbb{W}^2N^{\e^{-1}\varphi_\e}(dzds)\Big]\nonumber\\
&:=&
I_1+I_2+I_3+I_4+I_5.
\end{eqnarray}

By Condition \ref{condition 1} and Lemma \ref{H-lemma 1}, we get
\begin{eqnarray}\label{LDP Estimate 4W eq 6}
I_3
&\leq&
\e CE\Big[\sup_{s\in[0,T]}\|\widetilde{X}^\e(s)\|_\mathbb{W}^2\Big],
\end{eqnarray}
and
\begin{eqnarray}\label{LDP Estimate 4W eq 7}
I_5
&\leq&
C\e E\Big[\int_0^{T\wedge\tau_M}\int_{\mathbb{Z}}\|{\sigma}(t,z)\|_{0,\mathbb{V}}^2
\big(1+\sup_{s\in[0,t]}\|\widetilde{X}^\e(s)\|_\mathbb{V}^2\big)\varphi_\e(s,z)\nu(dz)dt\Big]\nonumber\\
&\leq&
{\e}C\cdot C_{0,2}^m\Big(1+E\Big[\sup_{s\in[0,T]}\|\widetilde{X}^\e(s)\|_\mathbb{W}^2\Big]\Big).
\end{eqnarray}
\indent By (\ref{curl}), we have
\begin{eqnarray}\label{LDP Estimate 4W eq 6}
I_1
&\leq&
CE\Big[\int_0^{T\wedge\tau_M}\|\widetilde{Y}^{\e}(s)\|_{\mathbb{V}}\|\widetilde{Y}^{\e}(s)\|_{\mathbb{W}}ds\Big]\nonumber\\
&\leq&
C\int_0^TE\Big[\sup_{s\in[0,t\wedge\tau_M]}\|\widetilde{Y}^{\e}(s)\|_{\mathbb{W}}^2\Big]dt.
\end{eqnarray}
\indent By B-D-G inequality and Young inequality and Lemma \ref{H-lemma 1}, we get
\begin{eqnarray}\label{LDP Estimate 4W eq 7}
I_2
&\leq&
2\sqrt{\e}E\Big[\int_0^{T\wedge\tau_M}
\big|\big(\widehat{G}(\widetilde{X}^{\e}(s),s),\widetilde{Y}^{\e}(s)\big)_{\mathbb{W}}\big|^2ds\Big]^{\frac{1}{2}}\nonumber\\
&\leq&
\sqrt{\e}C E\Big[\sup_{t\in[0,T\wedge\tau_M]}\|\widetilde{Y}^{\e}(t)\|_\mathbb{W}
\big(\int_0^{T\wedge\tau_M}\|\widehat{G}(\widetilde{X}^{\e}(s),s)\|_\mathbb{W}^2ds\big)^{\frac{1}{2}}\Big]\nonumber\\
&\leq&
\frac{1}{4} E\Big[\sup_{t\in[0,T\wedge\tau_M]}\|\widetilde{Y}^{n,\e}(t)\|_\mathbb{W}^2\Big]+
\e TC E\Big[\sup_{s\in[0,T\wedge\tau_M]}\|\widetilde{X}^{\e}(s)\|_\mathbb{W}^2\Big],
\end{eqnarray}
and
\begin{eqnarray}\label{LDP Estimate 4W eq 8}
I_4
&\leq&
\e C E\Big[\int_0^{T\wedge\tau_M}\int_{\mathbb{Z}}\big(\widehat{\sigma}(s,\widetilde{X}^\e(s-),z),\widetilde{Y}^\e(s-)\big)_\mathbb{W}^2
N^{\e^{-1}\varphi_\e}(dzds)\big|\Big]^{\frac{1}{2}}\nonumber\\
&\leq&
\e C E\Big[\sup_{t\in[0,T\wedge\tau_M]}\|\widetilde{Y}^\e(t)\|_\mathbb{W}
\big(\int_0^{T\wedge\tau_M}\int_{\mathbb{Z}}\|\widehat{\sigma}(s,\widetilde{X}^\e(s-),z)\|_\mathbb{W}^2N^{\e^{-1}\varphi_\e}(dzds)\big)^{\frac{1}{2}}\Big]\nonumber\\
&\leq&
\frac{1}{4} E\Big[\sup_{t\in[0,T\wedge\tau_M]}\|\widetilde{Y}^\e(t)\|_\mathbb{W}^2\Big]\nonumber\\
& &\qquad+\e C E\Big[\int_0^{T\wedge\tau_M^n}\int_{\mathbb{Z}}\|\sigma(s,z)\|^2_{0,\mathbb{V}}
\big(1+\|\widetilde{X}^\e(s)\|_\mathbb{V}^2\big)\varphi_\e(s,z)\nu(dz)ds\Big]\nonumber\\
&\leq&
\frac{1}{4} E\Big[\sup_{t\in[0,T\wedge\tau_M]}\|\widetilde{Y}^\e(t)\|_\mathbb{V}^2\Big]
+\e C\cdot C_{0,2}^m\Big(1+E\Big[\sup_{t\in[0,T]}\|\widetilde{X}^\e(s)\|_\mathbb{W}^2\Big]\Big).
\end{eqnarray}
Combining above all inequalities, we get
\begin{eqnarray*}
& &\frac{1}{2}E\Big[\sup_{t\in[0,T\wedge\tau_M]}\|\widetilde{Y}^\e(t)\|_\mathbb{W}^2\Big]
+\frac{2\kappa}{\alpha} E\Big[\int_0^{T\wedge\tau_M}\|\widetilde{Y}^\e(t)\|_\mathbb{W}dt\Big]\nonumber\\
&\leq&
\e C\Big(1+E\Big[\sup_{t\in[0,T]}\|\widetilde{X}^\e(t)\|_\mathbb{W}^2\Big]\Big)
+C\int_0^{T}E\Big[\sup_{s\in[0,t\wedge\tau_M]}\|\widetilde{Y}^\e(s)\|_\mathbb{W}^2\Big]dt.
\end{eqnarray*}
By (\ref{LDP Estimate 1W}) and Gronwall's inequality , then letting $M\rightarrow\infty$, we get (\ref{LDP Estimate 3W}).
\end{proof}

\begin{prp}\label{LDP 2}
Fix $m\in\mathbb{N}$, and let $\phi_\e=(\psi_\e,\varphi_\e),\phi=(\psi,\varphi)\in{{\mathcal{S}}}_1^m\times{{\mathcal{S}}}_2^m$ be such that $\phi_\e$ converges in distribution to $\phi$ as $n\rightarrow\infty$. Then
\begin{eqnarray*}
\mathcal{G}^\e(\sqrt{\e}W(\cdot)+\int_0^\cdot \psi_\e(s)ds,\e N^{{\e}^{-1}\varphi_\e})\text{ converges in distribution to }\mathcal{G}^0(\int_0^\cdot \psi(s)ds,\nu_T^\varphi)\ \ \text{in} \ \ D([0,T],\mathbb{V}).
\end{eqnarray*}
\end{prp}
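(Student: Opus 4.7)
The plan is to combine the a priori estimates from Lemmas \ref{LDP Lemma Estimates 1} and \ref{LDP Lemma Estimates 3} with a tightness-plus-uniqueness scheme modelled on Proposition \ref{LDP 1}. By the Skorokhod representation theorem I may assume that $\phi_\e=(\psi_\e,\varphi_\e)\to(\psi,\varphi)=\phi$ almost surely in $S_1^m\times S_2^m$ on an auxiliary probability space. The natural decomposition is $\widetilde{X}^\e=\widetilde{Y}^\e+\widetilde{Z}^\e$, where $\widetilde{Y}^\e$ solves (\ref{LDP Y eq}) and carries the stochastic integrals driven by $W$ and $\widetilde{N}^{\e^{-1}\varphi_\e}$, while $\widetilde{Z}^\e$ satisfies the pathwise equation
\begin{align*}
\widetilde{Z}^\e(t) &= X_0-\kappa\int_0^t\widehat{A}\widetilde{Z}^\e(s)\,ds-\int_0^t\widehat{B}(\widetilde{X}^\e(s),\widetilde{X}^\e(s))\,ds+\int_0^t\widehat{F}(\widetilde{X}^\e(s),s)\,ds \\
&\quad+\int_0^t\widehat{G}(\widetilde{X}^\e(s),s)\psi_\e(s)\,ds+\int_0^t\int_{\mathbb{Z}}\widehat{\sigma}(s,\widetilde{X}^\e(s),z)(\varphi_\e(s,z)-1)\,\nu(dz)\,ds.
\end{align*}
Lemma \ref{LDP Lemma Estimates 3} guarantees that $\widetilde{Y}^\e\to 0$ in $C([0,T],\mathbb{W})$ in probability, so it suffices to identify the limit of $\widetilde{Z}^\e$ in $C([0,T],\mathbb{V})$.

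For tightness I would combine the uniform bound $\sup_\e E[\sup_t\|\widetilde{X}^\e(t)\|_\mathbb{W}^2]\le C$ from Lemma \ref{LDP Lemma Estimates 1} with a fractional Sobolev estimate $\sup_\e E[\|\widetilde{X}^\e\|_{W^{\alpha,2}([0,T],\mathbb{W}^*)}]\le C_\alpha$ for $\alpha\in(0,\tfrac{1}{2})$, obtained term-by-term from the mild formulation exactly as in the deterministic bound (\ref{Estimation 2}), but now with the two stochastic integral contributions controlled by the Burkholder-Davis-Gundy inequality and Condition \ref{condition 1} together with Lemma \ref{H-lemma 1} (the computation behind (4.20) in \cite{ZZ1} transferred to our setting). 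With these bounds in hand, Lemma \ref{H-lemma 4} applied with $\mathbb{K}_0=\mathbb{W}$, $\mathbb{K}=\mathbb{V}$, $\mathbb{K}_1=\mathbb{W}^*$ yields tightness of $\{\widetilde{X}^\e\}$ in $L^2([0,T],\mathbb{V})$; an Ascoli-type argument for $\widetilde{Z}^\e$, whose time derivative is a controlled expression with no martingale piece, upgrades this to tightness of $\widetilde{Z}^\e$ in $C([0,T],\mathbb{V})$.

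To identify the limit, pick any subsequence along which (after a further Skorokhod realization if necessary) $\widetilde{X}^\e\to\widetilde{X}$ almost surely in $L^2([0,T],\mathbb{V})$ and $\widetilde{Z}^\e\to\widetilde{X}$ in $C([0,T],\mathbb{V})$, the two limits coinciding thanks to Lemma \ref{LDP Lemma Estimates 3}. Repeating the arguments of Proposition \ref{LDP 1}, strong $L^2([0,T],\mathbb{V})$ convergence permits passing to the limit in the linear $\widehat{A}$-term, the quadratic $\widehat{B}$-term and the Lipschitz term $\widehat{F}$; the term $\int_0^\cdot\widehat{G}(\widetilde{X}^\e,s)\psi_\e(s)\,ds$ is handled via Condition \ref{condition 1} together with the weak $L^2$ convergence $\psi_\e\to\psi$ and the strong convergence of $\widetilde{X}^\e$; and the Poisson control term is handled by Lemma \ref{H-lemma 3} using Condition \ref{condition 2} and Remark \ref{remark}. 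The limit $\widetilde{X}$ therefore solves (\ref{skeleton equations}) with $q=\phi$, and the uniqueness part of Theorem \ref{Theorem skeletons} forces $\widetilde{X}=\widetilde{X}^\phi=\mathcal{G}^0(\int_0^\cdot\psi(s)\,ds,\nu_T^\varphi)$. Since every subsequence admits a further subsequence converging to the same deterministic limit conditionally on $\phi$, the full family $\widetilde{X}^\e$ converges in distribution to $\widetilde{X}^\phi$ in $D([0,T],\mathbb{V})$, which by (\ref{LDP Girsanov rep}) is the desired conclusion.

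The most delicate point is passing to the limit in the bilinear term $\widehat{B}(\widetilde{X}^\e,\widetilde{X}^\e)$: this requires genuine strong $L^2([0,T],\mathbb{V})$ convergence of $\widetilde{X}^\e$, which is precisely what Lemma \ref{H-lemma 4} is designed to supply, provided one can first absorb the jump-martingale contribution into the fractional Sobolev bound on $\widetilde{X}^\e$. Producing that stochastic fractional Sobolev estimate, and verifying that the $W^{\alpha,2}([0,T],\mathbb{W}^*)$-norm of the jump part of $\widetilde{X}^\e$ is uniformly bounded in $\e$ despite the rescaling factor $\e^{-1}$ in the intensity of $\widetilde{N}^{\e^{-1}\varphi_\e}$, is the main technical hurdle of the proof.
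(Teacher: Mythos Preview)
Your decomposition $\widetilde{X}^\e=\widetilde{Y}^\e+\widetilde{Z}^\e$ and the overall strategy are the same as the paper's, and the identification step via the arguments of Proposition~\ref{LDP 1} is exactly what the paper does. But the paper sidesteps entirely the ``main technical hurdle'' you flag at the end, and it is worth seeing how.

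You propose to obtain tightness of the full stochastic process $\widetilde{X}^\e$ in $L^2([0,T],\mathbb{V})$ via a fractional Sobolev bound on every term of the equation, including the two martingale pieces, and then to invoke Lemma~\ref{H-lemma 4}. The paper does not do this. Instead it applies the Skorokhod representation not merely to $\phi_\e$ but to the pair $\big((\psi_\e,\varphi_\e),\widetilde{Y}^\e\big)$ in the Polish space $\Pi=S^m\times\big(L^2([0,T],\mathbb{W})\cap D([0,T],\mathbb{V})\big)$. Tightness of this family is trivial: $S^m$ is compact, and Lemma~\ref{LDP Lemma Estimates 3} gives $E[\sup_t\|\widetilde{Y}^\e\|_\mathbb{W}^2]\le\e C$, so $\widetilde{Y}^\e\to 0$ in probability. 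After Skorokhod one has copies $(\psi_\e^1,\varphi_\e^1,\widetilde{Y}^\e_1)\to(\psi^1,\varphi^1,0)$ a.s., and along a further subsequence even $\sup_t\|\widetilde{Y}^{\e_k}_1\|_\mathbb{W}\to 0$ pointwise. One then defines $\bar{Z}^\e$ as the \emph{pathwise} solution of the $\widetilde{Z}^\e$--equation with $(\psi_\e^1,\varphi_\e^1,\widetilde{Y}^\e_1)$ in place of $(\psi_\e,\varphi_\e,\widetilde{Y}^\e)$; this is a deterministic equation for each fixed $\omega^1$, solved by the method of Theorem~\ref{Theorem skeletons}, and one checks that $\bar{Z}^\e+\widetilde{Y}^\e_1$ has the same law as $\widetilde{X}^\e$. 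The convergence $\bar{Z}^{\e_k}\to\widehat{X}$ in $C([0,T],\mathbb{V})$ is then a purely deterministic statement, proved $\omega^1$-by-$\omega^1$ exactly as in Proposition~\ref{LDP 1}, with the vanishing perturbation $\widetilde{Y}^{\e_k}_1$ absorbed by the Lipschitz bounds on $\widehat{F},\widehat{G},\widehat{\sigma}$ and Lemma~\ref{Lem-B-01}.

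The gain is that no $W^{\alpha,2}([0,T],\mathbb{W}^*)$ estimate on any stochastic integral is ever needed: the only fractional Sobolev bounds that appear are those for $\bar{Z}^\e$, which has no martingale part. Your route would also go through (the $\e$-prefactor on the jump integral makes its $W^{\alpha,2}$-contribution vanish, so your stated hurdle is less severe than you fear), but it requires an extra layer of stochastic estimates and a second Skorokhod step that the paper avoids. A minor further point: applying Skorokhod to $\phi_\e$ alone, as you first do, loses the joint law with $W$ and $N$; the paper's device of carrying $\widetilde{Y}^\e$ through the Skorokhod step is precisely what makes the pathwise equation for $\bar{Z}^\e$ well-posed on the new space.
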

\begin{proof}

Set $\widetilde{Z}^\e=\widetilde{X}^\e-\widetilde{Y}^\e$, which satisfies
\begin{eqnarray}\label{prop 2 equation 1}
\widetilde{Z}^\e(t)
&=&
X_0-\kappa\int_0^t\widehat{A}\widetilde{Z}^\e(s)ds
-\int_0^t\widehat{B}\big(\widetilde{Z}^\e(s)+\widetilde{Y}^{\e}(s),\widetilde{Z}^\e(s)+\widetilde{Y}^{\e}(s)\big)ds\nonumber\\
& &
+\int_0^t\widehat{F}(\widetilde{Z}^\e(s)+\widetilde{Y}^{\e}(s),s)ds+\int_0^t\widehat{G}(\widetilde{Z}^\e(s)+\widetilde{Y}^{\e}(s),s)\psi_\e(s)ds\nonumber\\
& &
+\int_0^t\int_{\mathbb{Z}}\widehat{\sigma}(s,\widetilde{Z}^\e(s)+\widetilde{Y}^{\e}(s),z)(\varphi_{\e}(s,z)-1)\nu(dz)ds.
\end{eqnarray}
 Set
$$\Pi=\Big(S^m;L^2([0,T],\mathbb{W})\cap D([0,T],\mathbb{V})\Big).$$

\indent Let $\big((\psi,\varphi),0\big)$ be any limit point of the tight family $\big\{\big((\psi_\e,\varphi_\e),\widetilde{Y}^\e\big),\e\in(0,\e_0)\big\}$ in $\Pi$. By Skorokhod representation theorem, there exists a probability space $\big(\Omega^1,\mathcal{F}^1,P^1\big)$ and on this basis, there exist $\Pi$-valued random variables $\big((\psi^1,\varphi^1),0\big)$, $\big\{\big((\psi_\e^1,\varphi_{\e}^1),\widetilde{Y}^{\e}_1\big),\e\in(0,\e_0)\big\}$ such that $\big((\psi_\e^1,\varphi_{\e}^1),\widetilde{Y}^{\e}_1\big)$ \Big(respectively $\big((\psi^1,\varphi^1),0\big)$\Big) has the same law as $\big((\psi_\e,\varphi_\e),\widetilde{Y}^\e\big)$ \Big(respectively $\big((\psi,\varphi),0\big)$\Big) and $\big((\psi_\e^1,\varphi_{\e}^1),\widetilde{Y}^{\e}_1\big)\rightarrow\big((\psi^1,\varphi^1),0\big)$ $P^1$-a.s. in $\Pi$.\\

\indent Since
\begin{eqnarray}\label{prop 2 equation 2}
E^1\Big[\sup_{t\in[0,T]}\|\widetilde{Y}^{\e}_1(t)\|_\mathbb{W}^2\Big]=E\Big[\sup_{t\in[0,T]}\|\widetilde{Y}^{\e}(t)\|_\mathbb{W}^2\Big]\leq\sqrt{\e}C,
\end{eqnarray}
we get\\
(1)$\sup_{t\in[0,T]}\|\widetilde{Y}^{\e}_1(t)\|_\mathbb{W}^2<\infty,\ \ P^1-a.s.$ \\
(2)there exists a sub-sequence $\e_k$ and a subset $\Omega^1_0$ of $\Omega^1$, such that $P^1\big(\Omega^1_0\big)=1$ and $\forall\omega^1\in\Omega^1_0$
\begin{eqnarray}\label{prop 2 equation 3}
\lim_{\e_k\rightarrow0}\sup_{s\in[0,T]}\|\widetilde{Y}^{\e_k}_1(\omega^1,s)\|_\mathbb{W}^2=0.
\end{eqnarray}

\indent Let $\bar{Z}^\e$ be the solution of the following equation,
\begin{eqnarray}\label{prop 2 equation 4}
\bar{Z}^\e(t)
&=&
X_0-\kappa\int_0^t\widehat{A}\bar{Z}^\e(s)ds
-\int_0^t\widehat{B}\big(\bar{Z}^\e(s)+\widetilde{Y}^{\e}_1(s),\bar{Z}^\e(s)+\widetilde{Y}^{\e}_1(s)\big)ds\nonumber\\
& &
+\int_0^t\widehat{F}(\bar{Z}^\e(s)+\widetilde{Y}^{\e}_1(s),s)ds+\int_0^t\widehat{G}(\bar{Z}^\e(s)+\widetilde{Y}^{\e}_1(s),s)\psi_\e^1(s)ds\nonumber\\
& &
+\int_0^t\int_{\mathbb{Z}}\widehat{\sigma}(s,\bar{Z}^\e(s)+\widetilde{Y}^{\e}_1(s),z)(\varphi_{\e}^1(s,z)-1)\nu(dz)ds.
\end{eqnarray}
Keep in mind claim (1), similarly as the proof of Theorem \ref{Theorem skeletons}, we can prove (\ref{prop 2 equation 4}) has a unique solution.\\
\indent Comparing (\ref{prop 2 equation 1}) and (\ref{prop 2 equation 4}), and $\big((\psi_\e^1,\varphi_{\e}^1),\widetilde{Y}^{\e}_1\big)$ has the same law as $\big((\psi_\e,\varphi_\e),\widetilde{Y}^\e\big)$, we can conclude that $\big(\bar{Z}^\e,\widetilde{Y}^{\e}_1\big)$ has the same law as $\big(\widetilde{Z}^\e,\widetilde{Y}^\e\big)$, hence
\begin{eqnarray}\label{prop 2 equation 5}
\bar{Z}^\e+\widetilde{Y}^{\e}_1\stackrel{\mathcal{D}}{=}\widetilde{Z}^\e+\widetilde{Y}^\e.
\end{eqnarray}

By (\ref{prop 2 equation 3}) and a similar argument as in the proof of Proposition \ref{LDP 1}, we can show that
\begin{eqnarray}\label{prop 2 equation 6}
\lim_{\e_k\rightarrow0}\Big[\sup_{t\in[0,T]}\|\bar{Z}^{\e_k}(\omega^1,t)-\widehat{X}(\omega^1,t)\|_\mathbb{V}^2\Big]=0,
\end{eqnarray}
where
\begin{eqnarray*}
\widehat{X}(t)
&=&
X_0-\kappa\int_0^t\widehat{A}\widehat{X}(s)ds
-\int_0^t\widehat{B}\big(\widehat{X}(s),\widehat{X}(s)\big)ds\nonumber\\
& &
+\int_0^t\widehat{F}(\widehat{X}(s),s)ds+\int_0^t\widehat{G}(\widehat{X}(s),s)\psi^1(s)ds\nonumber\\
& &
+\int_0^t\int_{\mathbb{Z}}\widehat{\sigma}(s,\widehat{X}(s),z)(\varphi^1(s,z)-1)\nu(dz)ds.
\end{eqnarray*}
Actually $\widehat{X}$ has the same law of $\mathcal{G}^0(\int_0^\cdot \psi(s)ds,\nu_T^\varphi)$.  
Using (\ref{prop 2 equation 3}) and (\ref{prop 2 equation 6}) to obtain
\begin{eqnarray*}
\lim_{\e_k\rightarrow0}\Big[\sup_{t\in[0,T]}\|\bar{Z}^{\e_k}(t)+\widetilde{Y}^{\e_k}_1(t)-\widehat{X}(t)\|_\mathbb{V}^2\Big]=0.
\end{eqnarray*}
Combining (\ref{prop 2 equation 5}), we yield
\begin{eqnarray*}
\widetilde{X}^\e(\cdot) \text{ converges in distribution to }\mathcal{G}^0(\int_0^\cdot \psi(s)ds,\nu_T^\varphi).
\end{eqnarray*}
The proof is completed.
\end{proof}

\def\refname{ References}

\end{document}